\documentclass[11pt,reqno,oneside]{amsart}

\usepackage{graphicx}
\usepackage{amssymb}
\usepackage{epstopdf}
\usepackage[a4paper, total={6in, 9in}]{geometry}
\usepackage{array} 
\usepackage{overpic} 
\usepackage{tabularx}
\usepackage{caption}
\usepackage{subcaption}
\usepackage{hyperref}
\hypersetup{hypertex=true,colorlinks=true,linkcolor=blue,anchorcolor=blue,citecolor=blue}
\usepackage{amsmath,amsfonts,amsthm,mathrsfs,amssymb,cite}
\usepackage[usenames]{color}
\usepackage{xcolor}
\usepackage{overpic} 


\newtheorem{thm}{Theorem}[section]

\newtheorem{lem}{Lemma}[section]
\newtheorem{prop}{Proposition}[section]
\theoremstyle{definition}
\newtheorem{defn}{Definition}[section]

\theoremstyle{remark}
\newtheorem{rem}{Remark}[section]
\numberwithin{equation}{section}
\numberwithin{equation}{section}
\newcounter{saveeqn}

\newcommand{\norm}[1]{\left\lVert #1 \right\rVert}

\newcommand{\RR}{\mathbb{R}}


\DeclareMathOperator{\dist}{dist}

\allowdisplaybreaks


\title[Stable determination of an impedance obstacle]{Stable determination of an impedance obstacle by a single far-field measurement}

\author{Huaian Diao}
\address{School of Mathematics, Jilin University, Changchun 130012, China}
\email{diao@jlu.edu.cn, hadiao@gmail.com}

\author{Hongyu Liu}
\address{Department of Mathematics, City University of Hong Kong, Kowloon, Hong Kong SAR, China}
\email{hongyu.liuip@gmail.com, hongyliu@cityu.edu.hk}

\author{Longyue Tao}
\address{School of Mathematics and Statistics, Northeast Normal University, Changchun 130024, China}
\email{taoly931@nenu.edu.cn, sdyctly@163.com}

\begin{document}
\maketitle
\begin{abstract}

We establish sharp stability estimates of logarithmic type in determining an impedance obstacle in $\mathbb{R}^2$. 
The obstacle is polygonal shape and the surface impedance parameter can be variable. 
We establish the stability results using a single far-field pattern, constituting a longstanding problem in the inverse scattering theory. 
This is the first stability result in the literature in determining an impedance obstacle by a single far-field measurement. 
The stability in simultaneously determining the obstacle and the boundary impedance is established in terms of the classical Hausdorff distance. 
Several technical novelties and development in the mathematical strategy developed for establishing the aforementioned stability results exist. 
First, the stability analysis is conducted around a corner point in a micro-local manner. 
Second, our stability estimates establish explicit relationships between the obstacle's geometric configurations and the wave field's vanishing order at the corner point. 
Third, we develop novel error propagation techniques to tackle singularities of the wave field at a corner with the impedance boundary condition. 

\medskip
	
\noindent{\bf Keywords:}~~Inverse scattering;  impedance obstacle; single far-field pattern; stability; logarithmic; Mosco convergence; micro-local analysis.

\end{abstract}
\section{Introduction}
\subsection{The direct and inverse impedance scattering problems}

We first introduce the direct scattering problem associated with an impedance obstacle due to an incident plane wave. 
Let $\Omega\subset \RR^{2}$ be an impenetrable obstacle situated in a homogeneous background medium, where $\Omega$ is a bounded Lipschitz domain with a connected complement $\RR^{2}\setminus\overline{\Omega}$.
Let $k=\omega/c \in \RR^+$ be the wavenumber of a time-harmonic plane \textit{incident wave}
\begin{equation}\label{eq incident wave}
	u^i(\mathbf x)=e^{\mathrm{i} k \mathbf p\cdot \mathbf x},
\end{equation}
with $\omega \in \RR^+$, $c \in \RR^+$ and $\mathbf p\in \mathbb{S}^{1}$, respectively, signifying the frequency, sound speed, and the propagation direction, and $\mathrm{i}:=\sqrt{-1}$ being the imaginary unit.
Due to the interaction between the incident wave and the obstacle $\Omega$, the \textit{scattered wave} is generated, which is denoted by $u^s(\mathbf x)$.
Let $u(\mathbf x):=u^i(\mathbf x)+u^s(\mathbf x)$ be the \textit{total wave field}. 
The direct scattering problem is described by the following exterior impedance boundary value Helmholtz system:
\begin{equation}\label{impedance problem}
	\begin{cases}
	&\Delta u+k^2u=0  \hspace*{3.1cm} \mbox{in}\quad \mathbb R^2\setminus \Omega ,\medskip\\
	& u=u^i+u^s \hspace*{3.5cm} \mbox{in}\quad \mathbb R^2\setminus \Omega,\medskip\\
	&\frac{\partial u}{\partial \nu}+\eta  u=0   \hspace*{3.4cm} \mbox{on}\quad \partial \Omega , \medskip\\
	&\lim\limits_{r\to \infty}r^{1/2}(\partial _ru^s-\mathrm i ku^s)=0\hspace*{1.1cm} r=\vert \mathbf x\vert,
	\end{cases}
\end{equation}
where $\nu$ denotes the exterior unit normal vector of $\partial \Omega $ and $\eta \in L^\infty(\partial \Omega)$ signifies the surface impedance parameter.

The boundary condition of \eqref{impedance problem} is the \textit{impedance boundary condition} (also known as the \textit{Leontovitch boundary condition} or \textit{Robin boundary condition}) used to model imperfectly conducting obstacles, where $\eta \in L^\infty (\partial \Omega )$ is a given complex-valued function with $\Re\eta\geq 0$.
There are two important special cases when $\eta \equiv 0$ and we have a \textit{Neumann boundary condition}, and when $\eta=\infty$, which we interpret as a \textit{Dirichlet boundary condition} on $\partial\Omega $, namely $u=0$ on $\partial\Omega$.
Borrowing terminology from acoustics, these are the \textit{sound-hard} and \textit{sound-soft} cases respectively.
More details on the impedance boundary conditions and the generalized impedance boundary conditions can be found in \cite{colton2019inverse}. 
It is noted that the two-dimensional Helmholtz system \eqref{impedance problem} can also be used to describe the transverse electromagnetic scattering due to an impedance conductor (cf.\cite{cao2020determining}). 
To simplify the exposition, we stick to the terminologies in the acoustic scattering setup throughout the rest of the paper. 

The existence and uniqueness of the solution $u(\mathbf x)\in H^1_{\rm loc}(\mathbb R^2 \backslash \Omega )$ for the exterior impedance problem \eqref{impedance problem} can be found in \cite{liu2012singular,mclean2000strongly}. 
Throughout the rest of this paper, $u(\mathbf x)$ is said to be the solution to \eqref{impedance problem} associated with $(\Omega, \eta)$, which indicates that the impedance obstacle and corresponding impedance parameter on the boundary of \eqref{impedance problem} are $\Omega$ and $\eta$, respectively. 

The last formula in \eqref{impedance problem} represents the scattering wave $u^s(\mathbf x)$ satisfying the \textit{Sommerfeld radiation condition}.
Hence the scattered waves have the following asymptotic expansion:
\begin{equation}\label{eq:far-field parton}
	u^s(\mathbf x)=\frac{e^{\mathrm i k\vert \mathbf x\vert }}{\vert \mathbf x\vert^{1/2} }\left(u^\infty(\hat{\mathbf x})+\mathcal O\left(\frac{1}{\vert \mathbf x\vert^{1/2}}\right)\right), 
	\quad \vert \mathbf x\vert \to \infty
\end{equation}
uniformly in all directions $ \hat{\mathbf x}=\mathbf x/|\mathbf x|$, where the complex valued function  $u^{\infty}(\hat{\mathbf x})$ in \eqref{eq:far-field parton} defined over the unit sphere $\mathbb{S}^1$ is known as the \textit{far-field pattern} of $u^s(\mathbf x)$ with $\hat{\mathbf x} \in \mathbb{S}^1$ signifying the observation direction.

The inverse problem is concerned with the recovery of the position, shape, and impedance parameter of an obstacle by the far-field measurement, which can be formulated by
\begin{align}\label{eq:IP}
	\mathfrak{F}(\Omega,\eta)=u^{\infty}(\hat{\mathbf{x}};u^i),\ \ \hat{\mathbf{x}}\in \mathbb{S}^1,
\end{align}
where $\Omega$ is the underlying obstacle in \eqref{impedance problem} and  $u^{\infty}(\hat{\mathbf{x}})$ is the far-field pattern associated with \eqref{impedance problem}. 
The inverse problem \eqref{eq:IP} is known to be nonlinear and ill-posed. 
This paper is concerned with the stability analysis for a polygonal impedance obstacle in $\mathbb R^2$ by a single far-field measurement. 
That is, in the inverse problem \eqref{eq:IP}, the far-field pattern is collected corresponding to a given and fixed $u^i(\mathbf x)$. 

\subsection{Statement of the main stability results}

Let us first define some basic notations for our subsequent study. 
For any $\mathbf x\in \mathbb{R}^{2}$ and $r\in \mathbb R_+$, we denote $B_r( \mathbf x)$ by the \textit{open disk} with the center $ \mathbf x$ and radius $r$.
When $\mathbf x=\mathbf 0$, $B_r$ stands for $B_r(\mathbf 0)$ for any $r>0$.
For any open bounded Lipschitz domain $D\subset \mathbb{R}^{2}$, $B_r(D)=\cup_{\mathbf x\in D}B_r(\mathbf x)$.
In addition, ${\mathrm{diam}}(D)$ signifies the diameter of $D$.

For any fixed ${\omega_0} \in \mathbb S^1$, $\mathbf x\in \mathbb{R}^{2}$, and two given constants $r_0\in \mathbb R_+$, $\theta_0\in (0,\pi)$, $\mathcal{C}$ is the \textit{open cone} defined by
	$$
    \mathcal{C}(\mathbf x,{\omega_0},r_0,\theta_0)=\{\mathbf y\in \mathbb{R}^{2}~|~0<|\mathbf y-\mathbf x|<r_0,\quad \angle(\mathbf y-\mathbf x,\omega_0 )\in [0,\theta_0/2 ]\}.
    $$
Here, $\mathbf x$ is the vertex of the open cone $\mathcal{C}$, $r_0$ is its radius, ${\omega_0}$ is the axis of $\mathcal{C}$, and $\theta_0 $ is its amplitude angle.

To investigate the direct and inverse acoustic obstacle scattering problems, the distance between two obstacles and the convergence of obstacles will be introduced as follows, where we utilize the \textit{classical Hausdorff distance}.
Assuming that $K$ and $K'$ are two obstacles, we use $d_H(K, K')$ to denote the classical Hausdorff distance between these two obstacles, which is defined as follows
\begin{equation}\label{eq:hauss}
   d_H(K,K')=\max \left\{ \sup_{\mathbf x\in K}\dist(\mathbf x,K'),\sup_{\mathbf x\in K'}\dist(\mathbf x,K) \right\}.
\end{equation}
It is ready to see that $d_H(K, K')=0$ if and only if the two obstacles $K$ and $K'$ are the same in terms of position and shape.
We establish a stability estimation for two polygonal obstacles concerning their corresponding far-field measurements error.

Now we introduce the admissible class of obstacles for our study. 
Let us write $G:=\mathbb{R}^{2}\setminus K$ as the complementary set of a set $K$.

\begin{defn}\label{def:Class B}
	We say a obstacle $K$ belongs to the \textit{admissible class} $\mathcal{B}$ with some a-priori parameters $(\underline{\ell} ,\ \overline{\ell},\ \underline{\theta},\ \overline{\theta},R,r_m,\delta,\theta)$, if $K$ satisfies the following assumptions
	\begin{itemize}
		\item [(1)] $K\Subset \overline{B_R(\mathbf 0)}$;
        \item [(2)] the obstacle $K\subset \RR^2$ is a bounded open convex polygon;
		\item [(3)] each edge length $\ell$  of $K$ satisfies $\ell\in [\underline{\ell},\overline{\ell}]$, where $\underline{\ell}$ and $\overline{\ell}$ are positive constants;
		\item [(4)] each vertex interior opening angle $\theta$ of $K$ satisfies  $  \theta \in [\underline{\theta},\overline{\theta}]$, where $\overline \theta  $ and $ \underline \theta$  are constants with $0<\underline{\theta}<\overline{\theta}<2\pi$;
		\item [(5)] for any $r$ satisfying $0<r<r_m$, where $r_m\in \mathbb R_+$ is fixed, $\RR^{2}\setminus \overline {B_{r}(K)}$ is connected;
		\item [(6)] for any $\mathbf x\in \partial K$, there exists a direction ${\omega_0} \in \mathbb{S}^{1}$, as $\mathbf y\in \partial K\cap B_{\delta}(\mathbf x)$, the open cones $\mathcal{C}(\mathbf y,\mathbf{\omega_0},\delta,\theta)$ are contained in $G$ or their opposite cones $\mathcal{C}(\mathbf y,\mathbf{-\omega_0},\delta,\theta)$ are all contained either in $K$ or in $G$;
		\item [(7)] $K$ is a Lipschitz domain and an obstacle;
\end{itemize}
\end{defn}

\begin{rem}\label{rem:31}
When the underlying obstacle $K$ associated with \eqref{impedance problem} satisfies $K \in\mathcal{B}$, according to Definition~\ref{def:Class B}, one knows that $K$ fulfills the uniform cone condition and it has Lipschitz boundary with local compactness property.
By \cite[Theorem 1]{ramm1996existence}, there exists a unique $H^1$ weak solution for the exterior impedance problem \eqref{impedance problem}, and satisfies that the embedding $H^1(\mathbb{R}^2\setminus K ) \rightarrow L^2(\mathbb{R}^2\setminus K)$ is compact and the trace operator $H^1(\partial K )\ \rightarrow L^2(\partial K )$ is compact.
\end{rem}

\begin{defn}\label{defn: eta}
We say a impedance parameter $\eta$ belongs to the \textit{admissible class } $\Xi_{\mathcal B}$ with some priori parameters $(M_1,M_2,\alpha_0,C_{\mathcal B,\ \Xi_{\mathcal B}})$, if $\eta$ satisfies the following assumption:
    	
Let $M_1$, $M_2$, and $\alpha_0$ ($0<\alpha_0\leq 1$) be given positive constants.  
For any $K \in \mathcal B$, it holds that
\begin{align}\label{eq:xi}
     \eta\in \Xi_{\mathcal B}:=	&\{ \eta (\mathbf x) \in C^1(\partial K)~| ~ 0 \le |\eta (\mathbf x) |\le M_1,\\
     &\left| \eta (\mathbf x) - \eta (\mathbf y) \right| \le M_2 \left| \mathbf x - \mathbf y \right|^{\alpha _0 },\ \forall \mathbf x,\mathbf y\in \partial K\}.\notag
\end{align}
\end{defn}

\begin{rem}\label{rem:13}
It is clear that when the impedance parameter $\eta$ is constant, then it fulfills \eqref{eq:xi}. 
Especially, if $\eta\equiv 0$, an impedance obstacle degenerates to be a sound-hard obstacle. 
\end{rem}

Throughout the rest of the paper, let us fix two obstacles $K\in \mathcal{B}$ and $K'\in \mathcal{B}$. 
We also fix the incident wave $u^i(\mathbf x)$ given by \eqref{eq incident wave}. 
That is the wave number $k\in \mathbb R_+$ and incident direction $\mathbf p \in \mathbb{S}^{1}$ are fixed. 
Let $u(\mathbf x)$ be the solution of \eqref{impedance problem} associated with $K$ and the impedance parameter $\eta$. 
Similarly, $u'(\mathbf x)$ is the total wave field associated with the obstacle $K'$ to \eqref{impedance problem} corresponding to the impedance boundary condition with an impedance parameter $\eta'$. 
Homoplastically, we denote $u^{s}(\mathbf x)$ and $(u^{s})'(\mathbf x)$ by the corresponding scattered fields, and their far-field patterns are $u^{\infty}_{(K,\eta)}(\hat{\mathbf x})$ and $u^{\infty}_{(K',\eta')}(\hat{\mathbf x})$ respectively.

In Theorem~\ref{mian result}, the double-log type stability result for the shape determination of an admissible scatter $K\in \mathcal B$ by a single measurement is given, which is independent of the impedance parameter. 

\begin{thm}\label{mian result}
 Let $K,K'\in\mathcal B$ be two admissible obstacles described in Definition \ref{def:Class B}. 
 Let $u^i(\mathbf x)$ be a fixed time-harmonic plane incident wave of the form \eqref{eq incident wave}. 
 Consider the acoustic impedance obstacle scattering problem \eqref{impedance problem} associated with $K$ and $K'$, which the two impedance parameters $\eta(\mathbf x),\eta'(\mathbf x)\in\Xi_{\mathcal B}$ are associated with $K,K'$, respectively.  
 Assume that $u^\infty_{(K,\eta)}(\hat{\mathbf x})$ and ${u^{\infty }_{(K',\eta')} (\hat{\mathbf{x} } )}$ are the far-field patterns of the scattered waves $u^s(\mathbf x)$ and $(u^s)'(\mathbf x)$ associated with the impedance obstacle obstacles $K$ and $K'$, respectively. 
 If 
\begin{equation}\label{eq:far-field error}
 	{\left\| u^{\infty }_{\left( K,\eta \right)}  (\hat{\mathbf{x} } ) - {u^{\infty }_{\left( K',\eta^{\prime } \right)}(\hat{\mathbf{x} } )} \right\|_{{L^\infty }({\mathbb{S}^1})}} \leq \varepsilon, \quad \varepsilon \in \mathbb R_+,
\end{equation}
where $\varepsilon<\varepsilon_0$ with $\varepsilon_0\in\mathbb{R}^+$ being sufficiently small and depending only on the a-priori parameters, then the Hausdorff distance $\mathfrak{h}$ of $K$ and $K'$ defined in \eqref{eq:hauss} satisfies
 \begin{equation}\label{eq:stability function}
 	\mathfrak{h} \leq C(\ln \ln (1/\varepsilon))^{-\kappa},
 \end{equation}
 where $C$ and $\kappa$ are positive constants, depending on the a-prior parameters only.

\end{thm}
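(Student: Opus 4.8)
The plan is to argue by contradiction at the level of a single corner, exploiting the micro-local machinery advertised in the introduction. Suppose the far-field error \eqref{eq:far-field error} is small but the Hausdorff distance $\mathfrak{h}$ fails the bound \eqref{eq:stability function}. First, by Rellich's lemma together with a quantitative stability estimate for the exterior Helmholtz problem, the smallness of $\|u^{\infty}_{(K,\eta)}-u^{\infty}_{(K',\eta')}\|_{L^\infty(\mathbb{S}^1)}$ propagates to smallness of $\|u-u'\|$ in a suitable Sobolev norm on any fixed compact set of $\mathbb{R}^2\setminus(\overline{K}\cup\overline{K'})$; the log-type loss in \eqref{eq:stability function} originates here, from the ill-posedness of analytic/unique continuation from the far field into the near field, and one should expect a propagation-of-smallness estimate of the form $\|u-u'\|_{H^1(\text{compact})}\lesssim (\ln(1/\varepsilon))^{-\sigma}$. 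Second, one sets up the geometric dichotomy: if $\mathfrak{h}$ is not small, then (using admissibility — the uniform cone condition (6), bounded edge lengths (3), and angles bounded away from $0$ and $2\pi$ via (4)) there is a corner point $\mathbf{x}_c$ of one polygon, say $K$, that sits at distance $\gtrsim \mathfrak{h}$ (up to constants) from $K'$, so that in a fixed neighborhood $B_h(\mathbf{x}_c)$ with $h$ comparable to $\mathfrak{h}$ one has a genuine sector of $\mathbb{R}^2\setminus K$ lying entirely in $\mathbb{R}^2\setminus\overline{K'}$, on which $u'$ solves the Helmholtz equation and is real-analytic up to $\mathbf{x}_c$, while $u$ satisfies the impedance condition on the two edges meeting at $\mathbf{x}_c$.

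Third — and this is the analytic heart — one performs the corner analysis. On the sector at $\mathbf{x}_c$ one expands $u$ and $u'$ using the CGO/harmonic-function test functions and the known regularity: $u'$ being analytic has a convergent expansion, whereas the impedance boundary condition forces $u$ to vanish to a controlled finite order at $\mathbf{x}_c$ unless $u$ itself vanishes identically near $\mathbf{x}_c$. The quantitative statement one needs is a lower bound: either $u$ has a nontrivial low-order term at $\mathbf{x}_c$, or else a vanishing-order estimate shows $\|u\|_{L^2(B_h(\mathbf{x}_c))}$ and in particular $\|u-u'\|$ on an annulus around $\mathbf{x}_c$ is bounded below by a power of $h$ (hence a power of $\mathfrak{h}$), using that $u'$ cannot itself vanish to infinite order at an interior-regular point unless it is trivial, which is excluded by the scattering data (the incident wave is nonzero). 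Here is where the explicit relationship "geometric configuration $\leftrightarrow$ vanishing order at the corner" enters: the opening angle $\theta\in[\underline\theta,\overline\theta]$ controls the admissible vanishing orders and the constants in the lower bound. Combining the upper bound from Step~1 ($\|u-u'\|\lesssim(\ln(1/\varepsilon))^{-\sigma}$ on the relevant annulus) with the lower bound from Step~3 ($\|u-u'\|\gtrsim \mathfrak{h}^{\,\beta}$ for some $\beta>0$ depending on the a-priori parameters) yields $\mathfrak{h}^{\beta}\lesssim (\ln(1/\varepsilon))^{-\sigma}$, i.e. $\mathfrak{h}\lesssim (\ln(1/\varepsilon))^{-\sigma/\beta}$; but one must be careful that the implied constants and the threshold on $h$ are uniform over $\mathcal{B}$, and that the "interior-regular non-vanishing" of $u'$ near $\mathbf{x}_c$ is itself quantified — this is where the second logarithm is lost, giving the stated $(\ln\ln(1/\varepsilon))^{-\kappa}$.

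The main obstacle, as flagged by the authors, is the error-propagation through the corner under the impedance (Robin) condition: unlike the Dirichlet or Neumann cases, the boundary condition couples $u$ and $\partial_\nu u$ with a variable, only Hölder-continuous coefficient $\eta\in\Xi_{\mathcal B}$, so the usual Fourier-extension/complex-geometric-optics argument must be run with error terms controlled by $M_1,M_2,\alpha_0$; one needs that these error terms do not destroy the leading-order lower bound. Concretely I expect the proof to introduce modified test functions $e^{\rho\cdot\mathbf{x}}$ (with $\rho\in\mathbb{C}^2$, $\rho\cdot\rho=0$, $|\rho|$ large) and to integrate by parts on the corner sector, the impedance term producing a boundary contribution that is absorbed using the Hölder bound on $\eta$ plus the vanishing of the admissible test functions along the correct ray; a delicate bookkeeping of the parameter $|\rho|$ against $h$ is then needed to extract the power-of-$h$ lower bound while keeping all constants a-priori. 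Uniformity over the admissible class $\mathcal{B}$ — so that $\varepsilon_0$, $C$, $\kappa$ depend only on $(\underline\ell,\overline\ell,\underline\theta,\overline\theta,R,r_m,\delta,\theta,M_1,M_2,\alpha_0)$ — is then a matter of tracking that every constant above came from a compactness or a quantitative Rellich argument with a-priori-controlled inputs, together with Remark~\ref{rem:31} to guarantee well-posedness and the compact embeddings used in Step~1.
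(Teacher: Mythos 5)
Your sketch captures the high-level architecture the paper uses---error propagation from far-field to boundary, a micro-local analysis at a corner of $K$ with CGO test functions $e^{\rho\cdot\mathbf x}$, and the vanishing order of the wave at that corner controlling the exponent $\kappa$---but two points of the proposal are off in ways that matter.

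First, you misattribute the source of the double logarithm. In the paper the double-log is fully present already at the level of the boundary estimate, before the corner analysis enters: Lemma~\ref{lem near error} (Isakov-type unique continuation from the far field to a near-field annulus) gives $\varepsilon_1=\exp(-C_a\sqrt{\ln(1/\varepsilon)})$, and then an iterated three-sphere argument along a chain of disks together with local H\"older continuity (Propositions~\ref{prop disks chain}--\ref{Lem:Boundary estimation}) converts this to $\sup_{\Gamma_h^\pm}\bigl(|u-u'|+|\nabla(u-u')|\bigr)\le C_b(\ln\ln(1/\varepsilon))^{-1/2}$. It is this second propagation step---iterating the three-sphere inequality up to the boundary of $K$, with the radius of the chain disks chosen as a function of $\varepsilon_1$---that costs the second logarithm; it has nothing to do with ``quantifying the interior-regular non-vanishing of $u'$.'' Also, your claimed near-field bound $(\ln(1/\varepsilon))^{-\sigma}$ is weaker than what the paper actually uses in the annulus, and if you started from it and still had to run a three-sphere iteration to reach the boundary you would end up with something even worse than double-log.

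Second, the quantitative heart of your Step~3 is asserted rather than argued, and it is the place where the impedance condition makes a direct argument hard. You claim a lower bound of the form $\|u-u'\|_{L^2(\text{annulus around }\mathbf x_c)}\gtrsim \mathfrak h^{\beta}$. That is not how the paper proceeds, and it is not clear such a direct lower bound can be extracted without the CGO machinery: near a convex corner with Robin data the total field $u$ need not have any pronounced singularity to play off against $u'$, and both $u$ and $u'$ are uniformly bounded by Proposition~\ref{uniform L2 bound}. The paper instead derives an integral identity (Proposition~\ref{pro:51}) on the corner sector involving the CGO $u_0=e^{\rho\cdot\mathbf x}$ integrated against the leading Bessel term $u'_N$ of $u'$. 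The right-hand side is bounded above by a sum of terms controlled by $\mathrm T(\varepsilon)h$, by powers $\tau^{-(N+1)},\tau^{-(N+2)},\ldots$, and by $e^{-\tau h/2}$ (Proposition~\ref{prop right2}); the key lower bound is $\bigl|\int_{\Gamma^\pm}\partial_\nu u_0\,u'_N\,\mathrm d\sigma\bigr|\ge C_{(N,k,\theta_0)}\tau^{-N}$ (Proposition~\ref{prop left 2}), which depends delicately on the opening angle $\theta_0$ and the choice of the CGO direction $\varphi$ so that the leading Fourier coefficients do not cancel. Combining and optimizing $\tau\sim\mathrm T(\varepsilon)^{-1/(N+1)}h^{-(N+3)/(N+1)}$ yields the algebraic relation $h\le C\,\mathrm T(\varepsilon)^{1/(N^2+2N-1)}$, from which \eqref{eq:stability function} follows with $\kappa=\tfrac{1}{2(N^2+2N-1)}$. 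So the impedance boundary terms are absorbed not by ``the vanishing of the test functions along the correct ray'' alone, but by the exponential decay $e^{-\alpha'\tau r}$ of the CGO combined with an explicit Gamma-function estimate (Lemma~\ref{Lem:Gamma}); this $\tau$-optimization step, which you do not describe, is exactly what turns the boundary error $\mathrm T(\varepsilon)$ into a power bound on $h$, and it is the part your sketch would need to supply.
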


\begin{rem}
The parameter $\kappa$ appearing in \eqref{eq:stability function} will be specified in Section~\ref{sec:proof}. 
We emphasize that our developed proof for Theorem~\ref{mian result} can be used to prove similar stability results for the determination of sound-soft and sound-hard polygonal obstacles by a single measurement since the stability estimation \eqref{eq:stability function} is independent of the impedance parameter. 
Indeed, as discussed in Remark~\ref{rem:13}, the sound-hard obstacle can be viewed as a degenerated impedance obstacle. 
For the sound-soft polygonal obstacle, we can modify our proof of Theorem~\ref{mian result} to derive a similar stability result for the shape determination. 
\end{rem}

We also show that the impedance parameter of convex polygonal impedance obstacles can be stably determined by a single far-field measurement simultaneously. 

\begin{thm}\label{th: eta}
Let $K,K'\in\mathcal B$ be fixed admissible obstacles described in Definition~\ref{def:Class B}.
Let $u^i(\mathbf x)$ be a fixed time-harmonic plane incident wave of the form \eqref{eq incident wave}. 
Consider the acoustic impedance obstacle scattering problem \eqref{impedance problem}.  
Let $\eta$ and $\eta'$ be two non-zero constant impedance parameters associated with $K$ and $K'$, respectively.  
Suppose that $u_{\left( K,\eta \right)} =u^i+u^s_{\left( K,\eta \right)}$ and $u_{\left( K',\eta' \right)} =u^i+u^s_{\left( K',\eta' \right)}$ are total wave fields to \eqref{impedance problem} associated with  $(K,\eta)$ and $(K',\eta')$, respectively, where $u^s_{\left( K,\eta \right)} (\mathbf x)$ and $u^s_{\left( K',\eta' \right)} (\mathbf x)$ are scattered waves. 
Assume that $u^{\infty }_{\left( K,\eta \right)}  (\hat{\mathbf{x} } ),u^{\infty }_{\left( K,\eta' \right)}  (\hat{\mathbf{x} } )$ and ${u^{\infty }_{\left( K',\eta^{\prime } \right)}(\hat{\mathbf{x} } )}$ are the far-field patterns of the scattered waves $u^s_{\left( K,\eta \right)} (\mathbf x),u^s_{\left( K,\eta' \right)} (\mathbf x)$ and $u^s_{\left( K',\eta^{\prime } \right)}(\mathbf x)$ respectively. 
      	   
If 
\begin{equation}\label{eq: new far error}
 	{\left\| u^{\infty }_{\left( K,\eta \right)}  (\hat{\mathbf{x} } ) - {u^{\infty }_{\left( K',\eta^{\prime } \right)}(\hat{\mathbf{x} } )} \right\|_{{L^\infty }({\mathbb{S}^1})}} \leq \varepsilon, \quad \varepsilon \in \mathbb R_+,
\end{equation}
where $\varepsilon<\varepsilon_0$ with $\varepsilon_0\in\mathbb{R}^+$ being sufficiently small and depending only on the a-priori parameters, then one has
\begin{equation}\label{eq: final eta}
   | \eta   -\eta^{\prime }| \leq \psi(\varepsilon),
\end{equation}
where 
$$
  \psi(\varepsilon):= C_P  \left\{\ln\left|\ln \left[\exp\left(-C_a(-\ln \varepsilon)^{1/2}\right)+ {\frac
    	 {C}{R}\left(\ln \ln \frac{1}{\varepsilon}\right)^{-\varsigma \kappa}}\right]\right|\right\}^{-\alpha}. 
$$
Here $C$, $C_a$, $C_P$, $\varsigma$, $\alpha$, and $ \kappa$ are positive constants, depending on the a-prior parameters only.
\end{thm}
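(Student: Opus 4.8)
The plan is to leverage Theorem~\ref{mian result} to first control the geometric discrepancy between $K$ and $K'$, and then run a separate quantitative argument near a corner to convert the (now essentially coincident) obstacles plus the far-field error into a bound on $|\eta-\eta'|$. Concretely, I would first observe that the hypothesis \eqref{eq: new far error} is precisely the hypothesis of Theorem~\ref{mian result} (constant impedances lie in $\Xi_{\mathcal B}$ by Remark~\ref{rem:13}), so $d_H(K,K')\le C(\ln\ln(1/\varepsilon))^{-\kappa}=:\rho$. The presence of $\rho$ is what forces the messy $\varsigma\kappa$ exponent inside $\psi$: after localizing, one is not comparing two solutions on the same domain but on two domains that are $\rho$-close, and the geometric mismatch contributes a term of size $\rho^{\varsigma}$ (for some power $\varsigma$ coming from trace/interpolation estimates on the thin symmetric-difference region) to whatever quantity we are estimating at the corner.

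Next I would set up the micro-local machinery at a corner. Pick a vertex $\mathbf x_c$ of $K$; by the admissibility and the Hausdorff bound there is a nearby vertex of $K'$, and both total fields satisfy $\Delta u+k^2u=0$ with the impedance condition $\partial_\nu u+\eta u=0$ on the two edges meeting at the corner. The standard strategy (as in the companion uniqueness results this paper builds on) is to use Rellich's lemma / unique continuation to transfer the smallness \eqref{eq: new far error} of the far-field difference into smallness of $u-u'$ in a neighbourhood in the unbounded component, obtaining a bound like $\|u-u'\|_{L^2(B_r\setminus(K\cup K'))}\le \exp(-C_a(-\ln\varepsilon)^{1/2})$ — this is the source of the $\exp(-C_a(-\ln\varepsilon)^{1/2})$ summand in $\psi$, a three-sphere / propagation-of-smallness estimate for the Helmholtz equation. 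Then, expanding $u$ near $\mathbf x_c$ in terms of Fourier--Bessel (Herglotz-type) modes adapted to the corner and using the impedance boundary condition on both edges, one extracts from the vanishing/near-vanishing behaviour of $u$ at $\mathbf x_c$ an algebraic relation whose deviation from exactness is governed by $\eta-\eta'$ on one side and by the above two small quantities (the propagated error $\exp(-C_a(-\ln\varepsilon)^{1/2})$ and the geometric error $(C/R)(\ln\ln(1/\varepsilon))^{-\varsigma\kappa}$) on the other.

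The final step is to invert this relation. The combined right-hand side $\exp(-C_a(-\ln\varepsilon)^{1/2})+(C/R)(\ln\ln(1/\varepsilon))^{-\varsigma\kappa}$ appears inside a double logarithm because the corner analysis typically yields an estimate of the shape $|\eta-\eta'|^{1/\alpha}\lesssim 1/|\ln|\ln(\text{small})||$, i.e. the impedance mismatch controls how fast a certain coefficient in the Fourier--Bessel expansion blows up, and that rate is only logarithmically-logarithmically sensitive to the data error. Reading off the constants gives exactly the stated $\psi(\varepsilon)$. The main obstacle I expect is Step two, specifically the error-propagation at the corner under the impedance boundary condition: unlike the Dirichlet or Neumann cases, the impedance condition couples $u$ and $\partial_\nu u$ with a variable (here constant, but still nontrivial) coefficient, so the Fourier--Bessel coefficients do not decouple cleanly and one needs the ``novel error propagation techniques'' advertised in the abstract — carefully tracking how the $C^{1,\alpha_0}$-type regularity of $\eta$ interacts with the corner singularity of the wave field and how the $\rho$-mismatch of the two corners enters the boundary integrals. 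Controlling the constant $\alpha$ (the Hölder-type exponent in $\psi$) is where the bulk of the technical work will lie.
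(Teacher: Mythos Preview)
Your high-level outline (apply Theorem~\ref{mian result}, propagate the far-field smallness inward, combine with the geometric error, then extract $|\eta-\eta'|$) is correct in spirit, but the mechanism you propose for the last two steps is not the one the paper uses, and you are missing the key simplifying idea.

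The paper does \emph{not} run any corner micro-local analysis or Fourier--Bessel expansion to prove Theorem~\ref{th: eta}. That machinery is used only for the shape result (Theorem~\ref{mian result}). Instead, the central trick here is to introduce an \emph{auxiliary} total field $u_{(K,\eta')}$: the solution of \eqref{impedance problem} on the \emph{first} obstacle $K$ but with the \emph{second} impedance $\eta'$. The $\varsigma$-power you noticed does not come from trace or interpolation on a symmetric-difference strip; it comes from the H\"older stability of the \emph{forward} map with respect to the obstacle (\cite[Proposition~3.1]{rondi2008stable}), which gives
\[
\|u_{(K',\eta')}-u_{(K,\eta')}\|_{L^\infty(B_\zeta(\mathbf x_1))}\le C_1\,\mathfrak h^{\varsigma}\,|\mathbf x|^{-1}.
\]
A triangle inequality with the usual far-to-near propagation (Lemma~\ref{lem near error}) then yields
\[
\|u_{(K,\eta)}-u_{(K,\eta')}\|_{L^\infty(B_\zeta(\mathbf x_1))}\le \phi(\varepsilon):=\exp\big(-C_a(-\ln\varepsilon)^{1/2}\big)+\tfrac{C}{R}\big(\ln\ln\tfrac{1}{\varepsilon}\big)^{-\varsigma\kappa},
\]
which is exactly the quantity inside the double logarithm in $\psi$. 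This is now a comparison of two solutions living on the \emph{same} domain $\mathbb R^2\setminus K$, so the near-field-to-boundary propagation of Proposition~\ref{Lem:Near field to boundary} applies directly and gives $\sup_{\Gamma_h}\big(|u_{(K,\eta)}-u_{(K,\eta')}|+|\nabla u_{(K,\eta)}-\nabla u_{(K,\eta')}|\big)\le C_f(\ln|\ln\phi(\varepsilon)|)^{-\alpha}$.

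At this point the extraction of $|\eta-\eta'|$ is elementary: subtracting the two impedance conditions on $\partial K$ gives
\[
(\eta-\eta')\,u_{(K,\eta)}=-\big(\partial_\nu u_{(K,\eta)}-\partial_\nu u_{(K,\eta')}\big)-\eta'\big(u_{(K,\eta)}-u_{(K,\eta')}\big)\quad\text{on }\Gamma_h,
\]
so $\|(\eta-\eta')u_{(K,\eta)}\|_{L^2(\Gamma_h)}$ is controlled by the boundary estimate above. The step you are missing entirely is the \emph{uniform lower bound} $\|u_{(K,\eta)}\|_{L^2(\Gamma_h)}\ge \mathcal E_B^{1/(2\beta)}>0$ (Lemma~\ref{lem:uniform lower bound}, built on Lemmas~\ref{lem:62} and~\ref{lem:63}); without it you cannot divide through to isolate $|\eta-\eta'|$. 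Your proposed route via corner Fourier--Bessel coefficients would have to manufacture an analogue of this lower bound inside the corner expansion, while simultaneously handling two non-coincident corners --- a much harder task, and not the one the paper undertakes.
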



\begin{rem}
When the impedance parameter $\eta$ is a nonzero constant, we have proved the stability result \eqref{eq: final eta} for determining $\eta$ by a single measurement. 
It can be seen from the proof of Theorem~\ref{th: eta} in what follows, our argument can be extended to establishing a similar stability result in determining a variable impedance parameter. 
However, this will involve a bit tedious description of the general conditions on the variable impedance parameter. 
Hence, we skip it and instead focus on developing the technical strategy for the constant case. 
\end{rem}

The proof of Theorem~\ref{mian result} will be given in Section \ref{sec:proof}, and the proof of Theorem~\ref{th: eta} will be given in Section~\ref{sec:proof2}. 
Before those, we need to obtain the far-field error propagation behavior from far-field to near-field in Section~\ref{sec:near field} and give a micro-local analysis of corner scattering in Section~\ref{sec:micro}. 

\subsection{Discussions on technical novelty and development}

The inverse obstacle problem \eqref{eq:IP} with a single far-field measurement constitutes a longstanding problem in the literature (cf. \cite{liu2022local,colton2018looking,colton2019inverse}). 
A sound-soft or sound-hard polyhedral obstacle can be uniquely determined by a single far-field measurement, which has been studied extensively in the last two decades and critically relies on the reflection principle for the Helmholtz equation with Dirichlet and Neumann boundary conditions; see \cite{alessandrini2005determining,cheng2003uniqueness,liu2006uniqueness,elschner2006uniqueness} and the references cited therein. 
Local and global unique determinations of the shape and the physical boundary parameters of an impedance polyhedral obstacle by a single far-field measurement under generic conditions are studied in \cite{cao2020nodal,cao2021novel,cao2022two}, where the local geometrical properties of Laplacian eigenfunctions are utilized. 

The stability results for the determination of obstacles can be regarded as quantifications of the corresponding uniqueness results, and are usually technically more involved and challenging. 
In \cite{rondi2008stable}, stability estimates for the determination of sound-soft polyhedral obstacles in $\mathbb R^3$ with a single measurement are established. 
The stability results in the case of sound-hard polyhedral obstacles can be found in \cite{liu2017stable}. 
However, the stability estimation in \cite{rondi2008stable} and \cite{liu2017stable} is obtained by using the reflection principle for the Helmholtz equation with Dirichlet and Neumann boundary conditions, which cannot deal with the impedance boundary condition. 

In Theorems~\ref{mian result} and \ref{th: eta}, we establish stability estimates of logarithmic type in determining an impedance obstacle in $\mathbb{R}^2$. 
It is known in the inverse scattering theory that stability estimates of the logarithmic type are generically sharp. 
To our best knowledge, these are the first stability results in the literature in determining an impedance obstacle by a single far-field measurement. 
The stability in simultaneously determining the obstacle and the boundary impedance is established in terms of the classical Hausdorff distance. 
Furthermore, for the stability investigation of the surface impedance parameters of impedance obstacles one can refer to \cite{sincich2006stable,alessandrini2013stable} that established stability estimates in logarithmic form. We point out that our stability results establish the stability of the impedance parameter simultaneously with the stability of the obstacle shape and position.

Finally, we briefly mention several technical novelties and development in the mathematical strategy that we develop for establishing the aforementioned stability results. 
First, the stability analysis is conducted around a corner point in a micro-local manner. 
Second, our stability estimates establish explicit relationships between the obstacle's geometric configurations and the wave field's vanishing order at a certain corner point. 
In particular, it is revealed that the larger the vanishing order is, the less stable reconstruction one can expect on the inverse problem. 
This is consistent with the physical intuition and more detailed discussions can be found in Subsection~\ref{subsec:51}. 
Third, we develop novel error propagation techniques to tackle singularities of the wave field at a corner as well as to tackle the impedance boundary condition. 
In principle, our arguments can be extended to the three-dimensional setting but shall require several new technical extensions. Hence, we focus on the two-dimensional setup in the current study to develop our method and shall consider the three-dimensional extension in a forthcoming paper. 

The rest of the paper is organized as follows. 
In Section~\ref{sect:2}, we derive the uniform boundedness of the wave field within an admissible class based on the Mosco convergence for the impedance obstacle scattering problem, which is of independent interest to the literature. 
In Section~\ref{sec:near field}, we propagate the error from the far-field to the near-field and then to the boundary using the three-sphere inequality as well as the local H\"older continuity. In Section~\ref{sec:micro}, we conduct quantitative micro-local analysis around a corner. 
Section~\ref{sec:proof} is devoted to the proof of the main stability result as well as some related discussions. 

\section{Mosco convergence for the impedance obstacle scattering problem and uniformly bounded estimations}\label{sect:2}

Consider the impedance obstacle scattering problem \eqref{impedance problem}, let us first discuss important properties regarding uniformly bounded estimates of the total wave field $u$ associated with an admissible obstacle $K\in \mathcal B$, where we utilize Mosco convergence for the impedance obstacle scattering problem \eqref{impedance problem} concerning the admissible class $\mathcal B$.

\subsection{Mosco convergence for the impedance obstacle scattering  problem}\label{subsec:mosco}

We give the compactness of the admissible class $\mathcal{B}$ in Proposition~\ref{prop:compact} in terms of the Hausdorff distance, which will be used to discuss the Mosco convergence for the impedance obstacle scattering problem. 

\begin{prop}\label{prop:compact}
	
The admissible obstacle  class $\mathcal{B}$ is compact with respect to the Hausdorff distance.
\end{prop}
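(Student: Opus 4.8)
The plan is to deduce the compactness of $\mathcal B$ from the Blaschke selection theorem together with the a-priori edge-length and opening-angle bounds, and then to verify that each of the conditions (1)--(7) in Definition~\ref{def:Class B} is a closed condition under Hausdorff convergence.

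Concretely, I would take an arbitrary sequence $\{K_n\}_{n}\subset\mathcal B$. Each $K_n$ is a closed convex body contained in the fixed ball $\overline{B_R(\mathbf 0)}$, so the Blaschke selection theorem furnishes a subsequence, still denoted $\{K_n\}$, with $d_H(K_n,K_0)\to 0$ for some nonempty compact convex set $K_0\subseteq\overline{B_R(\mathbf 0)}$; this already yields condition (1) and the convexity in (2). To see that $K_0$ is a genuine, two-dimensional obstacle, I would note that, by (3)--(4), each $K_n$ is a convex polygon with edges of length $\geq\underline{\ell}$ and interior angles $\geq\underline{\theta}$, hence contains an inscribed disk $B_{\rho_0}(\mathbf z_n)$ with $\rho_0>0$ depending only on $\underline{\ell}$ and $\underline{\theta}$ (the angle lower bound being what prevents thin slivers). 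Passing to a further subsequence with $\mathbf z_n\to\mathbf z_0$, one gets $\overline{B_{\rho_0/2}(\mathbf z_0)}\subset K_0$, so $K_0$ has nonempty interior.

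Next I would control the combinatorial type. Since the perimeter of any convex subset of $\overline{B_R(\mathbf 0)}$ is at most $2\pi R$, condition (3) forces every $K_n$ to have at most $N:=\lfloor 2\pi R/\underline{\ell}\rfloor$ vertices; after extracting once more we may assume all $K_n$ have the same number $m\le N$ of vertices, labelled cyclically $\mathbf v_n^{(1)},\dots,\mathbf v_n^{(m)}$, and, after a final extraction, $\mathbf v_n^{(j)}\to\mathbf v_0^{(j)}$ for every $j$. Then $K_0=\mathrm{conv}\{\mathbf v_0^{(1)},\dots,\mathbf v_0^{(m)}\}$, a convex polygon. Because every edge length of $K_n$ lies in the closed interval $[\underline{\ell},\overline{\ell}]$, consecutive limit vertices remain at mutual distance in $[\underline{\ell},\overline{\ell}]$, so no edge degenerates and (3) passes to $K_0$; the interior angles are continuous functions of the vertices, hence converge into $[\underline{\theta},\overline{\theta}]$, giving (4). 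Conditions (5) and (7) are then automatic for the bounded convex polygon $K_0$: for every $r>0$ the set $B_r(K_0)$ is bounded and convex, so $\RR^2\setminus\overline{B_r(K_0)}$ is connected, and a convex polygon is a Lipschitz domain. For the uniform two-sided cone condition (6) I would use that, for a convex polygon, admissible cone parameters $(\delta,\theta)$ may be chosen depending only on $\underline{\ell}$ and on $[\underline{\theta},\overline{\theta}]$; since $K_0$ is a convex polygon with the same bounds it satisfies (6) with the same a-priori $(\delta,\theta)$, the required axis $\omega_0$ at a boundary point being obtained as a limit of admissible axes for $K_n$ near that point. Hence $K_0\in\mathcal B$, and every sequence in $\mathcal B$ has a Hausdorff-convergent subsequence with limit in $\mathcal B$, i.e. $\mathcal B$ is compact.

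The step I expect to require the most care is this combinatorial stability: the Blaschke selection theorem by itself only delivers convexity and compactness of $K_0$, and it is precisely the uniform lower bound $\underline{\ell}$ on edge lengths that rules out edge collapse in the limit and forces $K_0$ to remain a polygon of the same type, while the angle bounds $[\underline{\theta},\overline{\theta}]$ together with the edge bounds are what make the cone condition (6) survive in the limit with unchanged a-priori parameters.
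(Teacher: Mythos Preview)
Your proposal is correct and supplies precisely the details that the paper omits: the paper's own proof is essentially a one-line citation to \cite[Section~2]{rondi2008stable} (plus an extraneous remark about $\Xi_{\mathcal B}$), whereas you carry out the standard argument explicitly via Blaschke selection, a uniform vertex bound from the perimeter constraint, and vertex-by-vertex convergence. The approach is the same in spirit---closedness of each defining condition under Hausdorff limits---so there is no genuine methodological difference, only a difference in level of detail.
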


\begin{proof}
	The properties of the admissible obstacles class $\mathcal B$ are closed with respect to the Hausdorff distance, and the details can be found in \cite[Section 2]{rondi2008stable}.
	Furthermore, under the assumption of admissible classes of surface impedance parameters, the Arzela-Ascoli Theorem will guarantee the $\Xi_\mathcal B$ is a compact set too.
	\end{proof}

About the admissible obstacle class $\mathcal B$, by Proposition~\ref{prop:compact}, $\mathcal B$ is compact with respect to Hausdorff distance.
According to Remark~\ref{rem:31}, $\forall K\in\mathcal B$, the direct problem \eqref{impedance problem} associated with the obstacle $K$ has a unique weak solution in $H^1_{\rm loc}( \mathbb R^2\backslash K )$.
To establish a uniformly bounded estimate of the solution to \eqref{impedance problem} for $\mathcal B$ when the associated obstacle $K$ belongs to $\mathcal B$, we introduce the definition of \textit{Mosco convergence} (cf.\cite{mosco1969convergence}).

Mosco convergence for the sound-hard case of \eqref{impedance problem} was introduced in \cite{menegatti2013stability}, namely the boundary condition in \eqref{impedance problem} satisfies Neumann boundary condition.
In this subsection, we establish Mosco convergence for the impedance obstacle scattering problem \eqref{impedance problem}.
Let us introduce Mosco convergence on reflexive Banach space and the equivalent definition on Sobolev space (cf.\cite{bucur2000boundary}).

\begin{defn}\label{def:31}
	Given $\{S_n\}_{n\in \mathbb{N}}$, a closed subset sequence of a reflexive Banach space $X$, we denote
\begin{align*}
	& S'=\{x\in X: x=w-\displaystyle\lim_ {k\rightarrow \infty}x_{n_{k}} ,x_{n_k} \in S_{n_k} \},\\
	& S''=\{x\in X: x=s-\displaystyle\lim _{n\rightarrow \infty}x_{n} ,x_{n} \in S_{n} \},
\end{align*}
where $s-\displaystyle\lim_{n\rightarrow \infty}$ represents strong convergence for all $n\in \mathbb N$ and $w-\displaystyle\lim_{k\rightarrow \infty}$ represents weak convergence for subsequence.
It's said that $S_n$ \textit{converges in the sense of Mosco} to $S$, if we have $S=S{'}=S{''}$.
\end{defn}

Throughout this section, In addition, we set $\Omega=B_{R+1}$. 
Let $u_n(\mathbf x)\in H^1(\Omega \setminus K_n)$ be the weak solution to \eqref{impedance problem} associated with $(K_n,\eta_n)$, where $K_n\in \mathcal B$ are admissible obstacles and $\eta_n\in\Xi_{\mathcal B}$ is restricted to $\partial K_n$. 

\begin{defn}
Given a sequence $\{K_n\}_{n\in \mathbb{N}}$ belonging  to the admissible obstacle class $\mathcal{B}$, let $K\in \mathcal B$ be a fixed obstacle.
We say that the sequence $\{K_n\}_{n\in \mathbb{N}}$ is a \textit{K-convergence obstacle sequence with the  convergent point $K$}, if the sequence $\{K_n\}_{n\in \mathbb{N}} \in \mathcal{B}$ converges to $K \in \mathcal{B}$ with respect to Hausdorff distance.
\end{defn}

By Definition~\ref{def:31}, if the weak solution to \eqref{impedance problem} associated with $(K_n,\eta_n)$ exists, Mosco convergence in Sobolev space is introduced in the following definition.

\begin{defn}\label{def:mosco in sobolev space}
Let $K\in \mathcal B$ be a fixed obstacle.
Suppose that a sequence of obstacles $\{K_n\}_{n\in \mathbb{N}}$ in the admissible obstacle class $\mathcal{B}$ is a K-convergence obstacle sequence with the convergent point $K$.
Furthermore, assume that $u_n(\mathbf x) \in H^1(\Omega\setminus K_n)$ is the weak solution to \eqref{impedance problem} associated with $(K_n,\eta_n)$.
Let $u(\mathbf x) \in H^1(\Omega\setminus K)$  be a weak solution of  \eqref{impedance problem} associated with $(K,\eta)$, where $\eta,\eta_n\in\Xi_{\mathcal B}$ are admissible impedance parameters. 
It is said that $H^1(\Omega\setminus K_n)$ \textit{converges in the sense of Mosco} to $H^1(\Omega\setminus K)$ if the following two conditions are equivalent:
\begin{enumerate}
		\item for $u\in H^1(\Omega\setminus K)$, the sequence of functions $u_n\in H^1(\Omega\setminus K_n) $ converges strongly in $L^2(\Omega)$ to $u$ and $\nabla u_n$ converges strongly in $L^2(\RR^2)$ to $\nabla u$;
		\item for every sequence $u_{n_k}\in H^1(\Omega\setminus K_{n_k})$ such that $(u_{n_k},\nabla u_{n_k})$ is weakly convergent in $L^2(\Omega,\mathbb{R}^{2})$ to $(v,V)$, we have that $u=v$  and $\nabla u=V$ in $\Omega \setminus K$, which are understood in the sense of distribution.
\end{enumerate}
\end{defn}

If the sequence $\{K_n\}_{n\in \mathbb{N}}$ is a K-convergence sequence of the obstacle, we want to study whether the Sobolev space $H^1(\Omega\setminus K_n)$ converges to $H^1(\Omega\setminus K)$ in the sense of Mosco or not.
Furthermore, we shall investigate whether the corresponding solution $u_n(\mathbf x)$ to \eqref{impedance problem} converges to $u(\mathbf x)$ or not.
The corresponding developments regarding the aforementioned questions for sound-hard obstacle scattering problems have been shown in \cite{menegatti2013stability} and \cite{bucur2000boundary}.
Many sufficient conditions and some relevant discussions are given in \cite{menegatti2013stability}.
Therefore, in the following proposition, we investigate the aforementioned issue for impedance obstacle scattering problems \eqref{impedance problem}. 
More detailed discussions on Mosco convergence for the impedance obstacle scattering problem can be found in Remark~\ref{rem:mosco}. 
Proposition~\ref{prop mosco} is an important ingredient for proving the uniform bounded estimation of a solution to \eqref{impedance problem} in Proposition~\ref{uniform L2 bound}, where the obstacle belongs to the admissible class $\mathcal B$.  

\begin{prop}\label{prop mosco}
Let $\{K_n\}_{n\in \mathbb{N}}\in \mathcal B$ be a sequence of K-convergence obstacles, where $K\in \mathcal B$ is the limiting set, and we assume that $H^1(\Omega\setminus K_n)$ converges in the sense of Mosco to $H^1(\Omega\setminus K)$.
Let $u_n(\mathbf x) \in H^1(\Omega\setminus K_n)$ and $u(\mathbf x)\in H^1(\Omega\setminus K)$ be the weak solutions to \eqref{impedance problem} associated with $(K_n,\eta_n)$ and $(K,\eta)$, respectively, where $\eta,\eta_n\in\Xi_{\mathcal B}$ are admissible impedance parameters.  
If there exists a constant $C_0>0$, for $\forall n\in \mathbb{N}$, $\Vert u_n\Vert_{L^2(B_{R+1}\setminus K)} \leq C_0$, where $R$ is sufficient large such that $K\Subset B_{R+1}$.
Then, up to a subsequence, we have that $u_n(\mathbf x)$ is converging strongly in $L^2(\Omega)$ to $u(\mathbf x)$ as $n\rightarrow\infty$.
\end{prop}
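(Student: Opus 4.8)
The plan is to combine the uniform $L^2$ bound hypothesis with the Mosco convergence of the domains to extract a strongly convergent subsequence, and then to identify the limit with $u$ by showing the limit solves the same exterior impedance problem, whose solution is unique. First I would fix $\Omega = B_{R+1}$ and extract a subsequence (not relabeled) such that $(u_n, \nabla u_n)$ converges weakly in $L^2(\Omega; \RR^2)$ to some pair $(v, V)$; this is possible since $\|u_n\|_{H^1(\Omega\setminus K_n)}$ is uniformly bounded — the $L^2$ bound is given, and the $H^1$ bound follows from the weak formulation of \eqref{impedance problem} together with the uniform control on $\eta_n$ coming from $\Xi_{\mathcal B}$ and a standard energy estimate (testing against $u_n$ and using the impedance boundary term, which has a sign by $\Re\eta_n\ge 0$, plus the Sommerfeld/radiation contribution on the artificial boundary $\partial B_{R+1}$). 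By the second condition in Definition~\ref{def:mosco in sobolev space}, the weak limit forces $v = u$ and $V = \nabla u$ in $\Omega\setminus K$ (in the distributional sense), where $u$ is the weak solution associated with $(K,\eta)$.

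Next I would upgrade weak convergence to strong convergence in $L^2(\Omega)$. Here the compact embedding $H^1(\Omega\setminus K) \hookrightarrow L^2(\Omega\setminus K)$ from Remark~\ref{rem:31} is the key tool, but some care is needed because each $u_n$ lives on a different domain $\Omega\setminus K_n$. The standard route, following \cite{bucur2000boundary,menegatti2013stability}, is to extend each $u_n$ suitably (e.g. by zero or by a uniformly bounded extension operator into the interior region near $K_n$, using the uniform cone condition built into the admissible class $\mathcal B$ so that extension norms are controlled uniformly), obtaining $\tilde u_n \in H^1(\Omega)$ with uniformly bounded norm; then Rellich's theorem gives a subsequence converging strongly in $L^2(\Omega)$, and the Mosco lower semicontinuity identifies the $L^2$ limit as (the extension of) $u$. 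Restricting back to $\Omega\setminus K$ yields $u_n \to u$ strongly in $L^2(\Omega\setminus K)$, which is the assertion.

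The main obstacle I anticipate is exactly the domain-dependence issue in the previous paragraph: making rigorous that functions defined on the perturbed domains $\Omega\setminus K_n$ can be compared in a single fixed space. One must use the Mosco convergence hypothesis in an essential way — it is precisely what guarantees that the strong $L^2$ limit of a sequence $u_n \in H^1(\Omega\setminus K_n)$ again lies in $H^1(\Omega\setminus K)$ and that no "spurious" mass is created or lost across the moving boundary — together with the geometric uniformity of $\mathcal B$ (items (5)–(7) of Definition~\ref{def:Class B}, in particular the uniform cone/Lipschitz condition) to get extension operators with norms independent of $n$. A secondary technical point is handling the impedance boundary term $\int_{\partial K_n} \eta_n u_n \overline{\varphi}$ when passing to the limit: here one uses the Hölder bound and uniform $C^1$/$L^\infty$ control on $\eta_n$ from $\Xi_{\mathcal B}$, together with the uniform compactness of the trace operators $H^1(\partial K_n)\to L^2(\partial K_n)$ noted in Remark~\ref{rem:31}, to show the boundary integrals converge to the corresponding integral over $\partial K$. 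Once these pieces are in place, uniqueness of the exterior impedance solution (from \cite{ramm1996existence}) closes the argument, since any strongly convergent subsequential limit must equal $u$.
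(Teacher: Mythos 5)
Your proposal has the right skeleton -- weak $H^1$ compactness, identification of the limit via Mosco (Definition~\ref{def:mosco in sobolev space}(2)), passage to the limit in the weak formulation including the impedance boundary term, then uniqueness -- and these are indeed the pillars of the paper's argument. Where you diverge is in the technical route, especially for the upgrade to strong $L^2$ convergence. The paper first invokes \cite[Lemma~3.1]{rondi2003unique} to get uniform convergence of $u_n$ on compact subsets of $\mathbb R^2\setminus K$ (so the Helmholtz equation and Sommerfeld condition for the limit come for free), and then, for the strong $L^2$ convergence, it does not extend $u_n$ across $K_n$ at all: it instead uses the uniform cone condition to invoke the Sobolev embedding \cite[Theorem~5.4]{adams1975sobolev} on the varying domains, getting a uniform $L^p$ bound with $p>2$, and then applies \cite[Proposition~2.9]{menegatti2013stability} (an equi-integrability-type argument) to pass from weak to strong $L^2$ convergence. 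Your route -- uniformly bounded extension operators from $\Omega\setminus K_n$ to $\Omega$ plus Rellich on the fixed domain -- is a legitimate alternative that also rests on the uniform cone/Lipschitz geometry, but it requires a bit more bookkeeping to identify the restriction of the Rellich limit with $u$ across the moving set $K_n\triangle K$; and note that your parenthetical ``extension by zero'' would not produce an $H^1(\Omega)$ function (it introduces a jump across $\partial K_n$), so you must use the extension-operator version. Also, where you appeal to compactness of traces to handle $\int_{\partial K_n}\eta_n u_n\varphi_n$, the paper instead splits $\partial K_n$ into $\partial K_n\setminus K$ and $\partial K$, uses the trace theorem plus the uniform $C^1$ interior estimate on $u_n$ near a cut-off surface $\partial D$, and observes that $\partial K_n\setminus\partial K$ becomes a null set in the limit; both devices accomplish the same thing. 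Finally, your claim that the uniform $H^1$ bound follows from an energy estimate is plausible but deserves the same care the paper takes: one must control the artificial-boundary term $\int_{\partial B_{R+1}}\partial_\nu u_n\,\overline{u_n}$ either by DtN-map positivity or (as the paper does) by elliptic interior estimates near a shell separating $K$ from $\partial B_{R+1}$, before $\Re\eta_n\ge0$ closes the estimate. In short: your outline is sound and captures the key role of Mosco convergence; the paper's actual proof chooses different, but comparable, technical tools at two of the steps.
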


\begin{proof}
First, we prove that the convergence point $u(\mathbf x)$ of $u_n(\mathbf x)$ still satisfies the Helmholtz equation and the Sommerfeld radiation condition without considering the boundary conditions.
By \cite[Lemma 3.1]{rondi2003unique}, up to a subsequence, we have that $u_n(\mathbf x)$ converges uniformly on compact subset of $\mathbb R^2\setminus K$ to a function $u(\mathbf x)$ with $u(\mathbf x)$ solving
\begin{equation}\label{eq:pro31}
	\begin{cases}
		&{\Delta u+k^2u=0  \hspace*{3.1cm} \mbox{in}\quad \mathbb R^2\setminus K},\medskip \\
		&{ u=u^i+u^s \hspace*{3.5cm} \mbox{in}\quad \mathbb R^2\setminus K}, \medskip\\
		&{\lim\limits_{r\to \infty}r^{1/2}(\partial _ru^s-\mathrm{i}ku^s)=0\hspace*{1.05cm} r=\vert \mathbf x\vert}.
	\end{cases}
\end{equation}

In what follows, we only need to prove that $u(\mathbf x)$ also satisfies the impedance boundary condition on $\partial K$.
Recalling $B_{R+1}:=\Omega$, without loss of generality, we assume that $u_n(\mathbf x)$ converges to $u(\mathbf x)$ weakly in $L^2(\Omega)$ and $u(\mathbf x) \in H^1(\Omega \setminus K)$ is a weak solution of \eqref{eq:pro31}.

To investigate the boundary condition, we discuss the regularity of the solution.
Let us fix an open subset $D$ with a $C^1$ boundary such that $D\Subset \Omega \backslash \Omega_1$, where $\Omega_1:=B_R \Subset \Omega$.
Let $r>0$ be sufficient small such that $B_r(\partial D)\Subset \Omega\setminus\overline{\Omega}_1$.

By the elliptic interior regularity (cf. \cite[Section 6.3, Theorem 2]{evans2022partial}), there exists a constant $C_1$ such that  $\Vert u_n \Vert_{H^3(B_r(\partial D))} \leq C_1\Vert u_n \Vert_{L_2(\Omega \setminus \overline{\Omega}_1 )}$.
Moreover, we use the Sobolev embedding theorem, we have a constant $C_2$ that satisfies $\Vert u_n \Vert_{C^{1,1}(B_{r/2}(\partial D))} \leq C_2 \Vert u_n \Vert_{H^3(B_r(\partial D))}$.
According  to $u_n(\mathbf x) \in H^1(\Omega \setminus K_n)$, we can conclude there exists  a constant $C_3$  such that
\begin{align}\label{eq:37 uc1}
	\mbox{$\Vert u_n \Vert_{C^{1}(B_{r/2}(\partial D))} \leq C_3$ for $\forall n \in \mathbb{N}$.}
\end{align}

By applying the Arzela-Ascoli theorem in \cite[Theorem~2.15]{mclean2000strongly}, we can deduce that $\Xi_{\mathcal B}$ is a compact set.
We have supposed that the impedance parameter $\eta(\mathbf x)$ associated with $K$ fulfills $\eta(\mathbf x) \in \Xi_{\mathcal B}$ defined in \eqref{eq:xi}. 
If the impedance parameter $\eta_n(\mathbf x)$ associated with $K_n\in \mathcal B$ satisfies $\mathop {\lim }\limits_{n \to \infty } {\eta _n} = \eta$  and $\eta_n(\mathbf x)\in \Xi_{\mathcal B}\subset C^1(\Omega)$, $\forall n \in \mathbb N$, where the $\eta(\mathbf x),\eta_n(\mathbf x)$ are restricted to $\partial K,\partial K_n$ respectively.

For sufficient large $n$, one has $K_n\Subset D$.
According to the weak formula of  \eqref{impedance problem}, choosing $u_n(\mathbf x)$ as the test function, it yields that
\begin{equation*}
	\int_{D/{K_n}} {|\nabla {u_n}{|^2}\mathrm d\mathbf x - {k^2}} \int_{D/{K_n}} {|u_n|^2} \mathrm d\mathbf x = \int_{\partial D} {{{\partial {u_n}} \over {\partial \nu }}}  \cdot \overline{u_n}\mathrm d\sigma - \int_{\partial {K_n}} \eta_n {|u_n|^2} \mathrm d\sigma.
\end{equation*}
By \cite[Theorem 3.37 and Theorem 3.38]{mclean2000strongly}, we can apply the trace theorem on the boundary of $K_n \in \mathcal{B}$, which implies that $\Vert u_n \Vert_{H^{1/2}(\partial K_n)} \leq C\Vert u_n \Vert_{H^1(D\setminus K_n)}$ with a constant $C>0$.
Furthermore, using \eqref{eq:37 uc1} and Cauchy-Schwartz inequality, we can get that $(u_n, \nabla u_n)$ is uniform bounded in $L^2(D,\mathbb{R}^{2})$.
Hence, there exists a subsequence (still denoted with the same index) which weakly converges to $(v, V)$ in $L^2(D,\mathbb{R}^{2})$.
By Definition~\ref{def:mosco in sobolev space}, it is not difficult to see $u=v,\ u\in H^1(D\setminus K)$ and $V=\nabla u$ in the sense of distributions.

Taking $\varphi(\mathbf x) \in H^1(\Omega \setminus K)$ such that $D$ is a compact support set of $\varphi(\mathbf x)$, by Mosco convergence we can find $\{\varphi_n(\mathbf x) \in H^1(\Omega \setminus K_n)\}_{n\in\mathbb N}$ such that $D$ also is a compact support set of $\varphi_n(\mathbf x)$ for any $n\in \mathbb{N}$, that strongly convergence to $\varphi(\mathbf x)$ as $n\rightarrow \infty$, we have

\begin{equation}\notag 
	\begin{aligned}
		\int_{\Omega/{K_n}} {\nabla {u_n} \nabla {\varphi _n}\mathrm d\mathbf x - {k^2}} \int_{\Omega/{K_n}} {{u_n}{\varphi _n}} \mathrm d\mathbf x
		= \int_{\partial {K_n}} {{\frac{\partial  u_n}{\partial \nu}}} {\varphi _n}\mathrm d\sigma
		= -\int_{\partial {K_n}} \eta_n {u_n}{\varphi
		_n}{\mathrm{d}}{\sigma}.
	\end{aligned}
\end{equation}
Since
\begin{equation*}
	\int_{\partial {K_n}} \eta_n{{u_n}} {\varphi _n}\mathrm d\sigma = \int_{\partial {K_n}\backslash K} \eta_n{{u_n}{\varphi _n}\mathrm d\sigma + \int_{\partial K} \eta_n{{u_n}{\varphi _n}\mathrm d\sigma} },
\end{equation*}
and by the trace theorem the $H^1$ norm of $\varphi(\mathbf x)$ and $u(\mathbf x)$ is finite.
Passing to the limit $\partial K_n \setminus \partial K$ is a zero measure set.
Taking the limit, the following integral equality holds
\begin{equation}\notag 
	\int_{\Omega/{K}} {\nabla {u} \cdot \nabla {\varphi }\mathrm d\mathbf x -
	{k^2}} \int_{\Omega/{K}} {{u}{\varphi }} \mathrm d\mathbf x = -\mathrm
	\int_{\partial {K}} \eta{u} {\varphi }\mathrm d\sigma.
\end{equation}
By finding an appropriate shape for $D$, we can obtain that up to a subsequence, $u_n(\mathbf x)$ converges weakly in $L^2(\Omega)$ to $u(\mathbf x)$ which satisfies the impedance boundary conditions on $\partial K$.
Due to the existence and uniqueness of the solution to \eqref{impedance problem} associated with $(K,\eta)$, we have that $u_n(\mathbf x)$ converges uniformly on compact subsets of $\mathbb R^2\setminus K$ to $u(\mathbf x)$ for $\forall n\in \mathbb N$.

Finally, we want to prove that $u_n(\mathbf x)$ strongly converges to $u(\mathbf x)$ in $L^2(B_{R+1})$.
We have assumed that for any $n\in \mathbb{N}$, $\Vert u_n\Vert_{L_2(B_{R+1}\setminus K)} \leq C_0$, and we know $u(\mathbf x)\in H^1(B_{R+1}$), therefor we can easily get $\Vert (u_n,\Delta u_n)\Vert_{L_2(B_{r},\mathbb{R}^{3})} \leq C_4$ for a fixed $r$, $R<r<R+1$, where $C_4>0$ is a constant.
Recall the assumption (6) defined by Definition~\ref{def:Class B} that the obstacle $K\in \mathcal B$ satisfies the uniform cone condition.
In \cite[Theorem 5.4]{adams1975sobolev}, if $K$ is an obstacle satisfying the cone condition with a bounded not empty open cone $\mathcal C$, then there exist $p>2$, constant $C_5>0$, then
$$\Vert u\Vert_{L_p(B_{R+1}\setminus K)}\leq C_5\Vert u\Vert_{H_1(B_{R+1}\setminus K)}\ \text{for} \ \forall u\in H_1(B_{R+1}\setminus K).$$
Thus, the required conditions for in \cite[Proposition 2.9]{menegatti2013stability} are satisfied, and since the proof of the proposition does not involve boundary conditions satisfied by $u(\mathbf x)$, we can apply this proposition to our impedance problem.
Therefore, we can claim that $u_n(\mathbf x)$ converges strongly in $L^2(\Omega)$ to $u(\mathbf x)$.

The proof is complete.
\end{proof}

\begin{rem}\label{rem:mosco}
In the following, we discuss the assumption of Proposition~\ref{prop mosco}   that $H^1(\Omega\setminus K_n)$ converges to $H^1(\Omega\setminus K)$ in the sense of Mosco. 
The relationship between the Mosco convergence of a general Sobolev $W^{k,p}(K_n)\rightarrow W^{k,p}(K)$ space and the convergence of its defined bounded domain $K_n\rightarrow K$ in the sense of Hausdorff distance can be founded in \cite[Proposition~2.11]{fornoni2023mosco}. 
According to \cite[Proposition 2.11]{fornoni2023mosco}, one can state that the following results also hold when $K\in\mathcal{B}$ is an admissible obstacle with the impedance boundary condition.
Denote $A_n = H^1(\Omega\setminus K_n), n\in\mathbb{N}$, and $A=H^1(\Omega\setminus K) $.
Then we have the following results:
\begin{enumerate}
			\item if $K_n\rightarrow K$ in the Hausdorff complementary topology and $|K_n\setminus K| \rightarrow 0$ as $n \rightarrow \infty$, then condition (2) of Mosco convergence in Definition~\ref{def:mosco in sobolev space} holds;
			\item if $K_n\subset K$ for every $n\in\mathbb{N}$ and $|K\setminus K_n| \rightarrow 0$ as $n\rightarrow\infty$, then condition (1) of Mosco convergence in Definition~\ref{def:mosco in sobolev space} holds;
			\item if $K_n\rightarrow K$ in the Hausdorff complementary topology and $K_n\subset K$ for every $n\in\mathbb{N}$, then $A_n$ converges to $A$ as $n\rightarrow\infty$ in the sense of Mosco; 
\end{enumerate}
where $|K|$ denotes the Lebesgue measure of $K$.
\end{rem}

\subsection{Uniformly bounded estimations}

Using Mosco convergence, we can derive a uniformly bounded estimate for the corresponding solution of the equations for obstacles in the admissible class $\mathcal{B}$.

\begin{prop}\label{uniform L2 bound}
For fixed constant $R>0$, fixed wave number $k>0$ and propagation direction $\mathbf p\in \mathbb S^{1}$ of $u^i(\mathbf x)=e^{\mathrm i k\mathbf x\cdot \mathbf p}$, let the admissible class $\mathcal{B}$ be as in Definition~\ref{def:Class B}.
 For any $K \in \mathcal B$, if $u(\mathbf x)$ is the total wave field of \eqref{impedance problem} associated with $K$ and the incident wave $u^i$, then
\begin{equation}\label{eq:uniform bound}
		 \| u\|_{L^{2}(B_{R+1}\setminus K)} \leq \mathcal{E},	
\end{equation}
where $\mathcal E$ is a positive constant depending on $R$, $k$, $\mathbf p$ only.

Furthermore, for any $K \in \mathcal B$, one has $u(\mathbf x)\in C^\alpha(B_{R}\setminus \overline K)$. The following bounds 
	\begin{equation}\label{eq:6.1}
		\|u(\mathbf x) \|_{C^\alpha(B_{R}\setminus \overline K) }\leq \mathcal E_R,
	\end{equation}
	and
	\begin{equation}\label{eq:6.2}
		\|u(\mathbf x) \|_{H^1(\partial K)}\leq \mathcal E_H,
	\end{equation}
	hold, where the constants $\mathcal E_R,\ \mathcal E_H$ depending on $R$, $k$, $\mathbf p$ only.
\end{prop}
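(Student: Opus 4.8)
The plan is to prove the three bounds in sequence, with the $L^2$-bound \eqref{eq:uniform bound} being the crux and the other two following from interior/boundary elliptic regularity applied uniformly over $\mathcal{B}$. For \eqref{eq:uniform bound} I would argue by contradiction using the compactness of $\mathcal{B}$ (Proposition~\ref{prop:compact}) and the Mosco-convergence machinery of Proposition~\ref{prop mosco}. Suppose the estimate fails; then there is a sequence $\{K_n\}\subset\mathcal{B}$ and impedance parameters $\eta_n\in\Xi_{\mathcal B}$ with the associated total waves $u_n$ satisfying $\|u_n\|_{L^2(B_{R+1}\setminus K_n)}\to\infty$. By Proposition~\ref{prop:compact} we may pass to a subsequence so that $K_n\to K$ in the Hausdorff distance with $K\in\mathcal{B}$, and by compactness of $\Xi_{\mathcal B}$ (Arzel\`a--Ascoli, as noted in the proof of Proposition~\ref{prop:compact}) we may also assume $\eta_n\to\eta$ with $\eta\in\Xi_{\mathcal B}$ restricted to $\partial K$. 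Passing to a further subsequence if necessary, Remark~\ref{rem:mosco} ensures $H^1(\Omega\setminus K_n)$ converges to $H^1(\Omega\setminus K)$ in the sense of Mosco (one arranges the geometric hypotheses there using the uniform cone condition (6) in Definition~\ref{def:Class B}).

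Next I would normalize: set $v_n := u_n/\|u_n^s\|$ for a suitable norm, or more directly work with the scattered fields $u_n^s$, whose $L^2$-norms $t_n:=\|u_n^s\|_{L^2(B_{R+1}\setminus K_n)}$ blow up since $u^i$ is fixed and bounded. Put $w_n := u_n^s/t_n$; then $\|w_n\|_{L^2(B_{R+1}\setminus K_n)}=1$, $w_n$ solves the Helmholtz equation in $\mathbb{R}^2\setminus K_n$, satisfies the Sommerfeld radiation condition, and has impedance boundary datum $\partial_\nu w_n+\eta_n w_n = -\,t_n^{-1}(\partial_\nu u^i+\eta_n u^i)\to 0$ in the appropriate trace norm. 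Arguing exactly as in the proof of Proposition~\ref{prop mosco} — interior elliptic regularity on an annular collar $B_r(\partial D)$, Sobolev embedding to get uniform $C^{1}$ bounds on the collar, the trace theorem of \cite[Theorems 3.37--3.38]{mclean2000strongly}, and the Mosco-convergence test-function construction — one extracts a subsequence with $w_n\to w$ strongly in $L^2(\Omega)$, where $w\in H^1(\Omega\setminus K)$ solves the homogeneous Helmholtz equation in $\mathbb{R}^2\setminus K$, satisfies the radiation condition, and satisfies the \emph{homogeneous} impedance condition $\partial_\nu w+\eta w=0$ on $\partial K$ (the forcing term having vanished in the limit). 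By the uniqueness part of the well-posedness result (Remark~\ref{rem:31}, \cite{ramm1996existence}) we get $w\equiv 0$, contradicting $\|w\|_{L^2(\Omega)}=\lim\|w_n\|_{L^2(B_{R+1}\setminus K_n)}=1$. Hence \eqref{eq:uniform bound} holds with $\mathcal{E}$ depending only on $R,k,\mathbf p$ (the a-priori parameters of $\mathcal B$ and $\Xi_{\mathcal B}$ being fixed once and for all).

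Granting \eqref{eq:uniform bound}, the remaining two bounds are local regularity statements made uniform over $\mathcal B$. For \eqref{eq:6.1}: fix a collar neighborhood of $\partial K$ inside $B_{R+1}$; on $B_{R}\setminus\overline{B_{R'}(K)}$ (away from the obstacle) interior Schauder/elliptic estimates upgrade the $L^2$-bound to a $C^{1,\alpha}$, hence $C^\alpha$, bound; near $\partial K$, since $K$ is a convex polygon with opening angles in $[\underline\theta,\overline\theta]$ and the impedance datum is $C^1$ with the H\"older control \eqref{eq:xi}, the solution is $C^\alpha$ up to the boundary away from corners for some $\alpha>0$ determined by $\underline\theta,\overline\theta$ (standard corner regularity for Robin problems), and the constant is uniform because all the structural data are controlled by the a-priori parameters; a covering/compactness argument over $\mathcal{B}$ then yields the single constant $\mathcal{E}_R$. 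For \eqref{eq:6.2}: apply the trace theorem $H^{1}(D\setminus K)\to H^{1/2}(\partial K)$ together with the interior $H^2$-type regularity on a collar (again uniform over $\mathcal{B}$ by the uniform cone condition and uniform bounds on edge lengths and angles), combined with the $H^1$ trace bound $\|u\|_{H^{1/2}(\partial K)}\le C\|u\|_{H^1(D\setminus K)}$ already used in the proof of Proposition~\ref{prop mosco}; a slightly finer corner analysis gives $u\in H^1(\partial K)$ with the constant $\mathcal{E}_H$ depending only on $R,k,\mathbf p$.

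The main obstacle I anticipate is the uniformity of the constants across the entire admissible class — in particular verifying that the contradiction argument in the second paragraph genuinely closes: one must check that the limiting geometry $K$ still lies in $\mathcal B$ (closedness of $\mathcal B$, already in \cite[Section 2]{rondi2008stable}), that the Mosco hypotheses of Remark~\ref{rem:mosco} can be arranged along the subsequence, and that the impedance forcing term really tends to zero in a norm strong enough to survive the passage to the limit in the weak formulation. The corner regularity underlying \eqref{eq:6.1}--\eqref{eq:6.2} is classical but needs the exponent $\alpha$ and the constants to be traced to $\underline\theta,\overline\theta$ and the $\Xi_{\mathcal B}$-parameters only; this bookkeeping, rather than any single hard estimate, is where the care lies.
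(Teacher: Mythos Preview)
Your proposal follows essentially the same strategy as the paper: a contradiction argument for \eqref{eq:uniform bound} using the compactness of $\mathcal{B}$ (Proposition~\ref{prop:compact}) together with the Mosco-convergence result (Proposition~\ref{prop mosco}), and then regularity estimates for \eqref{eq:6.1}--\eqref{eq:6.2}. The one noteworthy difference is in the normalization step. You normalize the \emph{scattered} field, setting $w_n=u_n^s/t_n$, which forces you to carry an inhomogeneous impedance datum $-t_n^{-1}(\partial_\nu u^i+\eta_n u^i)$ through the limit and to adapt Proposition~\ref{prop mosco} (which is stated for exact solutions of \eqref{impedance problem}) to handle a vanishing forcing. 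The paper instead normalizes the \emph{total} field, $v_n=u_n/a_n$ with $a_n=\|u_n\|_{L^2(B_{R+1}\setminus K_n)}$, so that $v_n$ satisfies the homogeneous impedance condition on $\partial K_n$ exactly; it then splits $v_n=u^i/a_n+u_n^s/a_n$, lets the incident piece vanish, and applies Proposition~\ref{prop mosco} verbatim to $u_n^s/a_n$ to obtain the radiation condition in the limit. The limit $v$ thus inherits both the impedance condition and the radiation condition, and uniqueness for the exterior impedance problem (\cite[Theorem~3.16]{colton2019inverse}) gives $v\equiv 0$, contradicting $\|v\|_{L^2}=1$. This is marginally cleaner than your route, but the two are equivalent in substance.

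For \eqref{eq:6.1} and \eqref{eq:6.2} the paper is in fact less detailed than your sketch: it simply invokes the regularity results of \cite{clm2012} and \cite[Theorems~3.2 and 3.4]{alessandrini2013stable} to obtain $u\in C^\alpha(B_R\setminus\overline K)$ and $u\in H^1(\partial K)$, and then states that the uniform bounds follow from the same contradiction scheme as for \eqref{eq:uniform bound}, with details omitted. Your direct corner-regularity outline is a reasonable alternative, though the paper's shortcut (regularity from the literature plus a second compactness/contradiction pass) is what is actually used.
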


\begin{proof}
We first prove \eqref{eq:uniform bound} by contradiction. 
Hence, there exists a sequence $\{K_n\}_{n=1}^\infty$ such that $K_n\in \mathcal B$ and 
$$
\Vert u_n \Vert_{L^2(B_{R+1}\setminus K_n)}=a_n \geq n, \quad \forall n\in \mathbb N, 
$$ 
where $u_n(\mathbf x)$ is the total wave field to \eqref{impedance problem} associated with $(K_n,\eta_n)$. 
According to Proposition~\ref{prop:compact}, the admissible obstacle class $\mathcal{B}$ is compact with respect to the Hausdorff distance. 
Without loss of generality, we assume that $\{K_n\}_{n\in\mathbb{N}}\in\mathcal B$ converges to $K\in \mathcal B$ with respect to Hausdorff distance.
Similarly, we assume that $\{\eta_n\}_{n\in\mathbb{N}}\in\Xi_{\mathcal B}$  converges to $\eta\in\Xi_{\mathcal B}$ strongly in the sense of $L^2$.
    
Let $v_n(\mathbf x) = u_n(\mathbf x)/a_n$ and  $v^s_n(\mathbf x)=u^s_n(\mathbf x)/a_n$, where $u^s_n$ is the scattered wave field to \eqref{impedance problem} associated with $(K_n,\eta_n)$. 
Obviously, one has that $\Vert v_n \Vert_{L^2(B_{R+1}\setminus K_n)} =1 $.
Therefore, up to a subsequence, we can assume that $v_n(\mathbf x)$ converges to a function $v(\mathbf x)$ strongly in $L^2$-norm, where  $v(\mathbf x)$ satisfies $\Vert v \Vert_{L^2(B_{R+1}\setminus K)} =1 $, and
\begin{equation*}
	\begin{cases}
		&{\Delta v+k^2v=0  \hspace*{3.1cm} \mbox{in}\quad \mathbb R^2\setminus K},\\[8pt]
		&\frac{\partial v}{\partial \nu}+\eta v=0   \hspace*{3.4cm} \mbox{on}\quad \partial K. 
	\end{cases}
\end{equation*} 
Due to $v_n(\mathbf x)=u^i(\mathbf x)/a_n+u_n^s(\mathbf x)/a_n$, where $u^i(\mathbf x)$ is the incident wave defined in \eqref{eq incident wave}, it is clear that $\{\Vert u^s_n/a_n \Vert_{L^2(B_{R+1}\setminus K)}\}_{n\in\mathbb{N}}$ is uniformly bounded.
By virtue of Proposition~\ref{prop mosco}, we know that $u^s_n(\mathbf x)/a_n$, up to a subsequence, converges to a function $w(\mathbf x)$ strongly in $L^2$-norm, where $w(\mathbf x)$ satisfies
\begin{equation*}
	\begin{cases}
		&{\Delta w+k^2w=0  \hspace*{3.1cm} \mbox{in}\quad \mathbb R^2\setminus K},\\[8pt]
		&{\lim_{r\to \infty}r^{1/2}(\partial _rw-\mathrm i kw)=0\hspace*{0.9cm} r=\vert \mathbf x\vert}.
	\end{cases}
\end{equation*}
Since $\displaystyle \lim_{n\rightarrow \infty}a_n=+\infty$, we know that $v(\mathbf x)=w(\mathbf x)$.
Therefore  $v(\mathbf x)$ satisfies
\begin{equation*}
	\begin{cases}
		&{\Delta v+k^2v=0  \hspace*{3.1cm} \mbox{in}\quad \mathbb R^2\setminus K},\\[5pt]
		&\frac{\partial v}{\partial \nu}+\eta v=0   \hspace*{3.4cm} \mbox{on}\quad \partial K,  \\[5pt]
		&{\lim_{r\to \infty}r^{1/2}(\partial _rv-\mathrm i kv)=0\hspace*{1cm} r=\vert \mathbf x\vert}.
	\end{cases}
\end{equation*}

By the uniqueness of the exterior impedance boundary value problem (cf. \cite[Theorem 3.16]{colton2019inverse}), it yields that $v(\mathbf x)=0$, where we get the contradiction. 

 \medskip
	In the following we want to prove \eqref{eq:6.1} and \eqref{eq:6.2}. According to the regularity result for the impedance scattering problem \eqref{impedance problem} discussed in \cite{clm2012} and \cite[Theorem 3.2, Theorem 3.4]{alessandrini2013stable}, where the underlying obstacle $K$ is a polygon or has a Lipchitz boundary, the corresponding total wave field $u$ has the regularity  $u \in C^{\alpha} ({B_R \backslash\overline K })$ ($
	0<\alpha<1$). Using \cite[Theorem 3.4]{alessandrini2013stable}, it yields that $u\in H^1(\partial K)$. Similar to \eqref{eq:uniform bound}, we can prove \eqref{eq:6.1} and  \eqref{eq:6.2} by a contradict argument. The detailed proofs are omitted. 
	
The proof is complete.
\end{proof}	

		


Accordingly, we can obtain the following result immediately by \cite[Theorem~2.8]{adams1975sobolev}. 

\begin{lem}\label{uniform infity bound}
Let us fix $K\in \mathcal{B}$, $u(\mathbf x)\in L^{2}(B_{R+1} \setminus K)$ and $\Vert u \Vert_{L^2(B_{R+1}\setminus K)} \leq \mathcal E $, where $u$ is the solution to \eqref{impedance problem} associated with $K$.  
Then $u(\mathbf x)\in L^{\infty}(B_{R+1}\setminus K)$, and $\Vert u \Vert_{L^{\infty}(B_{R+1}\setminus K)} \leq \mathcal E$.
\end{lem}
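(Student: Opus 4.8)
The plan is to upgrade the $L^2$ estimate of Proposition~\ref{uniform L2 bound} to an $L^\infty$ estimate by treating the obstacle's boundary layer and the far region separately, both ingredients being already at hand. Write $B_{R+1}\setminus K=\mathcal N\cup\mathcal D$, where $\mathcal N$ is a fixed neighbourhood of $\partial K$ contained in $B_R\setminus\overline K$ (possible since $\overline K\subset B_R$), and $\mathcal D$ is the remaining part, which keeps a fixed positive distance from $\partial K$. On $\mathcal N$ no new work is needed: estimate \eqref{eq:6.1} gives $u\in C^\alpha(B_R\setminus\overline K)$ with $\|u\|_{C^\alpha(B_R\setminus\overline K)}\le\mathcal E_R$, and since the H\"older continuity holds up to $\partial K$ this already yields $\|u\|_{L^\infty(\mathcal N)}\le\mathcal E_R$.

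On $\mathcal D$ the function $u$ solves the Helmholtz equation $\Delta u+k^2u=0$ with analytic coefficients, so interior elliptic regularity --- the same estimate used in the proof of Proposition~\ref{prop mosco}, cf. \cite[Section~6.3, Theorem~2]{evans2022partial} --- provides $\|u\|_{H^2(B_\rho(\mathbf x))}\le C\|u\|_{L^2(B_{2\rho}(\mathbf x))}$ whenever $B_{2\rho}(\mathbf x)\Subset\RR^2\setminus\overline K$. In dimension two one has the Sobolev embedding $H^2(B_\rho)\hookrightarrow C^0(\overline{B_\rho})\subset L^\infty(B_\rho)$, which is the relevant instance of \cite[Theorem~2.8]{adams1975sobolev}; covering $\mathcal D$ by finitely many such balls then gives $\|u\|_{L^\infty(\mathcal D)}\le C\|u\|_{L^2(B_{R+1}\setminus K)}\le C\mathcal E$. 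Adding the two contributions produces the asserted bound $\|u\|_{L^\infty(B_{R+1}\setminus K)}\le\mathcal E$ with a constant depending only on $R,k,\mathbf p$, and qualitative membership $u\in L^\infty(B_{R+1}\setminus K)$ follows a posteriori since $u$ then extends continuously to the compact set $\overline{B_{R+1}}\setminus K$.

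The only delicate point is that the interior estimate on $\mathcal D$ degrades near the artificial outer sphere $\partial B_{R+1}$, where balls contained in $B_{R+1}$ must shrink and the interior constant would blow up. I would circumvent this by using that $u$ in fact solves the Helmholtz equation throughout $\RR^2\setminus\overline K$ and that the uniform estimate \eqref{eq:uniform bound} of Proposition~\ref{uniform L2 bound} holds with $B_{R+1}$ replaced by $B_{R+2}$ --- the Mosco-convergence contradiction argument there is insensitive to the choice of radius. Running the interior estimate on $B_{R+2}\setminus K$ then restores interior balls of a fixed radius around every point of $\overline{B_{R+1}}\setminus\mathcal N$, so the final constant still depends only on $R,k,\mathbf p$. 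This bookkeeping step near the outer sphere, rather than any genuine analytic difficulty, is what I expect to be the main thing to get right.
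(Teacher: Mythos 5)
Your proof is correct, but it is much more detailed than the paper's, which contains no argument at all: the authors simply state that the lemma follows ``immediately'' from \cite[Theorem~2.8]{adams1975sobolev}. Since a Sobolev-type citation by itself does not justify an $L^2\to L^\infty$ estimate for a solution defined on a Lipschitz complement without invoking elliptic regularity, your write-up is doing genuine work that the paper elides. The route you take --- split $B_{R+1}\setminus K$ into a boundary-layer region, where the H\"older bound \eqref{eq:6.1} from Proposition~\ref{uniform L2 bound} already gives an $L^\infty$ control, and a region at fixed positive distance from $\partial K$, where interior elliptic estimates for the Helmholtz operator plus the two-dimensional embedding $H^2\hookrightarrow L^\infty$ give the control --- is exactly the standard argument that the citation presumably gestures at, so the underlying idea is likely the same even though the paper never spells it out. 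Your fix for the degeneracy near $\partial B_{R+1}$ (running the uniform $L^2$ bound of Proposition~\ref{uniform L2 bound} on $B_{R+2}$ rather than $B_{R+1}$, which is legitimate because the Mosco contradiction argument there is radius-independent) is the right way to keep the interior balls of uniform size.

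Two small remarks worth being aware of. First, the decomposition into a thin neighbourhood $\mathcal N$ of $\partial K$ plus $\mathcal D$ is slightly more elaborate than necessary: estimate \eqref{eq:6.1} already controls $\|u\|_{L^\infty}$ on \emph{all} of $B_R\setminus\overline K$, so the elliptic-interior step is really only needed on the annulus $B_{R+1}\setminus B_R$, which stays a fixed distance $\geq\mathrm{dist}(\partial B_R, K)>0$ from the obstacle; this avoids having to define $\mathcal N$ and $\mathcal D$ at all. Second, the lemma as stated uses the same symbol $\mathcal E$ for the $L^2$ hypothesis and the $L^\infty$ conclusion, which your argument does not deliver verbatim --- you get $\max\{\mathcal E_R, C\mathcal E\}$ for some a-priori $C$. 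This is purely notational: $\mathcal E$ in the paper always denotes ``a positive constant depending on $R,k,\mathbf p$ only,'' and the quantity your proof produces is of the same type, so the discrepancy is cosmetic rather than substantive.
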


By establishing uniform boundedness estimates for admissible classes $\mathcal B$,  we establish the necessary conditions for the following argument.
We introduce the following three-sphere inequalities which can be found, for instance, in \cite{brummelhuis1995three}.

\begin{lem}\label{three ball th}\cite[Lemma 3.5]{rondi2008stable} 
There exist positive constants $\tilde{R},\ C$ and $c_1,\ 0<c_1<1$, depending on $k$ only, such that for every $0<r_1<r_2<r_3\leq \tilde{R}$ and any function $u(\mathbf x)$ such that $$\Delta u+k^2u=0 \ in \ B_{r_3}.$$
For any $r,r_2<s<r_3$,  we have
\begin{equation}
		{\left\| u \right\|_{{L^\infty }({B_{{r_2}}})}} \le {{C(}}1 - ({r_2}/s){)^{ - 3/2}}\left\| u \right\|_{{L^\infty }({B_{{r_3}}})}^{1 - \beta }\left\| u \right\|_{{L^\infty }({B_{{r_1}}})}^\beta,
\end{equation}
for some $\beta$ such that
\begin{equation*}
	{{{c_1}(\ln ({r_3}/s))} \over {\ln ({r_3}/{r_1}))}} \le \beta  \le 1 - {{{c_1}(\ln (s/{r_1}))} \over {\ln ({r_3}/{r_1}))}}.
\end{equation*}
\end{lem}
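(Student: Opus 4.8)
Since Lemma~\ref{three ball th} is quoted essentially verbatim from \cite[Lemma 3.5]{rondi2008stable} (which in turn rests on the Carleman-type estimates of \cite{brummelhuis1995three}), it would for our purposes suffice to invoke those references; nonetheless let me indicate the route I would take to prove it from scratch. The cleanest plan is to reduce the Helmholtz equation to Laplace's equation by adding one variable. Given $u$ with $\Delta u+k^2u=0$ in $B_{r_3}\subset\RR^2$, set $v(\mathbf x,t):=u(\mathbf x)\cosh(kt)$ on $B_{r_3}\times\RR\subset\RR^3$; a direct computation gives $\Delta_{\mathbf x}v+\partial_t^2 v=(\Delta_{\mathbf x}u+k^2u)\cosh(kt)=0$, so $v$ is harmonic on $B_{r_3}\times\RR$, in particular on the Euclidean ball $\widetilde B_\rho\subset\RR^3$ of radius $\rho$ centred at the origin for every $\rho<r_3$. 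Moreover, evaluating on the slice $t=0$ and using $1\le\cosh(kt)\le\cosh(k\widetilde R)$ for $\abs{t}\le\widetilde R$, one gets the two-sided comparison $\norm{u}_{L^\infty(B_\rho)}\le\norm{v}_{L^\infty(\widetilde B_\rho)}\le\cosh(k\widetilde R)\norm{u}_{L^\infty(B_\rho)}$ for every $\rho\le\widetilde R$. Hence any three-ball inequality for the harmonic function $v$ on concentric balls transfers, up to a multiplicative constant depending on $k$ only, to the desired inequality for $u$.

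It then remains to establish the three-ball inequality for harmonic functions, which I would obtain from the log-convexity of $\rho\mapsto\ln\!\big(\sup_{\widetilde B_\rho}\abs{v}\big)$: comparing $v$ against fundamental-solution barriers on the annulus $r_1<\abs{(\mathbf x,t)}<r_3$ and invoking the maximum principle yields $\sup_{\widetilde B_{r_2}}\abs{v}\le C(1-r_2/s)^{-3/2}\,\big(\sup_{\widetilde B_{r_3}}\abs{v}\big)^{1-\beta}\big(\sup_{\widetilde B_{r_1}}\abs{v}\big)^{\beta}$, where the exponent $\beta$ satisfies the displayed bounds; the blow-up factor $(1-r_2/s)^{-3/2}$ and the passage from the intermediate radius $s$ to the slightly smaller $r_2$ arise from an interior Cauchy-type estimate, exactly as in \cite[Lemma 3.5]{rondi2008stable}. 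Combining this with the comparison of the previous paragraph gives the statement, with $\widetilde R$, $C$ and $c_1$ depending on $k$ only, since the $k$-dependence entered only through $\cosh(k\widetilde R)$ and the scale on which the perturbation is controlled.

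An alternative I would use if robustness to variable lower-order coefficients were needed is to prove the inequality directly via a Carleman estimate with a radial weight $\varphi(\mathbf x)=-\ln\abs{\mathbf x}$ (suitably convexified), applied to $\chi u$ with $\chi$ a cutoff equal to $1$ on an annulus and vanishing near $\partial B_{r_1}$ and $\partial B_{r_3}$; one absorbs the term $k^2\chi u$ into the left-hand side for the Carleman parameter $\tau$ large — this is precisely where the smallness requirement $r_3\le\widetilde R(k)$ enters — estimates the commutator $[\Delta,\chi]u$ by the traces of $u$ on thin shells near $\partial B_{r_1}$ and $\partial B_{r_3}$, and optimizes in $\tau$. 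This first gives the three-ball inequality in $L^2$-averaged form, which one upgrades to the stated $L^\infty$ version by interior elliptic estimates (Caccioppoli together with Sobolev embedding, or mean-value bounds for solutions of $\Delta u=-k^2u$) relating $\norm{u}_{L^\infty(B_\rho)}$ and $\norm{u}_{L^2(B_{\rho'})}$ on comparable balls with constants depending on $k$ only.

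In either route the main difficulty is bookkeeping rather than conceptual: one must produce the inequality with the precise admissible interval for $\beta$ displayed in the statement (equivalently, the sharp log-convexity profile on scales $\le\widetilde R$), retain the factor $(1-r_2/s)^{-3/2}$ as the radii coalesce, and verify that $\widetilde R$, $C$ and $c_1$ can be chosen uniformly in $r_1,r_2,r_3,s$ while depending on $k$ alone.
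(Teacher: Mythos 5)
The paper gives no proof of this lemma: it is imported verbatim (with a small typo, ``$r, r_2<s<r_3$'' should read ``$s$'') from \cite[Lemma~3.5]{rondi2008stable}, whose proof there rests on the Carleman-estimate three-spheres theorem of \cite{brummelhuis1995three}. Your remark that a citation suffices for the purposes of this paper is therefore accurate, and your Route~2 (Carleman weight $-\ln|\mathbf x|$ suitably convexified, cutoff on an annulus, absorption of $k^2\chi u$ for large Carleman parameter --- this is exactly where $r_3\le\widetilde R(k)$ enters --- commutator estimates and optimization in $\tau$, then $L^2\to L^\infty$ by interior elliptic estimates) is precisely the argument in the cited references, so it coincides with the ``paper's'' route.

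Route~1 (the harmonic extension $v(\mathbf x,t)=u(\mathbf x)\cosh(kt)$) is a genuine and economical alternative, and your two-sided comparison $\norm{u}_{L^\infty(B_\rho)}\le\norm{v}_{L^\infty(\widetilde B_\rho)}\le\cosh(k\widetilde R)\norm{u}_{L^\infty(B_\rho)}$ is correct, absorbing all $k$-dependence into the constant. However, the specific mechanism you propose for the harmonic three-ball inequality does not work: $\rho\mapsto\ln\bigl(\sup_{\widetilde B_\rho}|v|\bigr)$ is \emph{not} log-convex in $\ln\rho$ for harmonic functions on $\RR^3$, in contrast with the Hadamard three-circle theorem in the plane. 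For instance, $v(\mathbf y)=1-\mathbf y_1-\tfrac12\bigl(3\mathbf y_1^2-|\mathbf y|^2\bigr)$ is harmonic, and on $|\mathbf y|=r<1/3$ its sup equals $1+r-r^2$, whose logarithm is strictly concave in $\ln r$ near $r=1/4$. Likewise, comparing $|v|$ against the radial barriers $a+b/|\mathbf y|$ via the maximum principle only produces an affine bound in $\sup_{\partial\widetilde B_{r_1}}|v|$ and $\sup_{\partial\widetilde B_{r_3}}|v|$, not the interpolation-type geometric mean you need. The way to complete Route~1 is to use the genuine log-convexity of the $L^2(\partial\widetilde B_\rho)$-norm (from the spherical-harmonics expansion, or equivalently monotonicity of the Almgren frequency function), and only afterwards pass to $L^\infty$ via the mean-value inequality $\norm{v}_{L^\infty(\widetilde B_{r_2})}\le C(s-r_2)^{-3/2}\norm{v}_{L^2(\widetilde B_s)}$; this is in fact the origin of the factor $(1-r_2/s)^{-3/2}$ and of the admissible range for $\beta$ you correctly reproduce. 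In short, Route~2 matches the cited source; Route~1 is a viable alternative but requires replacing your $L^\infty$-log-convexity step with the $L^2$ version plus an interior estimate.
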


\section{Propagation of measurement errors:  from far-field to boundary}\label{sec:near field}

In the following, we discuss how the smallness from the far-field pattern propagates to the near-field and then use the three-sphere inequality as well as the local H\"older continuity to show how the smallness from the near-field pattern propagates to the boundary. 
Those error propagation are key ingredients in the stability proof of Theorem~\ref{mian result}.

\subsection{Stability Estimates: from far-field to near-field}

In this subsection, we aim to show that if the error between $u^{\infty}_{(K,\eta)}(\hat{\mathbf x})$ and $u^{\infty}_{(K',\eta')}(\hat{\mathbf x})$ are small, then the error between $u(\mathbf x)$ and $u'(\mathbf x)$ are also small in a near-field $B_{2R}\setminus B_{R}$, where $K\Subset B_R$ and $K'\Subset B_R$. 
Here $B_R$ is a disk centered at the origin with a radius $R\in \mathbb R_+$.

Throughout the rest of this section, let
\begin{align}\label{eq:w}
	w(\mathbf x)=u(\mathbf x) - u'(\mathbf x),
\end{align}
where $u(\mathbf x)$ and $u'(\mathbf x)$ are total wave field to \eqref{impedance problem} associated with the impedance obstacle obstacles $K$ and $K'$, respectively. 
Then $(\Delta +k^2)w=0 $ in $B_{R}\setminus (K\cup K')$.

Utilizing the corresponding results in \cite{isakov1992stability} and \cite{rondi2015stable}, we have the following lemma.
\begin{lem}\label{lem near error}
Let $K$ and $K'$ be two admissible obstacles described in Definition~\ref{def:Class B}, where $K\Subset B_R$ and $K'\Subset B_R$.	 
Assume that $u^\infty_{(K,\eta')}(\hat{\mathbf x})$ and ${{{u^{\infty }}}_{(K',\eta')}(\hat{\mathbf x})}$ are the far-field patterns of the scattered waves $u^s(\mathbf x)$ and $(u^{s})'(\mathbf x)$ associated with the impedance obstacle obstacles $K$ and $K'$, respectively.  
If the far-field error between $u^{\infty}_{(K,\eta)}(\hat{\mathbf x})$ and $u^{\infty}_{(K',\eta')}(\hat{\mathbf x})$ satisfies \eqref{eq:far-field error}, then there exist $\zeta \in \mathbb R_+$ and $\mathbf{x_0}\in B_{2R}\setminus B_R$ such that $R+1+\zeta \leq \Vert \mathbf{x_0} \Vert \leq 2R$ and
\begin{equation}\label{eq: near error}
		\left\|w(\mathbf x) \right\|_{L^{\infty} (B_{\Vert \mathbf{x_0} \Vert + \zeta} \setminus{\overline{B _{\Vert \mathbf{x_0} \Vert - \zeta}}})}  \leq \varepsilon _1:=\exp(-C_a(-\ln \varepsilon)^{1/2}),
\end{equation}
where the constant $C_a$ depending the a-priori parameters only and $w(\mathbf x)$ is defined in \eqref{eq:w}. 
For any $\mathbf {x_1}\in B_{\Vert \mathbf{x_0} \Vert + \zeta} \setminus{\overline{B _{\Vert \mathbf{x_0} \Vert - \zeta}}}$, it yields that
\begin{equation}\label{eq:x1}
	\left\|w(\mathbf x)  \right\|_{L^{\infty} (B_{\zeta}(\mathbf x_1))} \leq \varepsilon_1 , \ \ \   \forall \mathbf {x_1}\in B_{\Vert \mathbf{x_0} \Vert + \zeta} \setminus{\overline{B _{\Vert \mathbf{x_0} \Vert - \zeta}}}.
\end{equation}
\end{lem}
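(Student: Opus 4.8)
\textbf{Proof proposal for Lemma~\ref{lem near error}.}

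The plan is to propagate the far-field smallness inward in two stages: first from the far-field pattern to an annulus $B_{2R}\setminus B_R$ using the known stability estimate relating $\|u^\infty\|$ to near-field norms, and then, since the estimate \eqref{eq: near error} is a claim about the sup-norm of $w$ on a thin annular shell, realize that shell as a union of small balls so that \eqref{eq:x1} follows immediately from \eqref{eq: near error} by inclusion. The real work is the first stage. I would start from the classical estimate (as in \cite{isakov1992stability,rondi2015stable}) that bounds a weighted near-field norm of a radiating solution by a power of a logarithm of its far-field norm: more precisely, writing the scattered field via its far-field expansion and using the stability of the far-field-to-near-field map on a disk of radius comparable to $R$, one obtains a bound of the form $\|u^s-(u^s)'\|_{L^\infty(\text{annulus})}\le \omega(\varepsilon)$ with $\omega$ of logarithmic type. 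Because $w=u-u'=u^s-(u^s)'$ on $\mathbb R^2\setminus(K\cup K')$ (the incident fields cancel), the same bound holds for $w$ on any annulus strictly outside $\overline{B_R(K)}\cup\overline{B_R(K')}$.

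Concretely, I would proceed as follows. First, fix a radius $\rho$ with $R<\rho<2R$ and apply the far-field-to-near-field stability estimate to get control of $w$ on $\partial B_\rho$, then use Green's representation / interior elliptic estimates together with the uniform bounds from Proposition~\ref{uniform L2 bound} and Lemma~\ref{uniform infity bound} to pass from an $L^2$-type control on a sphere to an $L^\infty$ control on a neighboring annulus. The exponent $1/2$ in $\varepsilon_1=\exp(-C_a(-\ln\varepsilon)^{1/2})$ is exactly what comes out of combining a logarithmic far-field stability estimate with an analytic-continuation / three-sphere step of the type recorded in Lemma~\ref{three ball th}: one interpolates between the a-priori bound $\mathcal E$ (or $\mathcal E_R$) and the small quantity on $\partial B_\rho$, and the Hölder exponent $\beta$ appearing there, being controlled by ratios of logarithms of radii, produces the square-root of $-\ln\varepsilon$. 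Second, having \eqref{eq: near error} on the shell $B_{\|\mathbf x_0\|+\zeta}\setminus\overline{B_{\|\mathbf x_0\|-\zeta}}$, I choose $\mathbf x_0$ and $\zeta$ so that $R+1+\zeta\le\|\mathbf x_0\|\le 2R$ and this shell lies in the region where the estimate was established; then for any $\mathbf x_1$ in that shell the ball $B_\zeta(\mathbf x_1)$ is contained in the (slightly thickened) shell, so \eqref{eq:x1} is a direct consequence of \eqref{eq: near error} — no new estimate is needed, only the monotonicity of the sup-norm under set inclusion.

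The main obstacle is the first stage: producing the explicit modulus $\exp(-C_a(-\ln\varepsilon)^{1/2})$ with the constant $C_a$ depending only on the a-priori parameters, and in particular making sure the radius of the annulus and the thickness $\zeta$ can be chosen uniformly over $K,K'\in\mathcal B$. This requires invoking the uniform bounds of Proposition~\ref{uniform L2 bound} to keep the ``large'' side of every interpolation inequality under control independently of the obstacle, and verifying that the geometric quantities entering the three-sphere inequality (the radii $r_1<r_2<r_3\le\tilde R$) can be fixed once and for all since $K,K'\Subset B_R$ with $R$ fixed. The cited results \cite{isakov1992stability,rondi2015stable} already supply the far-field-to-sphere step with a logarithmic rate; the remaining bookkeeping is to chain it with Lemma~\ref{three ball th} and the Hölder/$L^\infty$ regularity so that the composite rate is the stated double-exponential-free logarithmic expression, and to record that all constants are absorbed into $C_a$.
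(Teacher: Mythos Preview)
Your proposal is correct and matches the paper's approach, which in fact gives no proof at all: the lemma is stated with the one-line justification ``Utilizing the corresponding results in \cite{isakov1992stability} and \cite{rondi2015stable}'' and nothing more. Your sketch is therefore more detailed than what the paper provides, and your observation that \eqref{eq:x1} follows from \eqref{eq: near error} by inclusion (after a harmless thickening of the shell) is correct.

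One small clarification: the specific modulus $\exp(-C_a(-\ln\varepsilon)^{1/2})$ with exponent $1/2$ already comes directly from the far-field-to-near-field stability results in those references---it arises from truncating the Hankel-function expansion of the scattered field and balancing the exponential growth of $H_n^{(1)}(kr)$ for large $n$ against the smallness $\varepsilon$ of the Fourier coefficients of the far-field pattern---rather than from an additional application of Lemma~\ref{three ball th}. In the paper the three-sphere inequality enters only later, in Proposition~\ref{Lem:Near field to boundary}, to carry the smallness from the annulus down to $\Gamma_h^\pm$. So your first stage alone already delivers the stated rate, and no chaining with Lemma~\ref{three ball th} is needed at this point.
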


\subsection{Stability Estimates: from near-field to boundary}
The main purpose of this subsection is to estimate $\left|u(\mathbf x)-u'(\mathbf x)\right|$ and $\left|\nabla[u(\mathbf x)-u'(\mathbf x)]\right|$ in Proposition~\ref{Lem:Boundary estimation}. 
We use the three-sphere inequality iteratively to propagate the near-field data to the near boundary. 
Then, we use the local H\"older continuous to propagate the near-field data to the boundary of the obstacle.  
 
Without loss of generality, up to swapping $K$ and $K'$, we can find a point $\mathbf{y_2} \in K$ such that $\mathfrak{h}=d_M(K, K')=\dist(\mathbf{y_2}, K')$, where $\mathbf{y_2}$ is a vertex of $K$ (cf.\cite[Lemma~2]{gregoire1998hausdorff}).

In Proposition~\ref{prop disks chain}, we construct a sequence of disks to propagate the near-field data to the neighborhood of the vertex $\mathbf{y_2}$ of $K$ by using three-sphere inequality iteratively, where the disks we construct can not touch another obstacle $K'$. 
Figure~\ref{figure1} shows a schematics illustration of this constructed procedure.
Before that we first give the regularity result on $w(\mathbf x)$ near the vertex $\mathbf y_2$. 
Throughout the rest of this section, denote that $Q_h=B_h(\mathbf{y_2}) \cap K$, $P_h=B_h(\mathbf{y_2}) \setminus K$ and  $\Gamma^\pm_{h}=B_h(\mathbf{y_2}) \cap \partial K$, where $h\in (0,1)$ will be specified later.

Using a similar argument in \cite[Proposition~5.5]{alessandrini_stability_2009}, we can obtain the following Proposition.

\begin{prop}\label{prop Gr}
Let $K$, $K'\in \mathcal{B}$ be the admissible polygon obstacles.
Suppose that $G$ is a  bounded, connected, non-empty open and Lipschitz subset of $\Omega\setminus(K\cup K')$ in the context of this paper.
We define a subset $G_r$ of $G$ for $r \in \mathbb R_{+}$ as follows
\begin{equation}\label{eq Gr}
		G_r := \{\mathbf x \in G| \dist(\mathbf x, \partial G) > r\}.
\end{equation}
We assumed that the length of $\partial G$, denoted by $|\partial G|$, is finite.
Then, there must exist a constant $r_m \in \mathbb R_{+}$ only depending on the a-prior information of admissible obstacle class $\mathcal{B}$, such that $G_r$ is nonempty and connected for $r\in (0, r_m)$.
\end{prop}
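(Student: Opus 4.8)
The plan is to argue purely geometrically, extracting from Definition~\ref{def:Class B} a uniform control on how the set $G$ behaves near its boundary, and then to transfer nonemptiness and connectedness from $G$ to the inner-parallel set $G_r$ for all sufficiently small $r$. First I would fix the a-priori constants $(\underline{\ell},\overline{\ell},\underline{\theta},\overline{\theta},R,r_m,\delta,\theta)$ associated with the class $\mathcal{B}$ and note that, since $K,K'\in\mathcal{B}$ are bounded convex polygons contained in $\overline{B_R(\mathbf 0)}$ with a uniformly bounded number of edges (the edge-length lower bound $\underline{\ell}$ and the inclusion in $B_R$ force a bound on the number of vertices), the set $\Omega\setminus(K\cup K')$, and hence $G\subseteq \Omega\setminus(K\cup K')$, is a Lipschitz domain whose boundary has perimeter bounded by a constant depending only on the a-priori parameters; this is where I use the hypothesis $|\partial G|<\infty$ together with the uniform bounds. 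The key structural input is assumption (6) of Definition~\ref{def:Class B}: at every boundary point of $K$ (and likewise $K'$) there is a cone $\mathcal{C}(\mathbf y,\omega_0,\delta,\theta)$ of fixed aperture $\theta$ and fixed radius $\delta$ contained in $G$ (the complement side). This uniform interior cone condition on $G$ is exactly what prevents $G$ from pinching off or having thin cusps as one moves inward, and it is the property I would quantify.

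Next I would establish nonemptiness of $G_r$ for small $r$. Since $G$ is a nonempty open set it contains some ball $B_\rho(\mathbf z)$; but to get a bound depending only on a-priori data I would instead argue that the uniform cone condition guarantees $G$ contains, near any fixed reference point, a ball of radius $c\,\delta\sin(\theta/2)$ for an explicit constant $c$ — equivalently, the inradius of $G$ is bounded below by a constant $\rho_0$ depending only on the a-priori parameters (here one uses that $K$ and $K'$ are separated or, if they touch, that the cone condition still leaves room on the complement side). Then for every $r<\rho_0$ the center of such a ball lies in $G_r$, so $G_r\neq\emptyset$.

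For connectedness, the plan is: given two points $\mathbf p,\mathbf q\in G_r$, join them by a path $\gamma$ in $G$ (possible since $G$ is connected and open, hence path-connected), and then push $\gamma$ away from $\partial G$ to a new path $\tilde\gamma$ staying in $G_r$. The pushing is done using the interior cone condition: at each point of $\gamma$ that is within distance $r$ of $\partial G$, the cone of aperture $\theta$ and radius $\delta$ provides a canonical "inward" direction along which one can move a controlled distance $d(r)$ (with $d(r)\to 0$ as $r\to 0$, something like $d(r)\asymp r/\sin(\theta/2)$) to reach a point at distance $>r$ from $\partial G$, and the cones at nearby boundary points vary in a Lipschitz-controlled way so these local displacements can be patched into a continuous deformation. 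Choosing $r_m$ smaller than $\rho_0$ and smaller than the threshold below which the displacement $d(r)$ is less than, say, half the inradius, one checks $\tilde\gamma\subset G_r$, giving connectedness. The main obstacle I anticipate is making the "pushing the path inward" step genuinely uniform: one must ensure the inward-displacement construction is consistent on overlapping neighborhoods (so that the deformed path does not re-enter the boundary layer or jump between incompatible cone directions), and that the whole argument only ever invokes the finitely many a-priori constants. I would handle this by covering $\partial G$ with finitely many balls of radius $\delta/4$ (finiteness and a bound on the number coming from $|\partial G|<\infty$ and the perimeter bound), fixing one cone direction per ball, and using a subordinate partition of unity to blend the displacement field — essentially the standard construction of a global transversal vector field for a Lipschitz domain, with all constants tracked. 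Once that vector field exists with uniform lower bound on its inward component, flowing along it for time $\asymp r$ moves $G$ into $G_{r}$ and carries paths to paths, finishing the proof.
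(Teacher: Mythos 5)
The paper does not actually supply a proof of Proposition~\ref{prop Gr}: the single sentence preceding the statement reads ``Using a similar argument in [Alessandrini--Rondi--Rosset--Vessella, Proposition~5.5], we can obtain the following Proposition,'' and no further argument is given. So the only comparison I can make is against that implicit reference, which treats Lipschitz domains with quantitatively controlled constants and establishes connectedness of the inner-parallel set $\Omega_r$ by working in local graph coordinates. Your proposal is a direct geometric proof in the same spirit (uniform interior cone condition, inradius lower bound for nonemptiness, a patched transversal vector field to flow paths from $G$ into $G_r$). That is the standard mechanism and, as an outline, it is sound.

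Two points where your write-up is imprecise in a way worth fixing. First, Definition~\ref{def:Class B}(6) furnishes a uniform cone condition at points of $\partial K$ (resp.\ $\partial K'$) \emph{relative to} $G=\mathbb{R}^2\setminus K$ (resp.\ $\mathbb{R}^2\setminus K'$), whereas what the flow construction actually needs is a uniform cone/Lipschitz condition on the set $G$ appearing in the proposition, i.e.\ on $\partial(\Omega\setminus(K\cup K'))$. A cone contained in $\mathbb{R}^2\setminus K$ may perfectly well penetrate $K'$ when the two polygons are close, so one cannot carry Definition~\ref{def:Class B}(6) over directly; one must invoke the hypothesis that $G$ is Lipschitz with constants controlled by the a-priori data (which is what the proposition implicitly assumes and what the paper's stated constant $r_m$ secretly depends on). Second, the inradius lower bound $\rho_0$ you need for the nonemptiness step is not a consequence of the cone condition alone for an \emph{arbitrary} Lipschitz subset $G\subset\Omega\setminus(K\cup K')$; it holds for the specific $G$ used in the paper (the full exterior region inside $B_{R+1}$, which always contains the annulus $B_{R+1}\setminus\overline{B_R}$), and you should say that. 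Both issues are really ambiguities in the proposition's phrasing (``in the context of this paper'') rather than errors in your strategy, but a careful proof must pin them down, and the partition-of-unity/flow construction you sketch then closes the argument exactly as in the cited reference.
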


\begin{figure}[htbp]
	\centering
\includegraphics[height=8cm,width=8cm]{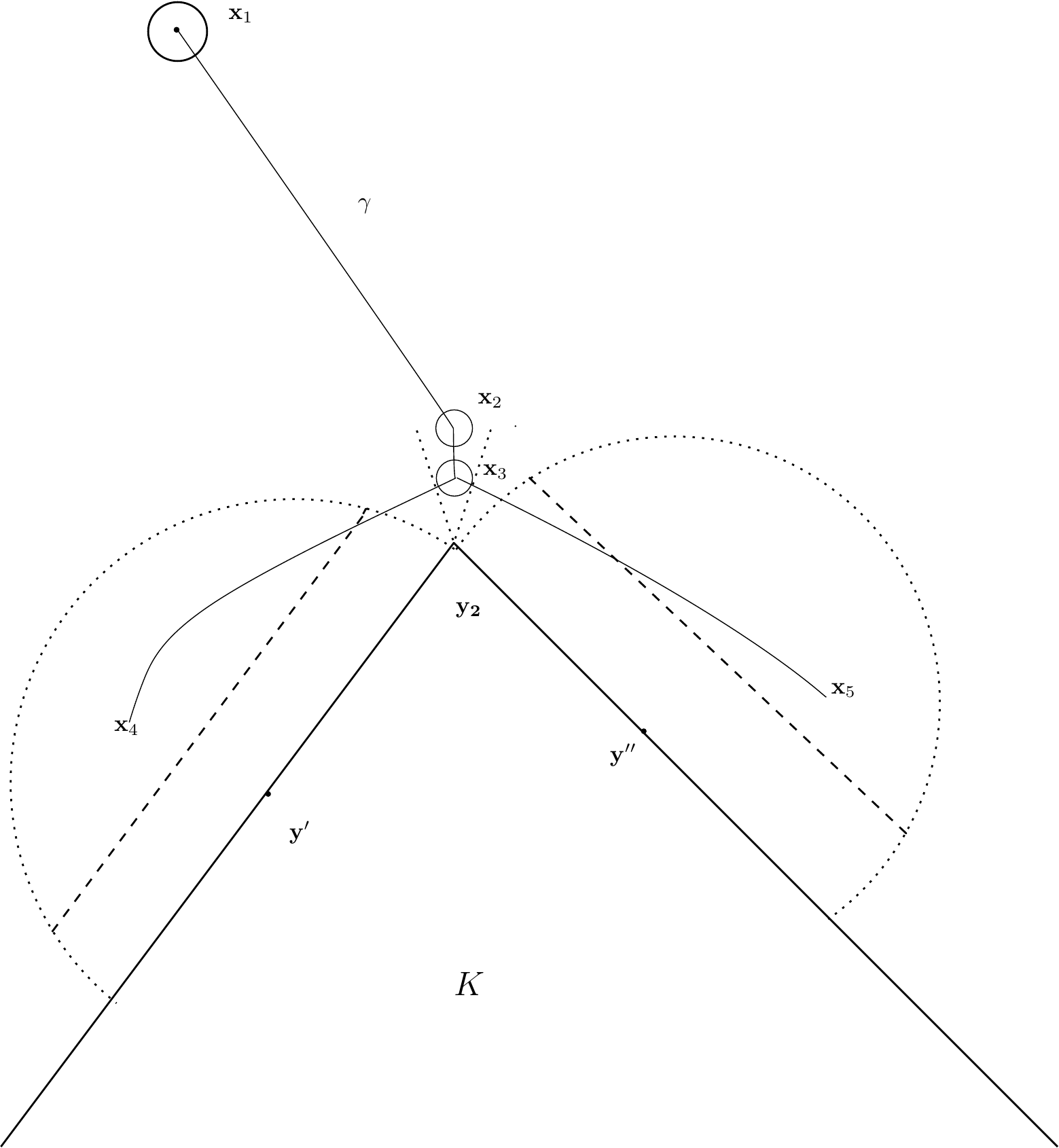}
	\caption{Schematic illustration of propagations from near-field to the boundary.}
	\label{figure1}
\end{figure}

\begin{defn}\label{Def:chain of ball}
Let $M\in \mathbb N$ and $r>0$.  
A sequence of disks $B_1,...,B_M$ is said to be a \textit{chain of disks with radius $r$}, if it satisfies the follow conditions:
\begin{itemize}
		\item [(1)] $r<r_m/5$;
		\item [(2)] the radius of each $B_i$ is $r$;
		\item [(3)] the center-to-center distance of $B_i$ to $B_{i+1}$ is at most $r$.
\end{itemize}
\end{defn}

\begin{lem}\label{Lem:local Holder continuous}
 Let $K$ and $K'$ be admissible obstacles described in Definition~\ref{def:Class B} and $K, K'\subset B_R=\Omega$.  
 Let $w(\mathbf x)\in H^{1}(\Omega\setminus K)$ be defined in \eqref{eq:w}. 
 If $0<h<1$ satisfies 
 \begin{align}\label{eq:cond h}
 		Q_h\cap K'=\emptyset,
 \end{align}
 then there exist some constants $0<h_2<h$, $0<\alpha<1$, and two point $\mathbf{y'}\in \Gamma^+_{h}$ and $\mathbf{y''} \in \Gamma^-_{h}$ such that $w(\mathbf x)\in C^{1,\alpha}(B^{+}_{h_2}(\mathbf{y'}))$ and $w(\mathbf x)\in C^{1,\alpha}(B^{+}_{h_2}(\mathbf{y''}))$, where $B^{+}_{h_2}(\mathbf{y'}) \Subset \Omega\backslash K' $ and $B^{+}_{h_2}(\mathbf{y''})\Subset \Omega\backslash K'$ are two small semi-circular domain of disks $B_{h_2}(\mathbf{y'})$ and $B_{h_2}(\mathbf{y''})$ respectively. 
 Furthermore, it yields that
 \begin{equation}\label{eq:H^3}
 		\mathcal T=\max\{{\left\| w \right\|_{{C^{1,\alpha }}({B^ +_{h_2 }}(\mathbf{y'}))}},{\left\| w \right\|_{{C^{1,\alpha }}({B^ +_{h_2 }}(\mathbf{y''}))}} \} \le C{\left\| w \right\|_{{H^1}(\Omega \backslash (K\cup K'))}},
 \end{equation}
 where the constant $C>0$ depends on the a-priori parameters only.
\end{lem}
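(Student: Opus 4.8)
The plan is to establish Lemma~\ref{Lem:local Holder continuous} in two stages: first produce the two good boundary points $\mathbf{y'}\in\Gamma^+_h$ and $\mathbf{y''}\in\Gamma^-_h$ together with semi-circular neighborhoods that stay away from $K'$, and then invoke interior/boundary elliptic regularity for the impedance (Robin) boundary value problem on those neighborhoods to get the $C^{1,\alpha}$ bound \eqref{eq:H^3}. For the first stage, recall that $\mathbf{y_2}$ is a vertex of $K$, so $\partial K$ near $\mathbf{y_2}$ consists of two line segments; the condition \eqref{eq:cond h}, namely $Q_h\cap K'=\emptyset$, ensures the truncated corner $Q_h=B_h(\mathbf{y_2})\cap K$ and hence the two edge pieces $\Gamma^\pm_h$ are disjoint from $K'$. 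Since $K'$ is closed and $\Gamma^\pm_h$ is compact, $\dist(\Gamma^\pm_h,K')>0$, so I can choose interior points $\mathbf{y'}\in\Gamma^+_h$, $\mathbf{y''}\in\Gamma^-_h$ (away from the vertex $\mathbf{y_2}$ itself, where the boundary is genuinely $C^\infty$, in fact flat) and a radius $h_2<h$ small enough that the half-disks $B^+_{h_2}(\mathbf{y'})$, $B^+_{h_2}(\mathbf{y''})$ lying on the exterior side of $\partial K$ are contained in $\Omega\setminus K'$. Here the admissible-class conditions (uniform edge length $\ell\ge\underline\ell$, uniform opening angle bounds) guarantee $h_2$ can be taken uniform, depending only on the a-priori parameters.

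For the second stage, the point is that on each half-disk $B^+_{h_2}(\mathbf{y'})$ the function $w=u-u'$ solves a Helmholtz equation and, crucially, a \emph{linear} boundary condition: on $\Gamma^+_h$ we have $\partial_\nu u+\eta u=0$ while $u'$ solves its own equation in a neighborhood that includes $\Gamma^+_h$ (since that piece of $\partial K$ lies in $\Omega\setminus K'$, so $u'$ is a smooth interior solution there). Thus $\partial_\nu w + \eta w = -(\partial_\nu u' + \eta u') =: g$ on $\Gamma^+_h$, where the right-hand side is controlled by the interior regularity of $u'$ away from $K'$ and the bounds on $\eta\in\Xi_{\mathcal B}$. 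Since the flat edge is a $C^{1,\alpha_0}$ (in fact $C^\infty$) piece of boundary and $\eta\in C^1\subset C^{0,\alpha_0}$, Schauder boundary estimates for the oblique/Robin problem (alternatively, $W^{2,p}$ estimates followed by Sobolev embedding, or the Agmon–Douglis–Nirenberg theory as in \cite[Theorem 3.2, Theorem 3.4]{alessandrini2013stable}) applied on $B^+_{h_2}(\mathbf{y'})$ yield
\begin{equation*}
  \|w\|_{C^{1,\alpha}(B^+_{h_2/2}(\mathbf{y'}))}\le C\Big(\|w\|_{L^2(B^+_{h_2}(\mathbf{y'}))}+\|g\|_{C^{0,\alpha}(\Gamma^+_h\cap B_{h_2}(\mathbf{y'}))}\Big),
\end{equation*}
and after relabeling $h_2/2$ as $h_2$ and absorbing the data term using the uniform bounds of Proposition~\ref{uniform L2 bound} and the trace theorem, this gives the bound by $\|w\|_{H^1(\Omega\setminus(K\cup K'))}$. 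Taking the maximum over the two points $\mathbf{y'},\mathbf{y''}$ produces \eqref{eq:H^3}, with $C$ depending only on the a-priori parameters because $h_2$, the edge geometry, and the bounds on $\eta$ are all uniform over $\mathcal B$ and $\Xi_{\mathcal B}$.

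The main obstacle I anticipate is a careful bookkeeping issue rather than a deep one: one must verify that the constants $h_2$ and $\alpha$ really can be chosen uniformly over all admissible $K,K'$ and all admissible $\eta$, which requires that the half-disks genuinely avoid $K'$ with a radius controlled only by a-priori data. This is where the structure of Definition~\ref{def:Class B} — the uniform cone condition (6), the uniform bounds (3)–(4) on edge lengths and angles, and condition (5) ensuring the complement stays connected — must be used to rule out degenerate configurations in which $K'$ crowds the chosen edge of $K$ at scales not controlled by \eqref{eq:cond h}. A secondary technical point is to make precise the claim that $u'$ is regular up to and across $\Gamma^\pm_h$: since those edge pieces lie in the \emph{open} set $\Omega\setminus\overline{K'}$, $u'$ satisfies the homogeneous Helmholtz equation in a full neighborhood of $\Gamma^\pm_h$, so interior Schauder estimates give $u'\in C^\infty$ there with bounds controlled, via Proposition~\ref{uniform L2 bound}, only by a-priori data; this is the ingredient that makes the boundary datum $g$ Hölder-bounded uniformly. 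Apart from these uniformity checks, the argument follows the template of \cite[Proposition 5.5]{alessandrini_stability_2009} closely, with the only genuinely new feature being the presence of the impedance term $\eta w$ in the boundary condition, which is harmless since it is a lower-order, Hölder-continuous coefficient.
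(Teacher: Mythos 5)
Your proposal is correct, and in fact it is considerably more careful than the paper's own one-sentence proof. The paper simply chooses $h_2 = h/2$, takes $\mathbf{y}', \mathbf{y}''$ to be the midpoints of $\Gamma^\pm_h$, and then asserts that ``by interior regularity estimation of an elliptic partial differential operator'' the function is analytic on the half-disks and \eqref{eq:H^3} holds. Taken literally, interior regularity only gives estimates on compact subsets of the open set $\Omega\setminus(K\cup K')$ and says nothing about the $C^{1,\alpha}$ norm up to the flat diameter of the half-disk, which sits on $\partial K$; yet in Proposition~\ref{Lem:Near field to boundary} the Lipschitz bound from $\mathcal T$ is precisely used to reach a point $\mathbf{x}\in\Gamma^+_h$ on the boundary. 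Your route supplies exactly the missing ingredient: you observe that $w=u-u'$ solves the Robin problem $\partial_\nu w + \eta w = -(\partial_\nu u' + \eta u')$ on $\Gamma^\pm_h$, that the inhomogeneity is controlled because $u'$ is analytic near $\Gamma^\pm_h$ (those arcs lying compactly in $\Omega\setminus\overline{K'}$ once $B_h\cap\overline{K'}=\emptyset$), that the edge is flat and $\eta\in C^1$, and then you invoke boundary Schauder/ADN estimates for the oblique/Robin problem to get $C^{1,\alpha}$ regularity up to $\Gamma^\pm_h$ with constants controlled by $\|w\|_{H^1}$. This is the correct mechanism; the paper's appeal to ``interior regularity'' should really be read as an appeal to this boundary regularity theory (the same machinery cited there as \cite[Theorems 3.2, 3.4]{alessandrini2013stable}). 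Your proposal also correctly flags the uniformity bookkeeping (that $h_2$ and $\alpha$ can be chosen independently of $K,K',\eta$ using the uniform edge-length, angle, and cone bounds from Definition~\ref{def:Class B} and the bound $\eta\in\Xi_{\mathcal B}$) which the paper leaves implicit. The one cosmetic mismatch is that the paper fixes $h_2=h/2$ and the midpoints explicitly rather than abstractly choosing points away from $K'$; this choice already guarantees the half-disks avoid $K'$ by the hypothesis on $h$, so your more general selection argument is harmless.
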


\begin{proof}
Given $h\in (0,1)$ satisfying $Q_h\cap K'=\emptyset$, we choose $h_2=h/2$. 
Let $\mathbf y'$ and $\mathbf y''$ be the middle points of $\Gamma_h^+$ and $\Gamma_h^-$, respectively. 
Hence $B^{+}_{h_2}(\mathbf{y'}) \Subset \Omega\backslash (K\cup K') $ and $B^{+}_{h_2}(\mathbf{y''})\Subset \Omega\backslash(K\cup K')$. 
Since $w(\mathbf x)\in H^{1}(\Omega\setminus K)$, by interior regularity estimation of an elliptic partial differential operator, we know that $u$ is analytic in $B^{+}_{h_2}(\mathbf{y'}) \cup B^{+}_{h_2}(\mathbf{y''}) $ and \eqref{eq:H^3} holds.
\end{proof}

By Lemma~\ref{three ball th} and using a similar argument in \cite[Lemma 3.5]{rondi2008stable}, we can obtain the following lemma.

\begin{lem}\label{Lem:Three ball theorem iteration}
Let $U\subset \Omega$ be a bounded connected domain, and $ \gamma \subset  U$ be a rectifiable curve which links two distinct points $\mathbf x,\mathbf y \in U$ such that $B_{4r}(\gamma) \subset U$. 
Let $w(\mathbf x) \in L^{\infty}(U)$ satisfy the Helmholtz equation, and $\Vert w \Vert_{L^{\infty}(U)}  \leq \mathcal{E}$.
If we can construct a chain of disks with radius $r$ connecting $\mathbf x$ and $\mathbf y$ along the curve $\gamma$, then we have
\begin{equation}
	{\left\| w \right\|_{{L^{\infty} }({B_r}(\mathbf y))}} \le {C_s}{{\mathcal E}_2}\left\| w \right\|_{{L^{\infty} }({B_r}(\mathbf x))}^{\beta^{d_\gamma /r + 1}} \le {C_s}{{\mathcal E}}\left\| w \right\|_{{L^{\infty} }({B_r}(\mathbf x))}^{\beta^{d_\gamma /r + 1}},
\end{equation}
where the constants $\mathcal{E}$ and $\beta$ are given in  Lemma~\ref{uniform infity bound} and Lemma~\ref{three ball th} respectively, the  constant $C_s>0$ depends on  a-priori parameters only. 
Here $d_\gamma$ is the length of $\gamma$.
\end{lem}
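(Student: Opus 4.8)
The plan is to combine the standard three-sphere inequality of Lemma~\ref{three ball th} with an iteration along the chain of disks from Definition~\ref{Def:chain of ball}, exactly in the spirit of \cite[Lemma 3.5]{rondi2008stable}. First I would fix the chain $B_1,\dots,B_M$ of disks of radius $r$ connecting $\mathbf x$ to $\mathbf y$ along the rectifiable curve $\gamma$, with $B_{4r}(\gamma)\subset U$; note that since consecutive centers are within distance $r$ and each disk has radius $r$, around each center the three-sphere estimate can be applied with the triple of radii $r<2r<3r$ (say), and the enlarged disk $B_{3r}$ of each step still lies inside $U$ because $B_{4r}(\gamma)\subset U$. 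Applying Lemma~\ref{three ball th} at the $i$-th center with $r_1=r$, $r_2=2r$, $r_3=3r$, $s$ chosen between $r_2$ and $r_3$, yields a bound of the form
\begin{equation*}
 \|w\|_{L^\infty(B_r(c_{i+1}))}\le \|w\|_{L^\infty(B_{2r}(c_i))}\le C'\,\|w\|_{L^\infty(B_{3r}(c_i))}^{1-\beta}\,\|w\|_{L^\infty(B_r(c_i))}^{\beta},
\end{equation*}
where $B_r(c_{i+1})\subset B_{2r}(c_i)$ because the center-to-center distance is at most $r$, and $C'=C(1-(r_2/s))^{-3/2}$ is a fixed constant depending only on $k$.

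Next I would iterate this estimate from $i=M$ down to $i=1$. Using the global a priori bound $\|w\|_{L^\infty(U)}\le\mathcal E$ (which controls every factor $\|w\|_{L^\infty(B_{3r}(c_i))}$ by $\mathcal E$, and also bounds the starting quantity), a straightforward induction shows that after $M$ steps
\begin{equation*}
 \|w\|_{L^\infty(B_r(\mathbf y))}\le (C')^{\,1+\beta+\cdots+\beta^{M-1}}\,\mathcal E^{(1-\beta)(1+\beta+\cdots+\beta^{M-1})}\,\|w\|_{L^\infty(B_r(\mathbf x))}^{\beta^{M}}.
\end{equation*}
Since $0<\beta<1$, the geometric series $\sum_{j\ge 0}\beta^j=(1-\beta)^{-1}$ converges, so the prefactor $(C')^{\sum\beta^j}$ is bounded by a constant $C_s>0$ depending only on the a priori parameters (through $k$ and the bound on $r$), and $\mathcal E^{(1-\beta)\sum\beta^j}$ is bounded by $\mathcal E^{(1-\beta)\cdot(1-\beta)^{-1}}=\mathcal E$ when $\mathcal E\ge 1$ (and otherwise absorbed into $C_s$). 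Finally, the number of disks $M$ needed to cover $\gamma$ can be taken to be at most $d_\gamma/r+1$, since each step advances the center by at least a controlled fraction of $r$ along $\gamma$; hence $\beta^M\ge\beta^{d_\gamma/r+1}$, and because $\|w\|_{L^\infty(B_r(\mathbf x))}\le\mathcal E$ one may freely replace the exponent $\beta^M$ by the (possibly smaller) exponent $\beta^{d_\gamma/r+1}$ without increasing the right-hand side. This yields the claimed inequality with $\mathcal E$ in front, and the intermediate $\mathcal E_2$ in the statement is just the cruder constant before this last absorption.

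The main obstacle, and the only point requiring genuine care, is the bookkeeping of the accumulated constants and exponents in the iteration: one must verify that the product of the step-wise multiplicative constants $C'$ stays bounded (which hinges on $\sum\beta^j<\infty$ and on a \emph{uniform} lower bound for $\beta$ along the chain — this is where conditions (1)--(3) of Definition~\ref{Def:chain of ball} and the a priori control $r<r_m/5$ enter, guaranteeing the radii triple used at every step is admissible and $\beta$ stays bounded away from $0$), and that the powers of $\mathcal E$ telescope correctly. A secondary technical point is to justify that $M\le d_\gamma/r+1$ suffices to chain along $\gamma$ within $U$; this follows from $B_{4r}(\gamma)\subset U$ together with a standard covering argument for rectifiable curves. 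Once these are in place, the estimate follows by the elementary manipulation of the geometric series above, and the remaining inequalities in the statement (the two successive $\le$ signs) are immediate from $\|w\|_{L^\infty(B_r(\mathbf x))}\le\mathcal E$ and $\mathcal E_2\le\mathcal E$ up to constants.
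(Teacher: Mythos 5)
Your argument is exactly the chain-of-disks propagation-of-smallness scheme that the paper itself invokes by citing \cite[Lemma 3.5]{rondi2008stable}; since the paper offers no independent proof beyond that citation, your reconstruction matches the intended route (apply the three-sphere inequality of Lemma~\ref{three ball th} with concentric radii $r<2r<3r$ at each center, use $|c_{i+1}-c_i|\le r$ to nest $B_r(c_{i+1})\subset B_{2r}(c_i)$, bound every $\|w\|_{L^\infty(B_{3r}(c_i))}$ by $\mathcal E$, iterate, and sum the resulting geometric series in $\beta$). One small bookkeeping point you should state explicitly: the replacement of the exponent $\beta^M$ by $\beta^{d_\gamma/r+1}$ in the final line is valid only when $\|w\|_{L^\infty(B_r(\mathbf x))}\le 1$ (which holds in the intended application, where this quantity is $\varepsilon_1$), and the constraint $3r\le\tilde R$ needed to invoke Lemma~\ref{three ball th} is implicitly guaranteed by the a priori bound $r<r_m/5$; with those caveats noted, the proof is sound and takes the same approach as the paper.
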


\begin{prop}\label{prop disks chain}
For a fixed point $\mathbf x_1$ given in \eqref{eq:x1}, there exits two finite chain disks $\cup_{j=1}^{M_1} B_r(\mathbf x_j)$ and $\cup_{j=1}^{M_2} B_r(\mathbf z_j)$  with radius $r$ with the starting disk centered at $\mathbf z_1=\mathbf x_1$ such that $\{\mathbf x_j\}_{j=1}^{M_1} \in \gamma_1$ and $\{\mathbf z_j\}_{j=1}^{M_2} \in \gamma_2$, where $\gamma_1$ and $\gamma_2$  are two rectifiable curves.
Furthermore it yields that $\left(\cup_{j=1}^{M_1} B_{4r}(\mathbf x_j)\right) \cup \left(\cup_{j=1}^{M_2} B_{4r}(\mathbf z_j)\right)\Subset \mathbb R^2 \backslash (\overline{ K\cup K'})$,  $B_{4r}(\mathbf x_{M_1}) \Subset B^{+}_{h_2}(\mathbf{y'})$ and $B_{4r}(\mathbf z_{M_2}) \Subset B^{+}_{h_2}(\mathbf{y''})$, where $B^{+}_{h_2}(\mathbf{y'})$ and $B^{+}_{h_2}(\mathbf{y''})$ are defined in Lemma~\ref{Lem:local Holder continuous}.
Here $r>0$ depends on a-prior information only.
\end{prop}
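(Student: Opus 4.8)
The plan is to construct the two chains of disks explicitly by connecting the starting point $\mathbf x_1\in B_{2R}\setminus B_R$ to the two semicircular domains $B^+_{h_2}(\mathbf y')$ and $B^+_{h_2}(\mathbf y'')$ sitting on the two edges $\Gamma^\pm_h$ adjacent to the vertex $\mathbf y_2$ of $K$ realizing the Hausdorff distance. The geometric obstruction to keep in mind throughout is that every disk in the chain, together with a $4r$-neighbourhood, must stay in $\mathbb R^2\setminus\overline{(K\cup K')}$ so that Lemma~\ref{Lem:Three ball theorem iteration} can be applied along the way; in particular the chain must thread between $K$ and $K'$ without touching either. First I would invoke Proposition~\ref{prop Gr} with $G$ a suitable bounded connected Lipschitz subset of $\Omega\setminus(K\cup K')$ (for instance the part of $B_{2R}\setminus(K\cup K')$ that contains $\mathbf x_1$ and reaches down to a neighbourhood of $\mathbf y_2$ along the exterior of both obstacles): the proposition furnishes an $r_m>0$, depending only on the a-priori parameters, such that the ``shrunk'' set $G_r=\{\mathbf x\in G:\dist(\mathbf x,\partial G)>r\}$ is nonempty and connected for all $r\in(0,r_m)$.

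Next I would fix $r$ once and for all, small enough that $r<r_m/5$ (so Definition~\ref{Def:chain of ball}(1) is met), that $B_{4r}(\mathbf x_1)\Subset\Omega\setminus\overline{(K\cup K')}$, and that $4r$ is smaller than $h_2$ and than a uniform lower bound on the distance from the chosen ``target'' points to $\partial(K\cup K')$; such an $r$ depends only on the a-priori parameters because the edge lengths, opening angles, inclusion radius $R$, and the separation quantities are all uniformly controlled in the class $\mathcal B$ (Definition~\ref{def:Class B}), and because by hypothesis $Q_h\cap K'=\emptyset$ one has a uniform corridor around $\mathbf y_2$. Since $G_r$ is open, connected and nonempty, any two of its points can be joined by a polygonal, hence rectifiable, path inside $G_r$; I would take $\mathbf y'$ and $\mathbf y''$ to be the midpoints of $\Gamma^+_h$ and $\Gamma^-_h$ as in Lemma~\ref{Lem:local Holder continuous}, pick points $\tilde{\mathbf x}\in B^+_{h_2}(\mathbf y')\cap G_r$ and $\tilde{\mathbf z}\in B^+_{h_2}(\mathbf y'')\cap G_r$ lying at distance $>4r$ from $\partial(K\cup K')$, and let $\gamma_1\subset G_r$ join $\mathbf x_1$ to $\tilde{\mathbf x}$ and $\gamma_2\subset G_r$ join $\mathbf x_1$ to $\tilde{\mathbf z}$. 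Because $\dist(\gamma_i,\partial G)>r\geq$ (a fixed multiple of) the chain radius after possibly shrinking $r$ by an absolute constant, the $4r$-tubular neighbourhoods $B_{4r}(\gamma_i)$ remain inside $\Omega\setminus\overline{(K\cup K')}$.

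Finally, along each rectifiable curve $\gamma_i$ I would lay down the finite chain $B_r(\mathbf x_1),B_r(\mathbf x_2),\dots$ in the standard way: parametrize $\gamma_i$ by arclength and place successive centres at arclength spacing $\le r$, which terminates after $M_i\le d_{\gamma_i}/r+1$ steps (the length $d_{\gamma_i}$ being bounded in terms of $R$ and $|\partial G|$, hence by the a-priori data), with the last disk centre chosen inside $B^+_{h_2}(\mathbf y')$, resp. $B^+_{h_2}(\mathbf y'')$, at distance $>4r$ from the boundary so that $B_{4r}(\mathbf x_{M_1})\Subset B^+_{h_2}(\mathbf y')$ and $B_{4r}(\mathbf z_{M_2})\Subset B^+_{h_2}(\mathbf y'')$. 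By construction every centre lies in $G_r$, the consecutive centre-to-centre distances are $\le r$, and all $4r$-neighbourhoods lie in $\mathbb R^2\setminus\overline{(K\cup K')}$, which is exactly the asserted conclusion. The main obstacle is the uniformity bookkeeping: showing that a single radius $r$, depending only on the a-priori parameters and not on the particular pair $(K,K')$ or on $h$, simultaneously satisfies $r<r_m/5$, keeps the starting and ending $4r$-disks inside the prescribed sets, and admits a connecting path in $G_r$ of uniformly bounded length that stays a uniform distance away from both obstacles; this rests on Proposition~\ref{prop Gr}, on the hypothesis $Q_h\cap K'=\emptyset$, and on the compactness/uniform-cone features of $\mathcal B$, and is where the argument of \cite[Lemma 3.5]{rondi2008stable} has to be adapted to the two-obstacle, near-corner situation.
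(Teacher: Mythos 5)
Your proposal is correct and follows essentially the same strategy as the paper's proof: invoke Proposition~\ref{prop Gr} to obtain a nonempty connected shrunk domain $G_{4r}$, pick terminal centers inside $G_{4r}\cap B^+_{h_2}(\mathbf y')$ and $G_{4r}\cap B^+_{h_2}(\mathbf y'')$ (requiring $4r<h_2/2$), join them to $\mathbf x_1$ by rectifiable curves staying a distance $>4r$ from $\partial(K\cup K')$, and lay down disks at arclength spacing $\le r$. The one place where the paper is more explicit than you are is the near-corner routing: rather than appealing abstractly to connectedness of $G_{4r}$ near the vertex $\mathbf y_2$, it uses item~(6) of Definition~\ref{def:Class B} to produce a concrete exterior cone $\mathcal C(\mathbf y_2,\omega_1,\delta_1,\theta_1)\subset\Omega\setminus(K\cup K')$, places intermediate waypoints $\mathbf x_2,\mathbf x_3$ on its bisecting line, and fixes $r$ by requiring that this cone be the tangent cone to $B_{4r}(\mathbf x_3)$ --- which is exactly what makes $r$ depend only on the a-priori parameters $(\delta,\theta,\ldots)$ rather than on the particular pair $(K,K')$. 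You correctly flag this uniformity as the delicate point, but the cone construction is precisely how the paper discharges it, so it would strengthen your argument to make that explicit. One small slip: you write $\dist(\gamma_i,\partial G)>r$ to justify $B_{4r}(\gamma_i)\subset\Omega\setminus\overline{K\cup K'}$; the curve needs to lie in $G_{4r}$, not $G_r$, as the paper does, or you must rescale $r$ by a factor of four from the start.
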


\begin{proof}
Let $\mathbf{y_2} \in \mathcal{V}(K)$, where $\mathfrak{h}=d_M(K, K')=\dist(\mathbf{y_2},K')$. Firstly, we construct open cones at the vertex $\mathbf{y_2}$, such that the disk for the propagation of smallness from the near-field to the boundary of the obstacle can be tangent to the open cones when passing near the vertex.
We use the properties of the open cone to guarantee that the conditions of the Lemma~\ref{Lem:Three ball theorem iteration} hold.
According to the uniform exterior cone condition of the admissible class $\mathcal{B}$, there exists a direction $\mathbf{\omega_1}\in \mathbb{S}^{1}$ such that the open cone $\mathcal{C}(\mathbf{y_2},\mathbf{\omega_1},\delta_1,\theta_1)$ is contained in $\Omega \backslash (K\cup K')$.
The chosen direction of the cone provides a guarantee that another obstacle $K'$ is avoided when the smallness is propagated by the three-sphere inequality in Lemma~\ref{Lem:Three ball theorem iteration}. 

We can find some point $\mathbf{x_3} \in\mathcal{C}(\mathbf{y_2},\mathbf{\omega_1},\delta_1,\theta_1) $ such that $\mathbf{x_2}=\mathbf{x_3}+t_2\mathbf{\omega_1}$ in the bisecting line of the cone $\mathcal{C}(\mathbf{y_2},\mathbf{\omega_1},\delta_1,\theta_1)$, where $t_2$ belongs to a finite interval in $\mathbb R_+$.
By adjusting the position of $\mathbf{x_3}$, we can choose $r\in \mathbb R_+$ such that the open cone $\mathcal{C}(\mathbf{y_2},\mathbf{\omega_1},\delta_1,\theta_1) $ is the tangential cone of disk $B_{4r}(\mathbf{x_3})$.

By Proposition~\ref{prop Gr}, the positive constant $r_m$ can be found only depending on a-prior information of $K, K'$ such that the set $G_{4r}$ defined as in \eqref{eq Gr} is connected for $0<4r<r_m$.
According to assumption (1) in Definition~\ref{Def:chain of ball}, we can find a chain of disks that satisfies the above conditions.
Therefore, it is not difficult to find two points $\mathbf{x_4}\in G_{4r}\cap B^+_{h_2}(\mathbf{y'}),\mathbf{x_5}\in G_{4r}\cap B^+_{h_2}(\mathbf{y''})$  satisfying  $B_{4r}(\mathbf x_4) \Subset B^{+}_{h_2}(\mathbf{y'})$ and $B_{4r}(\mathbf x_5) \Subset B^{+}_{h_2}(\mathbf{y''})$ respectively if $r<h_2/8$ is small enough.

Let a fixed point $\mathbf x_1$ fulfill \eqref{eq:x1} and a rectifiable curve $\gamma_0$ passes $\mathbf{x_1}, \mathbf{x_2}$ and $\mathbf{x_3}$.
Hence we can construct a chain of disks with radius $r$, joining the points
$\mathbf{x_1}, \mathbf{x_2},\mathbf{x_3}, \mathbf{x_4}$ made of disks centered on a rectifiable curve $\gamma_1$.
The first disk of this chain is $B_r(\mathbf{x_1})$, and the last disk is $B_r(\mathbf{x_4})$.
Similarly, we construct the chain of disks that connect $\mathbf{x_1}, \mathbf{x_2},\mathbf{x_3},\mathbf{x_5}$ in sequence with the center of the disk on $\gamma_2$.
The first disk of this chain is $B_r(\mathbf{x_1})$, and the last disk is $B_r(\mathbf{x_5})$.

By the construction of those chains of disks, we know that both propagation smallness from $\mathbf{x_1}$ to $\mathbf{x_4}$ and $\mathbf{x_1}$ to $\mathbf{x_5}$ satisfy the conditions of Lemma~\ref{Lem:Three ball theorem iteration}.
Without loss of generality, we rename all the center points of disks on the $\gamma_1$ and $\gamma_2$ as $\{\mathbf x_j\}_{j=1}^{M_1}$ and $\{\mathbf z_j\}_{j=1}^{M_2}$ respectively, such that $\gamma_1$ and $\gamma_2$ are two rectifiable curves.

The proof is complete.
\end{proof}

\begin{prop}\label{Lem:Near field to boundary}
Let $K$ and $K'$ be admissible obstacles described in Definition~\ref{def:Class B}. 
Recall that positive parameters $\beta\in (0,1)$, $\zeta$, $d_\gamma $, $0<h_2<h<1$, $r_m$, $\mathcal{T}$, and $\alpha \in (0,1)$ are defined in Definition~\ref{Def:chain of ball}, Lemmas~\ref{lem near error}, \ref{Lem:Three ball theorem iteration} and \ref{Lem:local Holder continuous} respectively. 
Suppose that $w(\mathbf x)\in L^{\infty}(B_{2R})$ be defined in \eqref{eq:w}. 
If \eqref{eq:x1} is fulfilled and
\begin{equation}\label{eq upper boudnd near error}
		{\varepsilon _1} \le {[\exp \exp ({{4|d_{\gamma} ||\ln {\beta}|} \over {(1 - \alpha )\min (r_m,h/4,\zeta )}})]^{ - 1}},
\end{equation}
where $\varepsilon _1$ is given in \eqref{eq:x1}, then there exist a constant $C_f>0$, such that
$$
	\sup_{\mathbf x\in\Gamma^{\pm}_h}|w(\mathbf x)| \leq  C_f(\ln|\ln \varepsilon_1|)^{-\alpha}, 
$$
	where $C_f>0$ depends on the a-priori parameters only.
\end{prop}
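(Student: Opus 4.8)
The plan is to propagate the near-field smallness $\varepsilon_1$ from the point $\mathbf{x}_1$ down to the two semicircular domains $B^+_{h_2}(\mathbf{y}')$ and $B^+_{h_2}(\mathbf{y}'')$ near the vertex $\mathbf{y}_2$, and then across the boundary pieces $\Gamma^\pm_h$. First I would invoke Proposition~\ref{prop disks chain} to obtain the two chains of disks $\cup_{j=1}^{M_1}B_r(\mathbf{x}_j)$ and $\cup_{j=1}^{M_2}B_r(\mathbf{z}_j)$ along rectifiable curves $\gamma_1,\gamma_2$ joining $\mathbf{x}_1$ to points inside $B^+_{h_2}(\mathbf{y}')$ and $B^+_{h_2}(\mathbf{y}'')$, with the enlarged disks $B_{4r}$ staying in $\mathbb{R}^2\setminus\overline{K\cup K'}$ so that the Helmholtz equation holds there for $w$. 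Applying Lemma~\ref{Lem:Three ball theorem iteration} along each chain, together with the uniform bound $\|w\|_{L^\infty}\le\mathcal{E}$ from Lemma~\ref{uniform infity bound} and estimate \eqref{eq:x1}, gives a bound of the form $\|w\|_{L^\infty(B_r(\mathbf{x}_{M_1}))}\le C_s\mathcal{E}\,\varepsilon_1^{\beta^{d_\gamma/r+1}}$, and similarly at $\mathbf{z}_{M_2}$; here I would absorb the exponent $\beta^{d_\gamma/r+1}$ into a single constant $\tau\in(0,1)$ depending only on a-priori parameters, so $\|w\|_{L^\infty}$ on those small disks is at most $C\varepsilon_1^{\tau}$.

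The second stage is the passage from these disks to the boundary arcs $\Gamma^\pm_h$. On $B^+_{h_2}(\mathbf{y}')$ (resp. $B^+_{h_2}(\mathbf{y}'')$) Lemma~\ref{Lem:local Holder continuous} gives $w\in C^{1,\alpha}$ with $\|w\|_{C^{1,\alpha}}\le\mathcal{T}\le C\|w\|_{H^1(\Omega\setminus(K\cup K'))}$, which is uniformly bounded by Proposition~\ref{uniform L2 bound}. I would then use a standard interpolation-type argument: a function that is $C^{1,\alpha}$-bounded on a domain and has $L^\infty$-norm at most $\delta$ on a ball of radius $r$ inside that domain must be small, with the smallness quantified by a power of $(\text{const}/|\ln\delta|)$ once one balances the Hölder modulus against the logarithm. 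Concretely, following the scheme in the references underpinning Lemma~\ref{Lem:local Holder continuous}, the bound on $\Gamma^\pm_h$ comes out as $C_f(\ln|\ln\varepsilon_1|)^{-\alpha}$ after taking the condition \eqref{eq upper boudnd near error} into account, which is exactly what guarantees $\varepsilon_1$ is small enough for the iterated double-logarithm to be well-defined and for the error-to-smallness conversion to lose only one further logarithm.

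The hard part will be bookkeeping the three nested scales — the chain length $d_\gamma/r$ producing the first logarithm loss (so $\varepsilon_1\mapsto\varepsilon_1^{\tau}$ with $|\ln\varepsilon_1^\tau|\sim|\ln\varepsilon_1|$), the Hölder-to-sup conversion producing the second logarithm loss (so $|\ln\varepsilon_1|\mapsto\ln|\ln\varepsilon_1|$), and making sure the quantitative constant $C_f$ and the smallness threshold \eqref{eq upper boudnd near error} are consistent with the constants emitted by Lemmas~\ref{Lem:Three ball theorem iteration} and \ref{Lem:local Holder continuous}. I would keep track of these by writing the propagated bound at $B_r(\mathbf{x}_{M_1})$ first, then at a point on $\Gamma^+_h$ via the $C^{1,\alpha}$ interpolation, and finally take the maximum over $\Gamma^+_h\cup\Gamma^-_h$; symmetry of the two chains means the estimate on $\Gamma^-_h$ is identical. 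Condition \eqref{eq upper boudnd near error} is precisely engineered so that the quantity $\ln|\ln\varepsilon_1|$ is positive and the interpolation inequality applies with the claimed exponent $\alpha$, so no additional smallness assumptions beyond those already in the hypotheses are needed; the final estimate $\sup_{\Gamma^\pm_h}|w|\le C_f(\ln|\ln\varepsilon_1|)^{-\alpha}$ then follows, completing the proof.
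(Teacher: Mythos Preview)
There is a genuine gap in your argument. You treat the disk radius $r$ in the chain as an a-priori fixed quantity and then ``absorb the exponent $\beta^{d_\gamma/r+1}$ into a single constant $\tau\in(0,1)$ depending only on a-priori parameters''. With $r$ fixed, the endpoint $\mathbf{x}_{M_1}$ of the chain sits at a \emph{fixed} positive distance from $\Gamma^+_h$, and the H\"older step then only yields
\[
|w(\mathbf{x})|\le |w(\mathbf{x}_{M_1})|+\mathcal{T}\,|\mathbf{x}-\mathbf{x}_{M_1}|^{\alpha}
\le C\varepsilon_1^{\tau}+\mathcal{T}\,(\text{const})^{\alpha},
\]
whose second term does not tend to $0$ as $\varepsilon_1\to 0$. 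Your appeal to ``a standard interpolation-type argument'' giving a $1/|\ln\delta|$ bound from a $C^{1,\alpha}$ norm and a small-ball $L^\infty$ bound is not correct without introducing a further small scale; $C^{1,\alpha}$ regularity alone cannot convert smallness on a fixed ball into smallness on a boundary arc at fixed positive distance.

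The key idea you are missing, and which the paper uses, is to let the radius $r$ depend on $\varepsilon_1$, namely
\[
r=r(\varepsilon_1)=\frac{d_\gamma\,|\ln\beta|}{(1-\alpha)\,\ln|\ln\varepsilon_1|}.
\]
With this choice the chain reaches within $O(r)$ of $\Gamma^\pm_h$, so the H\"older term is $\mathcal{T}(8r)^\alpha\sim(\ln|\ln\varepsilon_1|)^{-\alpha}$, while the three-sphere term becomes $\varepsilon_1^{\beta^{d_\gamma/r+1}}=\exp(-\beta|\ln\varepsilon_1|^{\alpha})$, which is dominated by the same quantity. Both contributions are therefore $\le C_f(\ln|\ln\varepsilon_1|)^{-\alpha}$. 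Condition~\eqref{eq upper boudnd near error} is not merely there to make $\ln|\ln\varepsilon_1|>0$; it is exactly what forces this $\varepsilon_1$-dependent $r$ to satisfy $4r<\min(r_m,h/4,\zeta)$, so that Proposition~\ref{prop disks chain} and Lemma~\ref{Lem:Three ball theorem iteration} apply with that radius. Without the $r=r(\varepsilon_1)$ balancing, the double-log rate cannot be obtained.
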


\begin{proof}
Let us fix a point $\mathbf{x_1}$ in the near-field satisfying \eqref{eq:x1}. 
In addition, we have constructed two chains of disks $\left(\cup_{j=1}^{M_1} B_r(\mathbf x_j)\right) \cup \left(\cup_{j=1}^{M_2} B_r(\mathbf z_j)\right)\Subset \mathbb R^2 \backslash (\overline{ K\cup K'})$ to propagate smallness from near-field to the neighborhood of the obstacle.
Next, we would like to introduce the argument about propagation smallness to the boundary of the obstacle.

For $B^{+}_{h_2}(\mathbf{y'})\cup B^{+}_{h_2}(\mathbf{y''})$, without loss of generality, we only give the analysis for $B^+_{h_2}(\mathbf{y'})$ in the following. 
It is convenient to divide the $B^+_{h_2}(\mathbf{y'})$ into the following two sets
$$
\Sigma_1 := \{\mathbf{x} \in B^{+}_{h_2}(\mathbf{y'})| \dist(\mathbf x, \partial \Gamma^+_{h}) \geq 4r\},\quad \Sigma_2 := \{\mathbf x \in B^{+}_{h_2}(\mathbf{y'})| \dist(\mathbf x, \partial \Gamma^+_{h}) \leq 4r\}.
$$
    
Using the geometric structure of the chain of disks constructed by Proposition~\ref{prop disks chain}, we know that $\mathbf{x}_{M_1}\in \Sigma_1$ and the chain $\gamma_1$ propagating smallness from near-field to $B_r(\mathbf{x}_{M_1})$. 
Similarly, one has $\mathbf{z}_{M_2}\in \Sigma_2 $ and curve $\gamma_2$ propagating smallness from near-field to $B_r(\mathbf{z}_{M_w})$.
Let $d_{\gamma_1}$ and $d_{\gamma_2}$ be lengths of curves $\gamma_1$ and $\gamma_2$ respectively.
Denote $d_\gamma=\max\{d_{\gamma_1},d_{\gamma_2}\}$.
Let us choose
\begin{equation}\label{eq r}
    	r=r(\varepsilon_1)=\frac{ d_\gamma|\ln \beta|}{(1-\alpha)\ln |\ln\varepsilon_1|},
\end{equation}
where $\beta$ is given by Lemma~\ref{three ball th} and $r>0$.
The upper bound on $\varepsilon_1$ implies that the condition of Proposition~\ref{prop disks chain} is satisfied. 
Using Lemma~\ref{Lem:Three ball theorem iteration}, we consider two semicircular domains respectively, where
$$
   \max\{\left|w(\mathbf x_{M_1})\right|,\left|w(\mathbf z_{M_2})\right|\} \le {C_s}{{\mathcal E}}{\varepsilon_1}^{\beta^{d\gamma /r + 1}}.
$$

For any $\mathbf x \in \Gamma^+_h$, we can find $\mathbf {x'}$ in a subset of $\Sigma_2$ such that $|\mathbf x-\mathbf {x'}| = 4r$.
Therefor we assume the $\mathbf{x'}\in B_{4r}(\mathbf{x}_{M_1})$ satisfying $B_{4r}(\mathbf x_{M_1}) \Subset B^{+}_{h_2}(\mathbf{y'})$.
The	upper bound on $\varepsilon_1$ implies  $4r < \min (r_m,h/4,\zeta)$. Thus $|\mathbf x-\mathbf{x}_{M_1}| \leq |\mathbf x-\mathbf{x'}|+|\mathbf{x'}-\mathbf{x}_{M_1}| \leq 8r$.
By the local H\"older continuity of $w(\mathbf x)$ in Lemma~\ref{Lem:local Holder continuous}, we have
\begin{equation}
\begin{aligned}
	|w(\mathbf{x})|&\leq \mathcal{T}|\mathbf{x}-\mathbf{x}_{M_1}|+|w(\mathbf{x}_{M_1})|\\
	&\leq \mathcal{T}8^{\alpha}r^{\alpha}+C_s\mathcal{E} {\varepsilon_1}^{\beta^{ d_\gamma/r+1}}\\
	&\leq (8^{\alpha}r^{\alpha}+{C\varepsilon_1}^{c_2^{ d_\gamma/r+1}})\mathcal{T}_1,
\end{aligned}
\end{equation}
where $\mathcal E$ comes from Lemma~\ref{Lem:Three ball theorem iteration} and $\mathcal T_1=\max\{\mathcal T,\mathcal E\}$.

According to \eqref{eq r}, it implies that
\begin{equation*}
	\frac{ d_\gamma}{r}=\frac{(1-\alpha)\ln |\ln\varepsilon_1|}{|\ln \beta|}=\log_{\beta}(|\ln\varepsilon_1| ^{\alpha-1}),
\end{equation*}
and hence
\begin{equation*}
\begin{aligned}
  {\varepsilon_1}^{\beta^{\mathrm d_\gamma/r+1}}
  &={e^{ - |\ln {\varepsilon _1}|\beta^{\mathrm {d_\gamma}/r + 1}}}\\
  &={e^{ - |\ln {\varepsilon _1}|\beta\log _{{\beta}}^{}(|\ln{\varepsilon _1}{|^{\alpha  - 1}}) + 1}}\leq {e^{ - {\beta}|\ln {\varepsilon _1}{|^\alpha }}}\leq \frac{1}{\beta|\ln {\varepsilon _1}{|^\alpha }}\leq \frac{1}{\beta}(\ln|\ln \varepsilon_1|)^{-\alpha}.
\end{aligned}
\end{equation*}
Therefore we have a constant $C_f>0$, such that
\begin{equation*}
\begin{aligned}
	|w(\mathbf x)|
	& \leq \frac{(16R|\ln \beta|/(1+\alpha))^{\alpha}+C/\beta}{(\ln|\ln \varepsilon_1|)^{\alpha}}\ \mathcal{T}_1\\
	&\leq C_f(\ln|\ln \varepsilon_1|)^{-\alpha},\\
\end{aligned}
\end{equation*}
where the positive number $C_f$ depends on the a-priori parameters only.

The proof is complete.
\end{proof}

\begin{prop}\label{Lem:Boundary estimation}
Let $K$, $K'\in \mathcal{B}$ be the admissible polygon obstacles. 
Let $u^i(\mathbf x)$ be the fixed incident plane wave, and $u(\mathbf x), u'(\mathbf x)$ be total waves satisfying the impedance system \eqref{impedance problem}, and their far-field pattern are $u^{\infty}_{(K,\eta)}(\hat{\mathbf{x}})$ and $u^{\infty}_{(K',\eta')}(\hat{\mathbf{x}})$ respectively.

If the condition \eqref{eq:far-field error} is fulfilled, then
\begin{equation}\label{eq sup b}
    	\sup_{\partial \Gamma^\pm_{h}}(|u-u'|+|\nabla(u-u')|)\leq C_b(\ln|\ln 1/\varepsilon|)^{-\frac{1}{2}}.
\end{equation}
where the positive constant $C_b$ depends on the a-priori parameters only.
\end{prop}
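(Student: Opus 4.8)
The plan is to feed the far-field smallness through the two error-propagation results already in hand (Lemma~\ref{lem near error} and Proposition~\ref{Lem:Near field to boundary}) and then to upgrade the resulting bound on $w=u-u'$ (cf. \eqref{eq:w}) to one that also controls $\nabla w$ up to the boundary. First, Lemma~\ref{lem near error} applies by virtue of \eqref{eq:far-field error}: it produces a near-field point $\mathbf x_1$ with $|\mathbf x_1|>R$ (so $B_\zeta(\mathbf x_1)$ is disjoint from $K\cup K'$) and $\|w\|_{L^\infty(B_\zeta(\mathbf x_1))}\le\varepsilon_1:=\exp(-C_a(-\ln\varepsilon)^{1/2})$, see \eqref{eq:x1}. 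Since $\varepsilon_1\to 0$ as $\varepsilon\to 0$, after possibly shrinking $\varepsilon_0$ we may assume $\varepsilon_1$ meets the smallness threshold \eqref{eq upper boudnd near error}, the standing hypothesis of Proposition~\ref{Lem:Near field to boundary}. Fixing $h\in(0,1)$ subject to \eqref{eq:cond h} --- so that the chain-of-disks construction of Proposition~\ref{prop disks chain} and the half-disks $B^+_{h_2}(\mathbf y'),B^+_{h_2}(\mathbf y'')$ of Lemma~\ref{Lem:local Holder continuous} are at our disposal --- Proposition~\ref{Lem:Near field to boundary} gives $\sup_{\Gamma^\pm_h}|w|\le C_f(\ln|\ln\varepsilon_1|)^{-\alpha}$, which already handles the $|u-u'|$ part of \eqref{eq sup b}.

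For the gradient part, note that since $w$ solves the Helmholtz equation in $\mathbb R^2\setminus\overline{K\cup K'}$, so does each component of $\nabla w$ (the coefficients being constant). Hence the same chain-of-disks/three-sphere propagation of Propositions~\ref{prop disks chain}--\ref{Lem:Near field to boundary} can be re-run with $\nabla w$ in place of $w$: the starting datum, obtained from the interior gradient estimate applied to \eqref{eq:x1}, is $\|\nabla w\|_{L^\infty(B_{\zeta/2}(\mathbf x_1))}\le C\zeta^{-1}\varepsilon_1$; the iterated three-sphere inequality (Lemmas~\ref{three ball th} and \ref{Lem:Three ball theorem iteration}) transports it along $\gamma_1,\gamma_2$ to the tips $\mathbf x_{M_1}\in B^+_{h_2}(\mathbf y')$ and $\mathbf z_{M_2}\in B^+_{h_2}(\mathbf y'')$; and the final H\"older step uses the $C^{1,\alpha}$ bound of Lemma~\ref{Lem:local Holder continuous} together with the a-priori estimates of Section~\ref{sect:2}, which control the H\"older seminorm of $\nabla w$ on the two half-disks. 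Two differences from the scalar case are harmless: the starting datum $C\zeta^{-1}\varepsilon_1$ differs from $\varepsilon_1$ only by a multiplicative constant, which shifts $|\ln\varepsilon_1|$ by a bounded additive amount and is invisible after a second logarithm; and the uniform $L^\infty$ bound for $\nabla w$ along the chain degrades, near the corner $\mathbf y_2$, to one of order $r^{-1}$ with $r=r(\varepsilon_1)$ as in \eqref{eq r}, since there the chain runs within distance $\sim r$ of $K\cup K'$ --- but $r^{-1}\sim\ln|\ln\varepsilon_1|$ is absorbed by the genuine super-polynomial smallness $e^{-c|\ln\varepsilon_1|^\alpha}$ actually reached at the chain's tip, exactly as in the proof of Proposition~\ref{Lem:Near field to boundary}. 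We conclude $\sup_{\Gamma^\pm_h}|\nabla w|\le C(\ln|\ln\varepsilon_1|)^{-\alpha}$, hence $\sup_{\Gamma^\pm_h}(|w|+|\nabla w|)\le C(\ln|\ln\varepsilon_1|)^{-\alpha}$, and a fortiori the same bound on $\partial\Gamma^\pm_h$.

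Finally, translate back to $\varepsilon$. From $|\ln\varepsilon_1|=C_a(-\ln\varepsilon)^{1/2}$ one gets $\ln|\ln\varepsilon_1|=\ln C_a+\frac{1}{2}\ln|\ln\varepsilon|\ge\frac{1}{4}\ln|\ln\varepsilon|$ for $\varepsilon$ sufficiently small, so $(\ln|\ln\varepsilon_1|)^{-\alpha}\le C(\ln|\ln 1/\varepsilon|)^{-\alpha}$. It then suffices to take $\alpha=1/2$ in Lemma~\ref{Lem:local Holder continuous}: this is admissible because $B^+_{h_2}(\mathbf y')$ and $B^+_{h_2}(\mathbf y'')$ are centered at edge midpoints --- away from the corner and from $K'$ --- where $w$ obeys interior and boundary Schauder estimates for the Helmholtz equation with a $C^1$ Robin coefficient on a straight segment, and is therefore uniformly bounded in $C^{1,\alpha}$ for every $\alpha\in(0,1)$, in particular $\alpha=1/2$. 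This yields \eqref{eq sup b} with $C_b$ depending only on the a-priori parameters.

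The main obstacle is the gradient step: differentiating costs a factor $r^{-1}\to\infty$, which at first sight destroys the estimate. The resolution is that the quantity propagated through the chains of disks is not merely the advertised $(\ln|\ln\varepsilon_1|)^{-\alpha}$ but the far smaller $e^{-c|\ln\varepsilon_1|^\alpha}$, so that after differentiation the dominant contribution near the boundary is still the H\"older correction $r^\alpha\sim(\ln|\ln\varepsilon_1|)^{-\alpha}$, just as for $w$ itself. A secondary point, already built into Proposition~\ref{prop disks chain} through the uniform cone condition of Definition~\ref{def:Class B}, is that the chains of disks must be kept away from \emph{both} obstacles simultaneously when they pass near the corner.
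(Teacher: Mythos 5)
Your proposal follows the paper's own proof essentially step for step: far-field to near-field via Lemma~\ref{lem near error}, near-field to boundary via Proposition~\ref{Lem:Near field to boundary} applied to both $w$ and $\nabla w$ (with the $C^{1,\alpha}$ bound of Lemma~\ref{Lem:local Holder continuous} supplying the H\"older modulus and $\alpha=1/2$), and then the change of variable $\varepsilon_1\mapsto\varepsilon$. The only difference is that you are more explicit than the paper about why the gradient version of the chain-of-disks argument is legitimate (the interior-gradient starting datum at $\mathbf x_1$ and the $r^{-1}$ degradation of $\|\nabla w\|_{L^\infty}$ near the corner, both absorbed by the exponential smallness actually produced by the three-sphere iteration); the paper simply invokes Proposition~\ref{Lem:Near field to boundary} for $\nabla w$ without spelling these points out.
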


\begin{proof}
By Lemma~\ref{uniform infity bound}, we establish a uniform bounded estimate and $\Vert w\Vert_{L^\infty(\Omega\setminus K)}\leq \mathcal E$.
Recall that $w(\mathbf x)=u(\mathbf x)-u'(\mathbf x) = u^s(\mathbf x)-u'^s(\mathbf x)$ and $\varepsilon$ denote the far-field.
Then we can get the near-field error $\varepsilon_1$ in a given circular domain by Lemma~\ref{lem near error} in $B_{\Vert \mathbf{x_0} \Vert + \zeta} \setminus{\overline{B _{\Vert \mathbf{x_0} \Vert - \zeta}}}$, and
\begin{equation}\label{eq f to n}
		\left\|u - u' \right\|_{L^{\infty} (B_{\Vert \mathbf{x_0} \Vert + \zeta} \setminus{\overline{B _{\Vert \mathbf{x_0} \Vert - \zeta}}})} = \varepsilon_1 \leq \exp(-C_a(-\ln \varepsilon)^{1/2}).
\end{equation}
If $\varepsilon$ is sufficiently small, there exists a constant $\varepsilon_m$ such that $\varepsilon_1$ satisfies its upper bound \eqref{eq upper boudnd near error}. 

The next step is to use the propagation of smallness by Proposition ~\ref{Lem:Near field to boundary} for $w(\mathbf x) = u(\mathbf x)-u'(\mathbf x)$ and  $ \nabla w(\mathbf x) = \nabla u(\mathbf x)-\nabla u'(\mathbf x)$.
By Lemma~\ref{Lem:local Holder continuous} we have $$\mathcal T \le C{\left\| u \right\|_{{H^1}(\Omega \backslash K)}}.$$
Therefore, both $w(\mathbf x)=u(\mathbf x)-u'(\mathbf x)$ and $\nabla w(\mathbf x)=\nabla u(\mathbf x)-\nabla u'(\mathbf x)$ have local H\"older continuity in ${B^{+}_{h_2 }}(\mathbf{y'})\cup{B^{+}_{h_2 }}(\mathbf{y''})$.
Thus the regularity conditions of Proposition~\ref{Lem:Near field to boundary} are satisfied for each choice of $w(\mathbf x)$ and we can assume $\alpha=1/2$.
Thus for $\forall \mathbf x\in \Gamma^{\pm}_h$ it implies
\begin{equation}\label{eq n to b}
		\sup_{\mathbf x\in\Gamma^{\pm}_h}|w(\mathbf x)|\leq C_f\left(\ln|\ln \varepsilon_1|\right)^{-1/2}.
\end{equation}
Combining \eqref{eq f to n} with \eqref{eq n to b}, it yields that 
\begin{equation*}
		\begin{aligned}
			\left| {\ln {\varepsilon _1}} \right|
			&\geq C\sqrt {\ln ({1 \over \varepsilon })} \geq C{\left(\ln \left({1 \over \varepsilon }\right)\right)^{{1 \over 4}}},
		\end{aligned}
	\end{equation*}
and
\begin{equation*}
	\begin{aligned}
		(\ln|\ln \varepsilon_1|)^{-1/2}
		&\leq {\left(\ln{\left(\ln \left({1 \over \varepsilon }\right)\right)^{{1 \over 4}}} \right)^{ - {1 \over 2}}} =2{\left(\ln \left(\ln \left({1 \over \varepsilon }\right)\right)\right)^{ - {1 \over 2}}},
	\end{aligned}
\end{equation*}
where $C>0$.
It should be remarked that when two different choices of $w(\mathbf x)$ are made, the corresponding constants $C_f,C$ are not the same, and here we take the largest of the two choices.
Therefore, when we fix $\mathbf x$ on $\Gamma^{\pm}_h$, there exists a positive constant $C_b$ such that \eqref{eq sup b} holds.

The proof is complete.
\end{proof}

\section{Micro-local analysis around a corner}\label{sec:micro}

This section is devoted to the quantitative behavior of the scattered field $u^s-u'^s$ around the corner $Q_h$ defined below using the introduced integral identity. 
Let us first specify our geometric notation. 
Under the assumption that $\mathbf y_2$ is a vertex of $K$ such that $\mathfrak{h}=d_M(K, K')=\dist(\mathbf{y_2}, K')$. 
Since $\Delta$ is invariant under rigid motion, without loss of generality, we assume that $\mathbf y_2=\mathbf 0$.
Assume that $h>0$ is sufficiently small such that
\begin{align}\label{eq:Qh}
	Q_h=B_h(\mathbf 0)\cap K,\ B_h\cap \overline {K'}=\emptyset.
\end{align}
Denote
\begin{align}\label{eq:gamma la}
	\partial Q_h\setminus \partial B_h=\Gamma_h^+\cup \Gamma_h^-,\ \Lambda_h=\partial Q_h \setminus (\Gamma_h^+\cup \Gamma_h^-).
\end{align}
Without loss of generality, by suitable rigid motion, we assume that $\Gamma_h^-$ lies on $x_1^+$-axis and $\Gamma_h^+$ posses an anti-clockwise direction to $\Gamma_h^-$ with $\angle(\Gamma_h^+,\Gamma_h^-)=\theta_0\in (0,\pi)$.
The exterior normal vectors to $\Gamma^+_{h}$ and $\Gamma^-_{h}$ are denote by  $\nu_1$ and $\nu_2$ respectively.
Therefore
\begin{align}
	\Gamma^+_{h}&=\{\mathbf x \in \mathbb R^2|\mathbf x=r(\cos {\theta _0},\sin {\theta _0}),\, r\in [0,h]\},\, \Gamma^-_{h}=\{\mathbf x \in \mathbb R^2~|~\mathbf x=r(1,0 ),\, r\in [0,h]\},\notag \\
	{\nu_1} &= ( - \sin {\theta _0},\cos {\theta _0}),\quad {\nu_2} = (0, - 1).\label{eq Gamma parameters }
\end{align}

\subsection{The complex geometric optical (CGO) solution}

We introduce the \textit{complex geometric optical solution (CGO)} for our subsequent analysis.
The detailed proof of the following Lemma is omitted.

\begin{lem}
Assuming $\tau,k\in \mathbb R_+$, we choose $\rho  =\rho(\tau,k)= \tau {\mathbf{d}} + {\rm{i}}\sqrt {{k^2} + {\tau ^2}} {{\mathbf{d}}^\perp }$, where  $\mathbf{d},\mathbf{d}^{\perp} \in \mathbb{S}^{1}$ satisfying $\mathbf{d}\perp \mathbf{d}^{\perp}$.
We define the CGO solution in the following form
\begin{equation}\label{CGO}
		u_{0}(\mathbf x)=e^{\rho\cdot \mathbf x},\quad \mathbf x\in \mathbb R^2.
\end{equation}
It yields that $\Delta u +k^2u=0$ in $\mathbb R^2.$ Let $Q_h$ be given by \eqref{eq:Qh}. 
There exist $\alpha'\in (0,1]$ and a open set $ \mathcal K_{\alpha'} \subset \mathbb S^1$ such that
\begin{align}\label{rem d}
		-\mathbf d \cdot {\bf \hat{x}}>\alpha'>0,\quad \forall \mathbf{d}\in \mathcal K_{\alpha'},\quad \forall {\mathbf x }  \in Q_h\backslash\{\mathbf 0\} ,\quad \bf \hat{x}=\mathbf x/|\mathbf x|.
\end{align}
\end{lem}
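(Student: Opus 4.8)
The plan is to verify the three claimed properties of the CGO solution $u_0(\mathbf x)=e^{\rho\cdot\mathbf x}$ separately, treating the Helmholtz equation as a direct computation and reserving the real effort for the geometric claim \eqref{rem d}. First I would check that $\Delta u_0+k^2u_0=0$: since $u_0$ is an exponential, $\Delta u_0=(\rho\cdot\rho)u_0$, so it suffices to observe that $\rho\cdot\rho=\tau^2(\mathbf d\cdot\mathbf d)-(k^2+\tau^2)(\mathbf d^\perp\cdot\mathbf d^\perp)+2\mathrm{i}\tau\sqrt{k^2+\tau^2}\,(\mathbf d\cdot\mathbf d^\perp)=\tau^2-(k^2+\tau^2)+0=-k^2$, using $|\mathbf d|=|\mathbf d^\perp|=1$ and $\mathbf d\perp\mathbf d^\perp$. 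Hence $\Delta u_0=-k^2u_0$ everywhere in $\mathbb R^2$, and the claim holds.

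Next I would set up the geometric picture behind \eqref{rem d}. By the normalization in Section~\ref{sec:micro}, $Q_h=B_h(\mathbf 0)\cap K$ is a corner with one edge $\Gamma_h^-$ along the positive $x_1$-axis and the other edge $\Gamma_h^+$ obtained by rotating through the opening angle $\theta_0\in(0,\pi)$; thus every point $\mathbf x\in Q_h\setminus\{\mathbf 0\}$ has $\hat{\mathbf x}=\mathbf x/|\mathbf x|$ lying in the closed circular arc $\mathcal A=\{(\cos\varphi,\sin\varphi):\varphi\in[0,\theta_0]\}$, an arc of length $\theta_0<\pi$, hence strictly contained in an open half-circle. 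The key elementary fact is that for any such arc $\mathcal A$ there is a unit vector $\mathbf e$ with $\mathbf e\cdot\hat{\mathbf x}>0$ for all $\hat{\mathbf x}\in\mathcal A$ — namely the bisecting direction $\mathbf e=(\cos(\theta_0/2),\sin(\theta_0/2))$, for which $\mathbf e\cdot(\cos\varphi,\sin\varphi)=\cos(\varphi-\theta_0/2)\ge\cos(\theta_0/2)>0$ since $|\varphi-\theta_0/2|\le\theta_0/2<\pi/2$. Taking $\mathbf d=-\mathbf e$ then gives $-\mathbf d\cdot\hat{\mathbf x}=\mathbf e\cdot\hat{\mathbf x}\ge\cos(\theta_0/2)>0$ uniformly over $Q_h\setminus\{\mathbf 0\}$.

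To get the open set $\mathcal K_{\alpha'}\subset\mathbb S^1$ rather than a single direction, I would argue by a continuity/perturbation argument: the function $(\mathbf d,\hat{\mathbf x})\mapsto -\mathbf d\cdot\hat{\mathbf x}$ is continuous on $\mathbb S^1\times\mathcal A$ and $\mathcal A$ is compact, so $\mathbf d\mapsto\min_{\hat{\mathbf x}\in\mathcal A}(-\mathbf d\cdot\hat{\mathbf x})$ is continuous; it is strictly positive at $\mathbf d=-\mathbf e$, so it remains bounded below by some $\alpha'>0$ on a whole open neighbourhood $\mathcal K_{\alpha'}$ of $-\mathbf e$ in $\mathbb S^1$. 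Concretely one can take $\mathcal K_{\alpha'}=\{\mathbf d\in\mathbb S^1:\angle(\mathbf d,-\mathbf e)<\epsilon\}$ for $\epsilon$ small and $\alpha'=\cos(\theta_0/2+\epsilon)/2$ or similar, choosing $\epsilon$ so that $\theta_0/2+\epsilon<\pi/2$. Since $\theta_0\in(0,\pi)$ is bounded away from $\pi$ via the a-priori parameters $(\underline\theta,\overline\theta)$ of the admissible class, $\alpha'$ can be taken to depend only on those parameters.

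I expect the only real subtlety to be bookkeeping: making sure the chosen $\mathbf d$ is genuinely independent of $h$ (which it is, since the angular sector of $Q_h$ does not depend on $h$), making sure $\mathcal K_{\alpha'}$ is nonempty and open (handled by the continuity argument), and — if one wants $\alpha'$ uniform over the admissible class — tracking that the opening angle is controlled by $\overline\theta<2\pi$ but, more importantly here, that at the relevant vertex the \emph{sector occupied by $Q_h$} has angle in $(0,\pi)$ as arranged in \eqref{eq Gamma parameters }. The Helmholtz identity and the existence of $\mathbf d$ are both immediate; the packaging into an open cone of admissible directions with a quantitative lower bound is the one place needing a short continuity argument, and that is the part I would write out carefully.
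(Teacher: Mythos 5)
The paper explicitly omits the proof of this lemma, so there is no argument to compare against; your proposal supplies the natural direct verification that the authors presumably had in mind. Your computation $\rho\cdot\rho = \tau^2 - (k^2+\tau^2) + 2\mathrm{i}\tau\sqrt{k^2+\tau^2}\,(\mathbf d\cdot\mathbf d^\perp) = -k^2$ correctly yields $\Delta u_0 + k^2 u_0 = 0$, and the geometric part --- observing that $\hat{\mathbf x}$ ranges over the arc of angular width $\theta_0 < \pi$ given by \eqref{eq Gamma parameters }, taking $\mathbf d$ near $-\mathbf e$ with $\mathbf e$ the bisecting direction so that $\mathbf e\cdot\hat{\mathbf x} = \cos(\varphi-\theta_0/2) \ge \cos(\theta_0/2) > 0$, and then using compactness of the arc plus continuity of $(\mathbf d,\hat{\mathbf x})\mapsto -\mathbf d\cdot\hat{\mathbf x}$ to pass from a single good direction $-\mathbf e$ to an open cone $\mathcal K_{\alpha'}$ of directions with a uniform lower bound $\alpha'$ --- is exactly right, including the bookkeeping that $\alpha'$ can be taken uniform since $\theta_0$ is controlled by the convexity assumption and the a-priori angle bounds of $\mathcal B$. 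No gap.
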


In Lemma~\ref{lem vo=N} we give local analytic behaviors of the total wave field $u'$ associated with $K'$ in $B_h$, where $B_h \cap K'=\emptyset$. Before that, we introduce the vanishing order of $u(\mathbf x)$ at $\mathbf x_0$, where $u(\mathbf x)$ is an analytic function in $B_h(\mathbf x_0)$ with $h\in \mathbb R_+$.  

\begin{defn}\cite[Definition 1.3]{cao2020nodal}\label{def:vanishing order}
	Let $u(\mathbf x)$ be analytic in $B_h(\mathbf x_0)$. 
	For a given point $\mathbf{x_0}\in B_h(\mathbf x_0)$, if there exists a number $N\in \mathbb{N}\cup \{0\}$ such that 
	\begin{equation}\label{Vanishing order}
		\mathop {\lim }\limits_{\rho \to  + 0} {1 \over {{\rho^m}}}\int\limits_{B_\rho(\mathbf{x_0})} {\left| {u(\mathbf x)} \right|\mathrm d\mathbf x}  = 0\ \mbox{for} \ m=0,1,\ldots ,N+1,
	\end{equation}
	we say that $u(\mathbf x)$ vanishes at $\mathbf{x_0}$ up to the order $N$. 
The largest possible number $N$ such that (\ref{Vanishing order}) is fulfilled is called the \textit{vanishing order} of $u(\mathbf x)$ at $\mathbf{x_0}$ and we write
$$
    {\rm Vani}(u; \mathbf x_0) = N.
$$
If (\ref{Vanishing order}) holds for any $N \in \mathbb{N}$, then we say that the vanishing order is infinity.
\end{defn}

\begin{lem}\label{lem vo=N}
Let $K$, $K'\in \mathcal{B}$ be admissible obstacles.
Suppose that $u(\mathbf x)$ and $u'(\mathbf x)$ are the total wave field to \eqref{impedance problem} associated with $K$ and $K'$ respectively.
The $u'(\mathbf x)$ satisfies the uniform bounded estimate $\left\Vert u'\right \Vert_{H^1(B_{R+1}\setminus K')}\leq \mathcal{E}$ defined by \eqref{eq:uniform bound}.
If \eqref{eq:Qh} is fulfilled, then $u'(\mathbf x)$ is analytic in $B_h$ and has a finite vanishing order $N\in \mathbb{N}\cup\{0\}$ at the origin.
Moreover, $u'(\mathbf x)$ has the following decomposition:
\begin{equation}\label{u' N}
	 u'(\mathbf x) = u'_{N}(\mathbf x)+\delta u'_{N+1}(\mathbf x),\quad \mathbf x=r(\cos \theta, \sin \theta)\in B_h, 
\end{equation}
where $u'_{N}(\mathbf x)=\left({a_N}{e^{\mathrm i N\theta }} + {b_N}{e^{ - \mathrm i N\theta }}\right){{{k^N}} \over {{2^N}N!}}{r^N}$ and
\begin{align*}
	\delta u'_{N+1}(\mathbf x)
	&= \left({a_N}{e^{\mathrm i N\theta }} + {b_N}{e^{ - \mathrm i N\theta }}\right){{{k^N}} \over {{2^N}N!}}\sum\limits_{p = 1}^\infty  {\left[ {{{{{( - 1)}^p}N!} \over {p!(N + p)!}}{{\left( {{k \over 2}} \right)}^{2p}}{r^{2p + N}}} \right]} \notag \\
	&\quad +\sum\limits_{n = N+1}^\infty  {\left({a_n}{e^{\mathrm i n\theta }} + {b_n}{e^{ - \mathrm i n\theta }}\right){J_n}(kr)}.\notag 
\end{align*}
Here $J_n(t)$ is the $n$-th Bessel function of the first kind given by
\begin{equation}\label{eq:Bessel }
		{J_n}(t) = \sum\limits_{p = 0}^\infty  {{{{{( - 1)}^p}} \over {p!(n + p)!}}{{\left( {{t \over 2}} \right)}^{n + 2p}}} 
\end{equation}
and $a_n,\ b_n \in \mathbb C$ with $n=N,N+1,\ldots$. 
Furthermore, if $h$ is sufficiently small such that
\begin{align}\label{eq:kh}
		kh<1,
\end{align}
it yields that
\begin{align}\label{eq:lem42}
		|u'_N(\mathbf x)|\leq C_{N}\ r^N,\quad| \delta u'_{N+1}(\mathbf x)|\leq \mathcal{R}\ r^{N+1}, \quad \forall \mathbf x=r(\cos\theta,\sin \theta)\in B_h,
\end{align}
	where the constants $C_{N}$ and $\mathcal{R}$ depend on a-priori parameters only.
\end{lem}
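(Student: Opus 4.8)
The plan is to establish the analyticity of $u'$ in $B_h$ first, then derive the Fourier–Bessel expansion, and finally extract the leading term to obtain the decomposition \eqref{u' N} together with the quantitative bounds \eqref{eq:lem42}. Since by \eqref{eq:Qh} we have $B_h\cap\overline{K'}=\emptyset$, the total wave field $u'$ satisfies the Helmholtz equation $\Delta u'+k^2u'=0$ in the full disk $B_h$ with no boundary condition interfering, so interior elliptic (in fact, analytic) regularity gives that $u'$ is real-analytic in $B_h$. Expanding $u'$ in polar coordinates $\mathbf x=r(\cos\theta,\sin\theta)$ and separating variables for the Helmholtz equation, one obtains the classical Fourier–Bessel series $u'(\mathbf x)=\sum_{n=-\infty}^{\infty} c_n J_{|n|}(kr)e^{\mathrm i n\theta}$, which I will rewrite as $\sum_{n=0}^{\infty}(a_n e^{\mathrm i n\theta}+b_n e^{-\mathrm i n\theta})J_n(kr)$ with $a_n,b_n\in\mathbb C$ (absorbing the $n=0$ term appropriately). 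The uniform bound $\|u'\|_{H^1(B_{R+1}\setminus K')}\le\mathcal E$ together with interior regularity yields a uniform $C^0$ (indeed $C^k$) bound on $u'$ on, say, $\overline{B_{h/2}}$, which by the standard Cauchy-type estimates for the Fourier–Bessel coefficients controls $|a_n|,|b_n|$ in terms of the a-priori parameters only.

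Next I would connect the expansion to the vanishing order. Using Definition~\ref{def:vanishing order}, the vanishing order $N$ of $u'$ at the origin is finite because $u'$ is analytic and not identically zero — the latter follows since $u'=u^i+u^{s\prime}$ and $u^i=e^{\mathrm i k\mathbf p\cdot\mathbf x}$ is nonvanishing near $\mathbf 0$ while $u^{s\prime}$ is bounded, or alternatively from the fact that an identically vanishing total field is incompatible with the radiation condition on $u^{s\prime}$. Because each $J_n(kr)=\tfrac{(k/2)^n}{n!}r^n(1+O(r^2))$, the lowest-degree homogeneous term in the Taylor expansion of $u'$ at $\mathbf 0$ comes precisely from the smallest $n$ with $(a_n,b_n)\neq(0,0)$; matching this with the definition of vanishing order identifies that index as $N$ and forces $a_n=b_n=0$ for $n<N$. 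This gives the splitting $u'=u'_N+\delta u'_{N+1}$ where $u'_N(\mathbf x)=(a_Ne^{\mathrm i N\theta}+b_Ne^{-\mathrm i N\theta})\tfrac{k^N}{2^N N!}r^N$ is the leading homogeneous part of the $n=N$ Bessel term, and $\delta u'_{N+1}$ collects both the higher-order tail of $J_N(kr)$ (the sum over $p\ge 1$) and all the terms with $n\ge N+1$.

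For the bounds \eqref{eq:lem42}, the estimate $|u'_N(\mathbf x)|\le C_N r^N$ is immediate from $|a_Ne^{\mathrm i N\theta}+b_Ne^{-\mathrm i N\theta}|\le|a_N|+|b_N|$ with $C_N$ depending on the already-controlled coefficients. For $|\delta u'_{N+1}(\mathbf x)|\le\mathcal R\,r^{N+1}$, using $kh<1$ from \eqref{eq:kh} one bounds the $p$-sum: $\sum_{p\ge1}\tfrac{N!}{p!(N+p)!}(k/2)^{2p}r^{2p+N}\le r^{N+1}\cdot r\sum_{p\ge1}(k/2)^{2p}r^{2p-2}$, which converges geometrically and is $O(r^{N+1})$; for the tail $\sum_{n\ge N+1}(a_ne^{\mathrm i n\theta}+b_ne^{-\mathrm i n\theta})J_n(kr)$, one uses $|J_n(kr)|\le\tfrac{(kr/2)^n}{n!}e^{(kr)^2/4}$ together with the uniform coefficient bounds to dominate the series by $C r^{N+1}$ when $kr<kh<1$, so that $\mathcal R$ depends only on the a-priori parameters. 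The main obstacle, and the point requiring the most care, is making the dependence of $C_N$ and $\mathcal R$ genuinely uniform over the admissible class — i.e.\ extracting from the single a-priori bound $\|u'\|_{H^1(B_{R+1}\setminus K')}\le\mathcal E$ a uniform estimate on all the Fourier–Bessel coefficients $\{a_n,b_n\}$ (not just finitely many), which is where the quantitative interior-analyticity estimate (Cauchy estimates on a fixed smaller disk, valid because $B_h$ is a fixed neighborhood disjoint from $K'$) does the essential work.
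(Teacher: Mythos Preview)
Your proposal is correct and follows essentially the same architecture as the paper's proof: interior analyticity of $u'$ in $B_h$, the Fourier--Bessel expansion, identification of the vanishing order $N$ with the least index having $(a_N,b_N)\neq(0,0)$, and then estimating the two pieces of $\delta u'_{N+1}$ separately using the power-series form of $J_n$.

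The one substantive difference is how the coefficients $a_n,b_n$ are controlled. The paper uses the $L^2(\partial B_h)$ orthogonality relation $\|u'\|_{L^2(\partial B_h)}^2=\sum_n(|a_n|^2+|b_n|^2)J_n(kh)^2$ together with the trace theorem and a lower bound on $|J_n(kh)|$. You instead invoke interior analytic regularity on a slightly smaller disk and Cauchy-type estimates for the Fourier modes. Both lead to the needed bound on $\sum_{n\ge N+1}(|a_n|+|b_n|)|J_n(kr)|$; your route is arguably more robust since it avoids delicate lower bounds on Bessel functions at a fixed argument, though you should make explicit that bounding $|a_n J_n(k\rho h)|$ uniformly (rather than $|a_n|$ itself) is what actually feeds into the geometric tail estimate for $r<\rho h$. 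Also, your first argument for $u'\not\equiv 0$ (``$u^i$ nonvanishing while $u^{s\prime}$ bounded'') is not by itself conclusive; the paper's Rellich-based argument, which you give as your alternative, is the one that works.
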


\begin{proof}
Since $u'(\mathbf x)\in H^1(\mathbb R^2 \backslash K' )$ is a solution to \eqref{impedance problem} associated with $K'$, by interior regularity of elliptic partial differential operator, one has $u'(\mathbf x)$ is analytic in $B_h$. 
By contradiction, assume that the vanishing order of $u'(\mathbf x)$ at $\mathbf 0$ is infinite, according to the Definition~\ref{def:vanishing order} of vanishing order, and Rellich's Lemma in \cite[Lemma 2.12]{colton2019inverse}, we can conclude that $u'(\mathbf x) \equiv 0$ in  $\mathbb R^2 \setminus  K'$. 
Thus the far-field measurement $u'^s(\mathbf x) \equiv 0$. 
   
In what follows we consider two separate cases for the vanishing order of $u'(\mathbf x)$ at $\mathbf 0$.
\medskip

 \noindent {\bf Case 1:}  ${\rm Vani}(u',\mathbf 0)=N\in \mathbb N$.
Using the spherical wave expansion of $u'$ (cf. \cite{colton2019inverse}), it arrives that
\begin{equation}\label{eq u' N}
		u'(\mathbf x) = ({a_N}{e^{\mathrm i N\theta }} + {b_N}{e^{ - \mathrm i N\theta }})J_N(kr)+\sum\limits_{n = N+1}^\infty  {({a_n}{e^{\mathrm i n\theta }} + {b_n}{e^{ - \mathrm i n\theta }}){J_n}(kr)},
\end{equation}
which is an absolutely uniform convergent series in $B_h$. 
]Here $a_i$ and $b_i$ are constants, $(a_N,b_N)\neq \mathbf 0$.    In view of \eqref{eq:Bessel }, we can rewrite \eqref{eq u' N} as
	\begin{align}\label{eq new un}
			u'(\mathbf{x})
			&= \left({a_N}{e^{\mathrm i N\theta }} + {b_N}{e^{ - \mathrm i N\theta }}\right){{{k^N}} \over {{2^N}N!}}{r^N} \\
			&+ \left({a_N}{e^{\mathrm i N\theta }} + {b_N}{e^{ - \mathrm i N\theta }}\right){{{k^N}} \over {{2^N}N!}}\sum\limits_{p = 1}^\infty  {\left[ {{{{{( - 1)}^p}N!} \over {p!(N + p)!}}{{\left( {{k \over 2}} \right)}^{2p}}{r^{2p + N}}} \right]} \notag \\
			&+\sum\limits_{n = N+1}^\infty  {\left({a_n}{e^{\mathrm i n\theta }} + {b_n}{e^{ - \mathrm i n\theta }}\right){J_n}(kr)}.\notag 
	\end{align}

In the following, we estimate the upper bound of coefficients $|a_n|$ and $|b_n|,\ n\geq  N$.
It is clear that
   \begin{equation*}
   	\left\Vert u^{\prime }\left( \mathbf{x} \right)  \right\Vert^{2}_{L^{2}\left( \partial B_{h}\right)  }  =\int_{\partial B_{h}} u^{\prime }\left( \mathbf{x} \right)  \overline{u^{\prime }\left( \mathbf{x} \right)  } \mathrm{d} \sigma =\sum^{+\infty }_{n=N} \left( a^{2}_{n}+b^{2}_{n}\right)  J^{2}_{n}\left(kh\right).
   \end{equation*}
Notice that the first zero $\{j_{n,1}\}_{n\in\mathbb N}$ and the first zero $\{j'_{n,1}\}_{n\in\mathbb N}$ of the Bessel function $\{J_n(t)\}_{n\in\mathbb N}$ and $\{J_n'(t)\}_{n\in\mathbb N}$ satisfy $n\leq j'_{n,1}\leq j_{n,1}$ for $\forall {n\in\mathbb N}$ and $J_0(0)=1,J_n(0)=0,n\in \mathbb N \setminus\{0\}$.
Then the Bessel function $\{J_n(t)\}_{n\in \mathbb N \setminus\{0\}}$ is monotonically increasing on the interval $(0,kh]$, if $h$ is sufficiently small.
Therefore, due to \eqref{eq:kh}, we can find a $t^*\in (0,kh]\subset (0,j_{1,1}]$ such that $|J_n(kh)|\geq |J_n(t^*)|=C_1$ for $\forall n\in\mathbb N$.   
Let $\sigma: H^1(B_h)\rightarrow L^2(\partial B_h)$ be a bounded linear trace operator, then the continuity of $\sigma$ and elliptic equation interior estimation implies that 
   $$
   { \|\sigma u'\|_{L^{2}(\partial B_h)}\leq C_2\|u'\|_{H^{1}(B_h )}} \leq C_2C_3\mathcal E,
   $$
   where $\mathcal E$ is the uniform bound of $u'$ defined by \eqref{eq:uniform bound} in Proposition~\ref{uniform L2 bound} and $C_3$ is a constant depending a-priori parameters only.
Therefore, there exists a constant $C_4$ such that
   \begin{equation*}
   	 \max\{|a_{n}|,|b_n|\}
   	 \leq \frac{\left\Vert u^{\prime }\left( \mathbf{x} \right)  \right\Vert_{L^{2}\left( \partial B_{h}\right)  }  }{|J_{n}\left( kh\right)  |} 
   	 \leq \frac{C_{2}C_3\mathcal{E} }{C_{1}} 
   	 \leq C_4,\quad
   	 \forall n\in\mathbb N.
   \end{equation*}
	
With the upper bound of $|a_n|,|b_n|$, we focus on the first term coefficient of \eqref{u' N} and it is not difficult to find that there exists a constant $C_N:=\left(|{a_N}|+ |{b_N}|\right){{{k^N}} \over {{2^N}N!}}$ such that
	\begin{equation*}
			|u'_N(\mathbf x)|\leq C_N\ r^N,
	\end{equation*}
where $u'_N(\mathbf x)=\left({a_N}{e^{\mathrm i N\theta }} + {b_N}{e^{ - \mathrm i N\theta }}\right){{{k^N}} \over {{2^N}N!}} $.
    
Furthermore, we consider the latter two terms of \eqref{u' N}.
Since the series of $J_n(t), $ converges absolutely and uniformly in any bounded domain (cf.\cite{korenev2002bessel}).
Thus we can find two constants $C_5,C_6,C_7$ that satisfy
	\begin{align*}
    &\nonumber \Big|\left({a_N}{e^{\mathrm i N\theta }} + {b_N}{e^{ - \mathrm i N\theta }}\right){{{k^N}} \over {{2^N}N!}}\sum\limits_{p = 1}^\infty  {\left[ {{{{{( - 1)}^p}N!} \over {p!(N + p)!}}{{\left( {{k \over 2}} \right)}^{2p}}{r^{2p + N}}} \right]}\\ \nonumber
    &\qquad +\sum\limits_{n = N+1}^\infty  {\left({a_n}{e^{\mathrm i n\theta }} + {b_n}{e^{ - \mathrm i n\theta }}\right){J_n}(kr)}\Big| \\ \nonumber
    &\leq r^{N+1}\ \left(|{a_N}| + |{b_N}|\right){{{k^N}} \over {{2^N}N!}}\left|\sum\limits_{p = 1}^\infty  {\left[ {{{N!} \over {p!(N + p)!}}{{\left( {{k \over 2}} \right)}^{2p}}{r^{2p - 1}}} \right]}\right|\\
    &\qquad+r^{N+1}\ \sum\limits_{n = N + 1}^\infty  {\left(|{a_n}| + |{b_n}|\right)} \left|\sum\limits_{p = 0}^\infty  {{{{{( - 1)}^p}} \over {p!(n + p)!}}{{\left( {{kr \over 2}} \right)}^{n + 2p-N-1}}}\right|\\ \nonumber
    &\leq C_5(|a_N|+|b_N|)\ r^{N+1}+C_6\sum_{n=N+1}^{\infty}(|a_n|+|b_n|)\ r^{N+1}\nonumber\\
    &\leq (2C_4C_5+C_6C_7)\ r^{N+1}      \nonumber\\
    &\leq \mathcal R\ r^{N+1},\nonumber
    \end{align*}	
where $\mathcal R:=2C_4C_5+C_6C_7$ is a positive constant depending on a-priori parameters.
For simplicity, one has 
   \begin{align*}
     |\delta u'_{N+1}(\mathbf x)|\leq \mathcal R\ r^{N+1},
    \end{align*}	
where 
   \begin{align*}
    \delta u'_{N+1}(\mathbf x)= &\left({a_N}{e^{\mathrm i N\theta }} + {b_N}{e^{ - \mathrm i N\theta }}\right){{{k^N}} \over {{2^N}N!}}\sum\limits_{p = 1}^\infty  {\left[ {{{{{( - 1)}^p}N!} \over {p!(N + p)!}}{{\left( {{k \over 2}} \right)}^{2p}}{r^{2p + N}}} \right]}\\ \nonumber
    &\qquad +\sum\limits_{n = N+1}^\infty  {\left({a_n}{e^{\mathrm i n\theta }} + {b_n}{e^{ - \mathrm i n\theta }}\right){J_n}(kr)}. 
   \end{align*}
   
   \medskip
 \noindent {\bf Case 2:}  ${\rm Vani}(u',\mathbf 0)=0$. Since $u'$ is analytic in $B_h$, we have $u'(\mathbf x)=u'(\mathbf 0)+\frac{\partial u'(\zeta \mathbf x)}{\partial x_1} x_1+ \frac{\partial u'(\zeta \mathbf x)}{\partial x_2} x_2$, where $u'(\mathbf 0)\neq 0$ and $\zeta \in (0,1)$. 
Following the similar argument for Case 1, we can obtain \eqref{eq:lem42}. 
   	
   	\medskip
The proof is complete.
\end{proof}

\subsection{Integral identity}

The main purpose of this subsection is to perform a micro-local analysis near a vertex $\mathbf y_2=\mathbf 0$ of the obstacle $K$ by establishing an integral identity in Proposition~\ref{pro:51}. 
Since the impedance parameter $\eta(\mathbf x) $ associated with the admissible obstacle $K\in \mathcal  B$ fulfills $\eta(\mathbf x)\in \Xi_{\mathcal B}$, one has
\begin{align}\label{eq: estimation of eta }
	\eta(\mathbf x)=\eta(\mathbf 0) +\delta \eta(\mathbf x),\quad |\delta \eta| \leq \| \eta\|_{C^1(\Gamma_h^\pm  )} |\mathbf x|\leq M_2 |\mathbf x|.	
\end{align}

\begin{prop}\label{pro:51}
Let $K,K'\in\mathcal{B}$ and $u(\mathbf x),u'(\mathbf x)$ be the solution to \eqref{impedance problem} associated with obstacles $K$ and $K'$, respectively.
Let $\eta(\mathbf x),\eta'(\mathbf x)\in\Xi_{\mathcal B}\subset C^1(B_R)$ be the impedance parameters of $K,K'$ respectively.
Let $u_0(\mathbf x)$ be the CGO solution defined by \eqref{CGO}. 
Assume that $Q_h$ is defined by \eqref{eq:Qh}.
Recall that $u'(\mathbf x)$ is analytic in $Q_h$ and has the expansion \eqref{eq u' N} in $Q_h$. 
Let 
\begin{equation}\label{eq Gamma+_}
\begin{aligned}
		&\Gamma^+=\{\mathbf x \in \mathbb R^2|\mathbf x=r(\cos {\theta _0},\sin {\theta _0}),\, r\in [0,\infty) \},\\
		&\Gamma^-=\{\mathbf x \in \mathbb R^2~|~\mathbf x=r(1,0 ),\, r\in [0,\infty) \}.
\end{aligned}
\end{equation}
Then the following integral identity holds that
	\begin{align}\label{integral equation}
		&\int_{{\Gamma ^ \pm }} {{\partial {u_0}(\mathbf x)} \over {\partial \nu }} u'_{N}(\mathbf x)\mathrm d\sigma
		=\int_{{\Gamma ^ \pm_h }} {{u_0}(\mathbf x){{\partial [u'(\mathbf x) - u(\mathbf x)]} \over {\partial \nu }}} \mathrm d\sigma
		-\eta(\mathbf 0) \int_{{\Gamma ^ \pm_h }} {{u_0}(\mathbf x)[u (\mathbf x)- u'(\mathbf x)]} \mathrm d\sigma\nonumber\\
		&- \int_{{\Gamma ^ \pm_h }} \delta\eta(\mathbf x){{u_0}(\mathbf x)[u (\mathbf x)- u'(\mathbf x)]} \mathrm d\sigma
		-\eta(\mathbf 0) \int_{{\Gamma ^ \pm_h }} {{u_0}(\mathbf x)} u'_{N}(\mathbf x)\mathrm d\sigma
		- \int_{{\Gamma ^ \pm_h }} \delta\eta(\mathbf x){{u_0}(\mathbf x)} u'_{N}(\mathbf x)\mathrm d\sigma\nonumber  \\ 
		&-\eta(\mathbf 0)\int_{{\Gamma ^ \pm_h }} \delta{u'_{N+1}} (\mathbf x){u_0}(\mathbf x)\mathrm d\sigma
		-\int_{{\Gamma ^ \pm_h }} \delta\eta(\mathbf x)\delta{u'_{N+1}} (\mathbf x){u_0}(\mathbf x)\mathrm d\sigma\nonumber
		 -\int_{{\Gamma ^ \pm_h }} \delta{u'_{N+1}(\mathbf x){{\partial {u_0}(\mathbf x)} \over {\partial \nu }}} \mathrm d\sigma\nonumber \\
		&+\int_{{\Gamma^\pm\setminus\Gamma^\pm_h }} {{\partial {u_0}(\mathbf x)} \over {\partial \nu }} u'_{N}(\mathbf x)\mathrm d\sigma+\int_{\Lambda_h}  \left[{{u_0(\mathbf x)}{{\partial u'(\mathbf x)} \over {\partial \nu }} - u'(\mathbf x)} {{\partial {u_0}(\mathbf x)} \over {\partial \nu }}\right]\mathrm d\sigma.
	\end{align}
where $\Gamma_h^\pm$ and $\Lambda_h$ are defined in \eqref{eq:gamma la}.
\end{prop}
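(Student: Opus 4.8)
The plan is to derive \eqref{integral equation} from Green's second identity applied on the corner domain $Q_h$, carefully tracking how the impedance boundary condition on $\Gamma_h^\pm$ enters and how the various ``remainder'' contributions are packaged. First I would recall that $u_0$ is an entire solution of the Helmholtz equation by the CGO Lemma, while $u$ satisfies \eqref{impedance problem} in $\mathbb{R}^2\setminus K$ and hence (via the density of a Lipschitz corner domain) admits well-defined traces of $u$ and $\partial_\nu u$ on $\partial Q_h$; similarly $u'$ is analytic in $B_h\supset Q_h$ by Lemma~\ref{lem vo=N}. Since both $u_0$ and $u'$ solve $\Delta v+k^2v=0$ in $Q_h$, Green's second identity on $Q_h$ gives
\[
0=\int_{\partial Q_h}\Big(u_0\,\frac{\partial u'}{\partial\nu}-u'\,\frac{\partial u_0}{\partial\nu}\Big)\,\mathrm d\sigma
=\int_{\Gamma_h^\pm}\Big(u_0\,\frac{\partial u'}{\partial\nu}-u'\,\frac{\partial u_0}{\partial\nu}\Big)\,\mathrm d\sigma
+\int_{\Lambda_h}\Big(u_0\,\frac{\partial u'}{\partial\nu}-u'\,\frac{\partial u_0}{\partial\nu}\Big)\,\mathrm d\sigma,
\]
which already isolates the $\Lambda_h$-term appearing as the last term of \eqref{integral equation}. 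It remains to rewrite the $\Gamma_h^\pm$-integral.

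Next I would replace $\partial_\nu u'$ on $\Gamma_h^\pm$ using the impedance condition for $u$. On $\Gamma_h^\pm\subset\partial K$ we have $\partial_\nu u=-\eta u$, so
\[
\frac{\partial u'}{\partial\nu}=\frac{\partial u}{\partial\nu}+\frac{\partial(u'-u)}{\partial\nu}
=-\eta u+\frac{\partial(u'-u)}{\partial\nu}
=-\eta(\mathbf 0)\,u-\delta\eta(\mathbf x)\,u+\frac{\partial(u'-u)}{\partial\nu},
\]
using the splitting \eqref{eq: estimation of eta }. Substituting and writing $\eta(\mathbf 0)u=\eta(\mathbf 0)(u-u')+\eta(\mathbf 0)u'$ and likewise for $\delta\eta$, the $\int_{\Gamma_h^\pm}u_0\,\partial_\nu u'$ portion becomes the first three terms on the right of \eqref{integral equation} (the $\partial_\nu[u'-u]$ term, the $\eta(\mathbf 0)[u-u']$ term, the $\delta\eta[u-u']$ term) plus $-\eta(\mathbf 0)\int_{\Gamma_h^\pm}u_0 u'-\int_{\Gamma_h^\pm}\delta\eta\,u_0 u'$. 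Then I would insert the decomposition $u'=u'_N+\delta u'_{N+1}$ from \eqref{u' N} into every remaining occurrence of $u'$ (both in these two new impedance terms and in the $\int_{\Gamma_h^\pm}u'\,\partial_\nu u_0$ term inherited from Green's identity), which generates exactly the terms carrying $u'_N$ and $\delta u'_{N+1}$ in \eqref{integral equation}: the $\eta(\mathbf 0)\int u_0 u'_N$, $\int\delta\eta\,u_0 u'_N$, $\eta(\mathbf 0)\int u_0\,\delta u'_{N+1}$, $\int\delta\eta\,u_0\,\delta u'_{N+1}$, $\int \delta u'_{N+1}\,\partial_\nu u_0$ terms, and the $-\int_{\Gamma_h^\pm}u'_N\,\partial_\nu u_0$ term.

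Finally I would extend the $\int_{\Gamma_h^\pm}u'_N\,\partial_\nu u_0$ term to the full infinite rays $\Gamma^\pm$ by adding and subtracting the tail $\int_{\Gamma^\pm\setminus\Gamma_h^\pm}u'_N\,\partial_\nu u_0$; this yields the $\int_{\Gamma^\pm}\partial_\nu u_0\,u'_N$ term on the left and the $\int_{\Gamma^\pm\setminus\Gamma_h^\pm}\partial_\nu u_0\,u'_N$ correction on the right, completing the claimed identity after moving the full-ray term to the left-hand side. The only genuine subtlety is justifying the boundary integrations by parts at the required level of regularity: $u$ is only known a priori to be $H^1_{\mathrm{loc}}$ away from $K$, so $\partial_\nu u|_{\Gamma_h^\pm}$ must be interpreted in the weak $H^{-1/2}$ trace sense and Green's identity applied in that duality; the elliptic regularity statements quoted in Proposition~\ref{uniform L2 bound} (giving $u\in C^\alpha$ up to the boundary and $u\in H^1(\partial K)$) together with the local $C^{1,\alpha}$ regularity of $u-u'$ away from the corner from Lemma~\ref{Lem:local Holder continuous} make all the boundary integrals over $\Gamma_h^\pm$ absolutely convergent, and the integrability of $u'_N\,\partial_\nu u_0$ on the infinite rays follows from the polynomial bound $|u'_N(\mathbf x)|\le C_N r^N$ in \eqref{eq:lem42} against the exponential decay $|u_0(\mathbf x)|=e^{\tau\mathbf d\cdot\mathbf x}$ with $\mathbf d\cdot\hat{\mathbf x}<-\alpha'<0$ from \eqref{rem d}. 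I expect this regularity/justification bookkeeping, rather than the algebra of collecting terms, to be the main point requiring care; the algebraic rearrangement itself is a direct, if lengthy, substitution.
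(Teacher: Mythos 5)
Your proposal is correct and follows essentially the same route as the paper: Green's second identity on $Q_h$ with $\phi=u_0$, $\psi=u'$, whose left-hand side vanishes since both solve the Helmholtz equation, followed by substituting $\partial_\nu u'=\partial_\nu(u'-u)-\eta u$ via the impedance condition, then inserting the splittings $\eta=\eta(\mathbf 0)+\delta\eta$ and $u'=u'_N+\delta u'_{N+1}$, and finally completing $\int_{\Gamma_h^\pm}u'_N\,\partial_\nu u_0$ to the infinite rays by adding and subtracting the tail. The paper compresses all of this into ``after some calculations,'' while you spell out the term-by-term bookkeeping and the regularity justifications, but the underlying argument is identical.
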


\begin{proof}
By Green's second formula, we get
	\begin{equation}\label{Green formula}
		\int_{{Q_h}} {(\phi\Delta \psi - \psi} \Delta \phi)\mathrm{d}\mathbf x
		=\int_{\Gamma ^ \pm_h} {(\phi{{\partial \psi} \over {\partial \nu }}}  - \psi{{\partial \phi} \over {\partial \nu }})\mathrm {d}\sigma+\int_{\Lambda_h} {(\phi{{\partial \psi} \over {\partial \nu }}}  - \psi{{\partial \phi} \over {\partial \nu }})\mathrm {d}\sigma,
	\end{equation}
where $\phi=u_0$, $\psi=u'$.
Since both $u'$ and $u_0$ satisfy the Helmholtz equation $\Delta u+k^2u=0$ in $Q_h$ and \begin{equation}\label{decompose v}
		\begin{cases}
			\displaystyle \psi=u'=u'-u+u\\
			\displaystyle{{\partial \psi} \over {\partial \nu }} = {{\partial u'} \over {\partial \nu }} = {{\partial (u' - u)} \over {\partial \nu }} +   {{\partial u} \over {\partial \nu }},
		\end{cases}
	\end{equation}
substituting \eqref{decompose v} into \eqref{Green formula}, and using the impedance boundary conditions of $u$ on $\Gamma_h^\pm$ and \eqref{eq u' N}.
After some calculations, we obtain the integral identity \eqref{integral equation}.
\end{proof}

\begin{lem}\label{Lem:Gamma}
 For any real number $b > 0$ and any complex number $\mu$ satisfying $\mathfrak{R}\mu>1$, where $\Gamma(x)$ stands for the Gamma function and $\mathfrak{R}\mu$ stands the real part of $\mu$.
 If $\mathfrak{R}\mu>\mu_0>1$, where the $\mu_0$ is a constant depending $b>0,h>0$, then we have the estimate
 	\begin{equation}\label{eq gamma}
 		\left| {\int_0^h {{r^{b - 1}}{e^{ - \mu r}}} \mathrm {d}r} \right| \le \left|{{\Gamma (b)} \over {{\mu ^b}}}\right| + {{2{e^{{{ - {\mathop{\mathfrak {R}}\nolimits} \mu h} \over 2}}}} \over {{\mathop{\mathfrak {R}}\nolimits} \mu }}.
 	\end{equation}
 \end{lem}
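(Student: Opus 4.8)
The plan is to split the integral $\int_0^h r^{b-1}e^{-\mu r}\,\mathrm{d}r$ at a suitably chosen point, compare the piece near $r=0$ with the complete Gamma integral $\int_0^\infty r^{b-1}e^{-\mu r}\,\mathrm{d}r=\Gamma(b)/\mu^b$ (valid for $\mathfrak{R}\mu>0$ after the standard contour/substitution argument), and bound the tail crudely using the exponential decay. Concretely, I would first write
\begin{equation*}
\int_0^h r^{b-1}e^{-\mu r}\,\mathrm{d}r = \frac{\Gamma(b)}{\mu^b} - \int_h^\infty r^{b-1}e^{-\mu r}\,\mathrm{d}r,
\end{equation*}
so the problem reduces to estimating the tail $\left|\int_h^\infty r^{b-1}e^{-\mu r}\,\mathrm{d}r\right|$. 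Since $|e^{-\mu r}|=e^{-\mathfrak{R}\mu\, r}$, this is at most $\int_h^\infty r^{b-1}e^{-\mathfrak{R}\mu\, r}\,\mathrm{d}r$.

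For the tail I would factor out half of the exponential: on $[h,\infty)$ we have $e^{-\mathfrak{R}\mu\, r}=e^{-\mathfrak{R}\mu\, r/2}\,e^{-\mathfrak{R}\mu\, r/2}\le e^{-\mathfrak{R}\mu\, h/2}\,e^{-\mathfrak{R}\mu\, r/2}$, whence
\begin{equation*}
\int_h^\infty r^{b-1}e^{-\mathfrak{R}\mu\, r}\,\mathrm{d}r \le e^{-\mathfrak{R}\mu\, h/2}\int_0^\infty r^{b-1}e^{-\mathfrak{R}\mu\, r/2}\,\mathrm{d}r = e^{-\mathfrak{R}\mu\, h/2}\,\frac{\Gamma(b)}{(\mathfrak{R}\mu/2)^b} = e^{-\mathfrak{R}\mu\, h/2}\,\frac{2^b\,\Gamma(b)}{(\mathfrak{R}\mu)^b}.
\end{equation*}
This already gives the right exponential factor $e^{-\mathfrak{R}\mu\, h/2}$, and the remaining job is to absorb the algebraic prefactor $2^b\Gamma(b)/(\mathfrak{R}\mu)^b$ into the claimed bound $2/\mathfrak{R}\mu$. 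This is where the hypothesis $\mathfrak{R}\mu>\mu_0>1$ with $\mu_0$ depending on $b$ and $h$ enters: for $\mathfrak{R}\mu$ large enough the super-exponential smallness of $e^{-\mathfrak{R}\mu\, h/2}$ dominates any fixed power $(\mathfrak{R}\mu)^{b-1}$, so $e^{-\mathfrak{R}\mu\, h/2}\cdot 2^b\Gamma(b)(\mathfrak{R}\mu)^{1-b}\le 2$ holds once $\mathfrak{R}\mu>\mu_0(b,h)$; combining this with $|\Gamma(b)/\mu^b|$ for the main term yields \eqref{eq gamma}.

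The main obstacle is purely the bookkeeping of the prefactor: one must choose $\mu_0=\mu_0(b,h)$ so that $e^{-\mu_0 h/2}\,2^b\,\Gamma(b)\,\mu_0^{1-b}\le 2$, and check that the inequality is then preserved for all $\mathfrak{R}\mu>\mu_0$ (which follows since the left side is eventually decreasing in $\mathfrak{R}\mu$, as the exponential beats the power). No delicate analysis is needed — the standard identity $\int_0^\infty r^{b-1}e^{-zr}\,\mathrm{d}r=\Gamma(b)/z^b$ for $\mathfrak{R}z>0$, $b>0$ (real $b$ here, so no branch issues) plus the elementary splitting above suffice. I would present the three displayed estimates in the order: Gamma identity and tail reduction, tail bound via half-exponential trick, absorption of the prefactor using the lower bound on $\mathfrak{R}\mu$.
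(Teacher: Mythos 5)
Your overall plan — rewrite $\int_0^h=\int_0^\infty-\int_h^\infty$, evaluate the whole-line piece as $\Gamma(b)/\mu^b$, and then bound the tail so as to produce the factor $e^{-\mathfrak{R}\mu\,h/2}$ — is exactly the paper's. The difference is in how you handle the tail. The paper absorbs the polynomial into the exponential \emph{pointwise}: once $\mathfrak{R}\mu$ exceeds $2(b-1)/h$ one has $r^{b-1}\le e^{\mathfrak{R}\mu r/2}$ for all $r\ge h$, hence
\[
\int_h^\infty r^{b-1}e^{-\mathfrak{R}\mu\,r}\,\mathrm{d}r\le\int_h^\infty e^{-\mathfrak{R}\mu\,r/2}\,\mathrm{d}r=\frac{2e^{-\mathfrak{R}\mu\,h/2}}{\mathfrak{R}\mu},
\]
which is the stated bound with no leftover prefactor to absorb. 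Your route instead factors out half of the exponential and re-inserts the full Gamma integral, giving the intermediate bound $e^{-\mathfrak{R}\mu\,h/2}\cdot 2^b\Gamma(b)(\mathfrak{R}\mu)^{-b}$.

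This is where your argument slips. To compare that intermediate bound against the target $2e^{-\mathfrak{R}\mu\,h/2}/\mathfrak{R}\mu$, the factor $e^{-\mathfrak{R}\mu\,h/2}$ is common to both sides and cancels; what remains is the bare power inequality
\[
2^b\,\Gamma(b)\,(\mathfrak{R}\mu)^{1-b}\le 2,
\]
with no exponential in sight. Your write-up retains the exponential in this condition (``$e^{-\mathfrak{R}\mu h/2}\cdot 2^b\Gamma(b)(\mathfrak{R}\mu)^{1-b}\le 2$'') and then appeals to ``exponential beats power'' to make it hold for large $\mathfrak{R}\mu$. That proves $\text{tail}\le 2$, which is weaker than and does not imply $\text{tail}\le 2e^{-\mathfrak{R}\mu\,h/2}/\mathfrak{R}\mu$; the exponential smallness you invoke is precisely the part that has already been matched on the right-hand side, so it cannot be spent again. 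With the exponential stripped off, the correct condition $2^b\Gamma(b)(\mathfrak{R}\mu)^{1-b}\le 2$ \emph{does} hold for all large $\mathfrak{R}\mu$ when $b\ge 1$ (the threshold depends on $b$ alone), so your route is salvageable in the regime actually used in the paper ($b=N+1,N+2,\dots$); but it fails outright for $0<b<1$, since then $(\mathfrak{R}\mu)^{1-b}\to\infty$ and the inequality is eventually violated — taking $\mathfrak{R}\mu$ large makes things worse, not better. (The paper's own threshold $\mu_0=2(b-1)/h>1$ also silently presupposes $b>1$, so this restriction is shared; but the paper's pointwise absorption produces the exact right-hand side with no residual prefactor, which is why it reads cleanly.)
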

 \begin{proof}
 	
 By Laplace transform, it yields that
 $$
 	\int_0^h {{r^{b - 1}}{e^{ - \mu r}}} \mathrm {d}r = \int_0^\infty  {{r^{b - 1}}{e^{ - \mu r}}} \mathrm {d}r - \int_h^\infty  {{r^{b - 1}}{e^{ - \mu r}}} \mathrm {d}r.
 $$
 	
 If $\mathfrak{R}\mu \geq 2(b-1)/h=\mu_0>1$, then ${r^{b - 1}} \le {e^{{{\mathfrak R\mu r} \over 2}}}$, for all $r\geq h$, hence we have
 \begin{equation}\label{eq h-infty gamma}
 		\left| {\int_h^\infty  {{r^{b - 1}}{e^{ - \mu r}}\mathrm d r} } \right| \le \int_h^\infty  {{r^{b - 1}}{e^{ - \mathfrak{R}\mu r}}\mathrm d r}  \le \int_h^\infty  {{e^{ - {{\mathfrak{R}\mu r} \over 2}}}\mathrm d r}  = {{2{e^{ - {{\mathfrak{R}\mu h} \over 2}}}} \over {\mathfrak{R}\mu }}.
 \end{equation}

The proof is complete.
\end{proof}

\begin{prop}\label{prop right2}
Let $K, K'\in \mathcal{B}$ be admissible obstacles, $u(\mathbf x),u'(\mathbf x)$ be the solution to impedance scattering problems \eqref{impedance problem} associated with $K$ and $K'$, respectively.
Let $\eta(\mathbf x),\eta'(\mathbf x)\in\Xi_{\mathcal B}\subset C^1(B_R)$ be the impedance parameters of $K,K'$ respectively.
For $Q_h$ defined by \eqref{eq:Qh}, suppose that $\mathbf d$ in the CGO solution $u_0$ given by \eqref{CGO} fulfills \eqref{rem d}.
Let	${\rm Vani}(u'; \mathbf 0) = N$, where $u'(\mathbf x)$ has the expansion \eqref{eq u' N} in $Q_h$.
If \eqref{eq:far-field error} in Theorem~\ref{mian result} is satisfied and the parameter $\tau$ in the CGO solution $u_0$ given by \eqref{CGO} fulfilling
	\begin{align}\label{eq:tau cond}
		\tau>2(N+1)/h,
	\end{align}
then it yields that
			\begin{align}
			\left\vert	\int_{{\Gamma ^ \pm_h }} {{\partial {u_0}(\mathbf x)} \over {\partial \nu }} u'_{N}(\mathbf x)\mathrm d\sigma\right\vert
			&\le {C_1}\mathrm{T}(\varepsilon)h 
			+ {C_2}\mathrm{T}(\varepsilon)h 
			+ {C_3}\mathrm T(\varepsilon)h 
			+ {C_4}\left({1 \over {{\tau ^{N + 1}}}} + {{{e^{ - {{\tau h} \over 2}}}} \over \tau }\right)\notag \\
			&+ {C_5}\left({1 \over {{\tau ^{N + 2}}}} + {{{e^{ - {{\tau h} \over 2}}}} \over \tau }\right)
			+ {C_6}\left({1 \over {{\tau ^{N+2}}}} + {{{e^{ - {{\tau h} \over 2}}}} \over \tau }\right)
			\notag \\
			&+ {C_7}\left({1 \over {{\tau ^{N+3}}}} + {{{e^{ - {{\tau h} \over 2}}}} \over \tau }\right)+ {C_8}\left({1 \over {{\tau ^{N+1}}}} + {{{e^{ - {{\tau h} \over 2}}}}}\right)
			+ {C_9}{e^{ - {h \over 2}\tau }}\notag \\
			&
			+ {C_{10}} \tau{h^{{1 \over 2}}}{e^{ - \alpha '\tau h}}, \label{eq:int inequality}
		\end{align}
where the constants $\alpha'$ is given  in \eqref{rem d} and  positive constants $C_j(j=1,2,...,10),$ depend the a-priori parameters only.
Here $\mathrm T(\varepsilon)=C_b(\ln\ln (1/\varepsilon))^{-\frac{1}{2}}$, where $C_b$ is a positive constant depending on the a-priori parameters only given in \eqref{eq sup b}.
\end{prop}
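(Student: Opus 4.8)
The plan is to estimate each of the eleven terms on the right-hand side of the integral identity \eqref{integral equation} in Proposition~\ref{pro:51}, after substituting the CGO solution $u_0(\mathbf x)=e^{\rho\cdot\mathbf x}$ with $\rho=\tau\mathbf d+\mathrm i\sqrt{k^2+\tau^2}\mathbf d^\perp$. On each edge $\Gamma^\pm_h$ we parametrize $\mathbf x=r\vc$ with $\vc$ a unit vector, so that $u_0(\mathbf x)=e^{\rho\cdot\vc\, r}$ and $\partial u_0/\partial\nu=(\rho\cdot\nu)e^{\rho\cdot\vc\, r}$; crucially $\mathfrak R(\rho\cdot\vc)=\tau(\mathbf d\cdot\vc)$ and, by shrinking the admissible directional cone $\mathcal K_{\alpha'}$ from the CGO lemma if necessary, we may arrange $\mathbf d\cdot\vc<-\alpha''<0$ on the two edges as well (or, alternatively, the decay comes from the $r^{b-1}e^{-\mu r}$ structure with $\mu=-\rho\cdot\vc$ having large positive real part $\gtrsim\tau$). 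This reduces every edge integral to a one-dimensional Laplace-type integral $\int_0^h r^{b-1}e^{-\mu r}\,dr$ to which Lemma~\ref{Lem:Gamma} applies, provided $\mathfrak R\mu\ge 2(b-1)/h$, which is exactly why the hypothesis \eqref{eq:tau cond} $\tau>2(N+1)/h$ is imposed — the largest exponent $b-1$ appearing is $N+2$ (from $\delta u'_{N+1}$ against $\partial u_0/\partial\nu$, which contributes an extra $\tau$ and an $r^{N+1}$).

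Next I would bookkeep the terms in \eqref{integral equation} according to which bound governs them. The three terms involving the boundary differences $u'-u$ and $u-u'$ (the second, third, and fourth integrals on the right of \eqref{integral equation}) are controlled using Proposition~\ref{Lem:Boundary estimation}: on $\Gamma^\pm_h$ one has $|u-u'|+|\nabla(u-u')|\le C_b(\ln|\ln(1/\varepsilon)|)^{-1/2}=\mathrm T(\varepsilon)$, while $|u_0|\le 1$ on $Q_h$ (since $\mathbf d\cdot\vc<0$) and $|\delta\eta|\le M_2|\mathbf x|\le M_2 h$ by \eqref{eq: estimation of eta }; integrating over an edge of length $h$ yields the $C_1\mathrm T(\varepsilon)h$, $C_2\mathrm T(\varepsilon)h$, $C_3\mathrm T(\varepsilon)h$ contributions. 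The terms built purely from $u'_N$, $\delta u'_{N+1}$, $\eta(\mathbf 0)$ and $\delta\eta$ against $u_0$ or $\partial u_0/\partial\nu$ (the fifth through ninth integrals) are estimated by combining the pointwise bounds $|u'_N|\le C_N r^N$, $|\delta u'_{N+1}|\le\mathcal R r^{N+1}$ from \eqref{eq:lem42}, the bounds $|\eta(\mathbf 0)|\le M_1$, $|\delta\eta|\le M_2 r$, $|\partial u_0/\partial\nu|\le|\rho|e^{-\alpha''\tau r}\lesssim\tau e^{-\alpha''\tau r}$, and then Lemma~\ref{Lem:Gamma}; each produces a term of the shape $C_j(\tau^{-(N+j')}+e^{-\tau h/2}/\tau)$ with the power of $\tau$ dictated by the total power of $r$ in the integrand (an extra $r$ from $\delta\eta$ lowers the exponent by one, an extra factor $\tau$ from $\partial u_0/\partial\nu$ does too). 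The $C_8(\tau^{-(N+1)}+e^{-\tau h/2})$ and $C_9 e^{-h\tau/2}$ terms arise from the same mechanism applied to the $\delta u'_{N+1}$-against-$\partial u_0/\partial\nu$ piece and the leftover $r$-tail in Lemma~\ref{Lem:Gamma}.

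The last two terms of \eqref{integral equation} — the integral over $\Gamma^\pm\setminus\Gamma^\pm_h$ and the integral over the arc $\Lambda_h$ — are treated separately. On $\Gamma^\pm\setminus\Gamma^\pm_h$, i.e. $r\ge h$, the integrand $\frac{\partial u_0}{\partial\nu}u'_N$ is dominated by $\tau r^N e^{-\alpha''\tau r}$; since $r^N e^{-\alpha''\tau r/2}$ is bounded and the remaining $e^{-\alpha''\tau r/2}$ integrates to $O(e^{-\alpha''\tau h/2}/\tau)$, this yields a term absorbed into $C_9 e^{-h\tau/2}$ after renaming constants (using $\tau>2(N+1)/h$ to dominate the polynomial). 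On the arc $\Lambda_h=\partial Q_h\cap\partial B_h$, we have $|\mathbf x|=h$ so $|u_0|\le e^{-\alpha'\tau h}$ by \eqref{rem d}, $|\partial u_0/\partial\nu|\le|\rho|e^{-\alpha'\tau h}\lesssim\tau e^{-\alpha'\tau h}$, while $u'$ and $\nabla u'$ on $\partial B_h$ are bounded by a constant times $\mathcal E h^{1/2}$-type quantities via the $H^1$-to-trace bound of Proposition~\ref{uniform L2 bound} and interior analyticity; the arc has length $\lesssim h$, producing the final term $C_{10}\tau h^{1/2}e^{-\alpha'\tau h}$. The main obstacle I anticipate is bookkeeping discipline: one must track, for each of the eleven terms, the exact power of $r$ (to read off the power of $\tau$ via Lemma~\ref{Lem:Gamma}), the exact power of $\tau$ coming from $\rho$ when $\partial_\nu$ hits $u_0$, and whether the $\delta\eta$ factor is present — and then verify that the cone $\mathcal K_{\alpha'}$ can simultaneously be chosen so that $\mathbf d$ points "away" from both edges and the arc, so that every exponential is genuinely decaying in $\tau$. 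Collecting everything and relabeling constants gives \eqref{eq:int inequality}.
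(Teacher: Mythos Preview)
Your proposal is correct and follows essentially the same approach as the paper: estimate each term on the right of the integral identity \eqref{integral equation} using the pointwise decay $|u_0|\le e^{-\alpha'\tau r}$ from \eqref{rem d}, Proposition~\ref{Lem:Boundary estimation} for the three $(u-u')$-terms, Lemma~\ref{Lem:Gamma} for the Laplace-type edge integrals involving $u'_N$, $\delta u'_{N+1}$, $\eta(\mathbf 0)$, $\delta\eta$, the tail estimate \eqref{eq h-infty gamma} for the $\Gamma^\pm\setminus\Gamma^\pm_h$ piece, and a trace-theorem argument on the arc $\Lambda_h$. The only cosmetic differences are that the paper applies Cauchy--Schwarz explicitly for the $\mathrm T(\varepsilon)h$ terms and that the decay condition on the edges is already built into \eqref{rem d} (since $\Gamma^\pm_h\subset\overline{Q_h}$), so no further shrinking of $\mathcal K_{\alpha'}$ is needed.
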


\begin{proof}
By \eqref{integral equation}, to prove \eqref{eq:int inequality}, we only need to estimate each term on the right-hand side of (\ref{integral equation}).
According to \eqref{rem d}, one has
    \begin{equation}\label{eq:u0 bound}
	\left| {{u_0}(\mathbf x)} \right| \le {e^{{\mathop{\rm \Re }\nolimits} \rho  \cdot \mathbf x}} \le {e^{ - \alpha '\tau r}}\leq 1,
    \end{equation}
where $0<\alpha'<1$ is given in \eqref{rem d}.
For the estimation of impedance parameters $\eta(\mathbf x)$, the constants $M_1$ and $M_2$ come from \eqref{eq:xi} and \eqref{eq: estimation of eta } respectively.
Combining \eqref{eq:u0 bound} with Proposition~\ref{Lem:Boundary estimation}, we can get
    \begin{align}
	\left\vert \int_{{\Gamma ^ \pm_h }} {{u_0}(\mathbf x){{\partial [u'(\mathbf x) - u(\mathbf x)]} \over {\partial \nu }}} \mathrm d\sigma \right\vert \label{right 1}
	&\leq \sqrt {\sigma ({\Gamma_h ^ \pm })} {\left({\int_{{\Gamma_h ^ \pm }} {\left| {{u_0}(\mathbf x){{\partial (u' - u)} \over {\partial \nu }}} \right|} ^2}\mathrm d\sigma\right)^{{1 \over 2}}} \notag \\
	&\leq \sqrt {2h} \left| {{u_0}(\mathbf x)} \right|\sqrt {2h}{\left\| {\nabla (u - u')} \right\|_{{L^\infty }({\Gamma_h ^ \pm })}} \notag \\
	&\leq 2h{\left\| {\nabla (u - u')} \right\|_{{L^\infty }({\Gamma_h ^ \pm })}} \leq C_1\mathrm T(\varepsilon)h,
	\end{align}

	\begin{align}
    \left\vert- \eta(\mathbf 0) \int_{{\Gamma_h ^ \pm }} {{u_0}(\mathbf x)[u(\mathbf x) - u'(\mathbf x)]} \mathrm d\sigma \right\vert
    &\leq \left| \eta(\mathbf 0)  \right|\sqrt {\sigma ({\Gamma_h ^ \pm })} {\left(\int_{{\Gamma_h ^ \pm }} {{{\left| {{u_0}(\mathbf x)[u(\mathbf x) - u'(\mathbf x)]} \right|}^2}} \mathrm d\sigma\right)^{{1 \over 2}}}  \notag \\
    &\leq \sqrt {2h} \left| {{u_0}(\mathbf x)} \right|M_1 \sqrt {2h}{\left\| {u - u'} \right\|_{{L^\infty }({\Gamma_h ^ \pm })}} \\
    &\leq 2 M_1h{\left\| {u - u'} \right\|_{{L^\infty }({\Gamma_h ^ \pm })}} \leq {C_2}\mathrm T(\varepsilon)h.\notag
    \end{align}
and
    \begin{align}
    \left\vert- \int_{{\Gamma ^ \pm_h }} \delta\eta(\mathbf x){{u_0}(\mathbf x)[u (\mathbf x)- u'(\mathbf x)]} \mathrm d\sigma \right\vert
    &\leq \left| \delta\eta(\mathbf x)  \right|\sqrt {\sigma ({\Gamma_h ^ \pm })} {\left(\int_{{\Gamma_h ^ \pm }} {{{\left| {{u_0}(\mathbf x)[u(\mathbf x) - u'(\mathbf x)]} \right|}^2}} \mathrm d\sigma\right)^{{1 \over 2}}} \notag  \\
    &\leq \sqrt {2h} \left| {{u_0}(\mathbf x)} \right|M_2 \sqrt {2h}{\left\| {u - u'} \right\|_{{L^\infty }({\Gamma_h ^ \pm })}}\\
    &\leq 2 M_2 h{\left\| {u - u'} \right\|_{{L^\infty }({\Gamma_h ^ \pm })}} \leq {C_3}\mathrm T(\varepsilon)h.\notag
    \end{align}
Using \eqref{eq:tau cond}, by virtue of \eqref{eq gamma} in   Lemma~\ref{Lem:Gamma}, we have the following estimates
    \begin{align}
    \left| 	-\eta(\mathbf 0) \int_{{\Gamma ^ \pm_h }} {{u_0}(\mathbf x)} u'_{N}(\mathbf x)\mathrm d\sigma\right|
    &\leq2 \left| \eta(\mathbf 0)  \right|C_{N}\int_0^h {{r^{N}}{e^{ - \alpha '\tau r}}\mathrm d r}\\
    &\leq 2M_1C_{N}\left[{{\Gamma (N + 1)} \over {{{(\alpha '\tau )}^{N + 1}}}} + {{2{e^{ - {{\tau h} \over 2}}}} \over {\alpha '\tau }}\right]\notag \\
    &\leq {C_4}\left({1 \over {{\tau ^{N + 1}}}} + {{{e^{ - {{\tau h} \over 2}}}} \over \tau }\right), \notag 
    \end{align}
    
    \begin{align}
    \left| 	- \int_{{\Gamma ^ \pm_h }} \delta\eta(\mathbf x){{u_0}(\mathbf x)} u'_{N}(\mathbf x)\mathrm d\sigma\right|
    &\leq2 M_2 C_{N}\int_0^h {{r^{N+1}}{e^{ - \alpha '\tau r}}\mathrm d r}\\
    &\leq 2M_2 C_{N}\left[{{\Gamma (N + 2)} \over {{{(\alpha '\tau )}^{N + 2}}}} + {{2{e^{ - {{\tau h} \over 2}}}} \over {\alpha '\tau }}\right]\notag \\
    &\leq {C_5}\left({1 \over {{\tau ^{N + 2}}}} + {{{e^{ - {{\tau h} \over 2}}}} \over \tau }\right), \notag 
    \end{align}
    
    \begin{align}
    \left| -\eta(\mathbf 0)\int_{{\Gamma ^ \pm_h }} \delta{u'_{N+1}} (\mathbf x){u_0}(\mathbf x)\mathrm d\sigma\right|
    &\leq2 \left| \eta(\mathbf 0)  \right|\mathcal{R}\int_0^h {{r^{N+1}}{e^{ - \alpha '\tau r}}\mathrm d r}  \\
    &\leq 2M_1\mathcal{R}\left[{{\Gamma (N + 2)} \over {{{(\alpha '\tau )}^{N + 2}}}} + {{2{e^{ - {{\tau h} \over 2}}}} \over {\alpha '\tau }}\right]\notag \\
    &\leq {C_6}\left({1 \over {{\tau ^{N+2}}}} + {{{e^{ - {{\tau h} \over 2}}}} \over \tau }\right), \notag 
   \end{align}
   
   \begin{align}
    \left| -\int_{{\Gamma ^ \pm_h }} \delta\eta(\mathbf x)\delta{u'_{N+1}} (\mathbf x){u_0}(\mathbf x)\mathrm d\sigma\right|
    &\leq2 M_2\mathcal{R}\int_0^h {{r^{N+2}}{e^{ - \alpha '\tau r}}\mathrm d r}  \\
    &\leq 2M_2\mathcal{R}\left[{{\Gamma (N + 3)} \over {{{(\alpha '\tau )}^{N + 3}}}} + {{2{e^{ - {{\tau h} \over 2}}}} \over {\alpha '\tau }}\right]\notag \\
    &\leq {C_7}\left({1 \over {{\tau ^{N+3}}}} + {{{e^{ - {{\tau h} \over 2}}}} \over \tau }\right), \notag 
   \end{align}
and 
   \begin{align}\label{eq:no number1}
    \left| -\int_{{\Gamma ^ \pm_h }} \delta{u'_{N+1}(\mathbf x){ \frac{\partial {u_0}(\mathbf x)}{\partial \nu }}} \mathrm d\sigma\right|
    &\leq 2\mathcal{R}\sqrt {{k^2} + 2{\tau ^2}} \int_0^h {{r^{N+1}}{e^{ - \alpha '\tau r}}\mathrm d r} \\
    &\leq 2\mathcal{R}\sqrt {{k^2} + 2{\tau ^2}}\left[{{\Gamma (N + 2)} \over {{{(\alpha '\tau )}^{N + 2}}}} + {{2{e^{ - {{\tau h} \over 2}}}} \over {\alpha '\tau }}\right]\notag \\
    &\leq C_8\left({1 \over {{\tau ^{N+1}}}} + {{{e^{ - {{\tau h} \over 2}}}}}\right). \notag
    \end{align}
    By virtue of \eqref{eq h-infty gamma}, we can obtain
    \begin{align}\label{eq:no number2}
	\left|\int_{{\Gamma ^ \pm }\backslash \Gamma _h^ \pm } {{{\partial {u_0}({\bf{x}})} \over {\partial \nu }}} {u'_N}({\bf{x}}){\rm{d}}\sigma\right|
	&\leq 2C_N\sqrt{k^2+2\tau^2}  {2 \over \tau }{e^{ - {h \over 2}\tau }} \\
	&\leq C_9{e^{ - {h \over 2}\tau }}. \notag
    \end{align}

Finally, we estimate the boundary integral over arc $\Lambda_h$. Direct calculations, it yields that
    \begin{align}\notag
    {\left\| {{u_0}} \right\|_{{H^1}(\Lambda_h )}}  \le \sqrt {1 + 2{\tau ^2} + {k^2}} \theta _0^{{1 \over 2}}{h^{{1 \over 2}}}{e^{ - \alpha '\tau h}},	\ {\left\| {{{\partial {u_0}} \over {\partial \nu }}} \right\|_{{L^2}(\Lambda_h )}}  \le \sqrt {2{\tau ^2} + {k^2}} \theta _0^{{1 \over 2}}{h^{{1 \over 2}}}{e^{ - \alpha '\tau h}},
    \end{align}
where $\theta_0$ is the opening angle of $Q_h$. Hence, by virtue of the trace theorem, one has
    
    \begin{align}\label{right 6}
    \left\vert\int_{\Lambda_h}  {{u_0}{{\partial u'} \over {\partial \nu }} - u'} {{\partial {u_0}} \over {\partial \nu }}\mathrm d\sigma\right\vert
    &\leq {\left\| {{u_0}} \right\|_{{H^{{1 \over 2}}}(\Lambda_h )}}{\left\| {{{\partial u'} \over {\partial \nu }}} \right\|_{{H^{ - {1 \over 2}}}(\Lambda_h )}} + {\left\| {u'} \right\|_{{L^2}(\Lambda_h )}}{\left\| {{{\partial {u_0}} \over {\partial \nu }}} \right\|_{{L^2}(\Lambda_h )}}\notag \\
    &\leq {\left\| {{u_0}} \right\|_{{H^1}(\Lambda_h )}}{\left\| {u'} \right\|_{{H^1}({Q_h})}} + {\left\| {u'} \right\|_{{H^1}({Q_h})}}{\left\| {{{\partial {u_0}} \over {\partial \nu }}} \right\|_{{L^2}(\Lambda_h )}}\notag \\
    &\leq {\left\| {u'} \right\|_{{H^1}({Q_h})}}\left({\left\| {{u_0}} \right\|_{{H^1}(\Lambda_h )}} + {\left\| {{{\partial {u_0}} \over {\partial \nu }}} \right\|_{{L^2}\left(\Lambda_h \right)}}\right)\notag \\
    &\leq 2\mathcal M\sqrt {1 + 2{\tau ^2} + {k^2}} \theta _0^{{1 \over 2}}{h^{{1 \over 2}}}{e^{ - \alpha '\tau h}} \leq C_{10} \tau{h^{{1 \over 2}}}{e^{ - \alpha '\tau h}}.
    \end{align}
    
Using \eqref{right 1} to \eqref{right 6} we obtain \eqref{eq:int inequality}.
\end{proof}


In Proposition~\ref{prop left 2}, we give a lower bound for the integral defined in \eqref{eq:pro 53}, which plays an important role in the proof of Theorem~\ref{mian result}.  

\begin{prop}\label{prop left 2}
Suppose that $u'$ is the solution to \eqref{impedance problem} associated with the obstacle $K'$ and $\Gamma^\pm$ are defined in \eqref{eq Gamma+_}.
Let $u_0(\mathbf x)$ be the CGO solution defined by \eqref{CGO} and $u'_N(\mathbf x)$ be given by \eqref{u' N}, where $N$ is the vanishing order of $u'$ at $\mathbf 0$ with $N\in \mathbb N\cup \{0\}$.
	
If 
\begin{align}\label{eq:tau pro53}
		\tau>\max({2(N+1)}/{h},k,\tau_0), 
\end{align}
where $\tau_0\in \mathbb R_+$ is sufficiently large,  then it yields that
\begin{equation}\label{eq:pro 53}
	\left| {\int_{\Gamma^\pm } {{{\partial {u_0}({\bf{x}})} \over {\partial \nu }}} {{u'}_N}({\bf{x}}){\rm{d}}\sigma } \right| \ge 
		{C_{(N,k,{\theta _0})}}\frac{1} {\tau ^N},\quad N\in \mathbb N\cup \{0\}  , 
\end{equation}
where the positive constant $C_{(N,k,\theta_0)}$ depends on the a-priori parameters only.
\end{prop}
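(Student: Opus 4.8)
The plan is to compute the left-hand integral $\int_{\Gamma^\pm}\frac{\partial u_0}{\partial\nu}\,u'_N\,\mathrm d\sigma$ explicitly, exploiting the fact that $u'_N(\mathbf x)=(a_N e^{\mathrm i N\theta}+b_N e^{-\mathrm i N\theta})\frac{k^N}{2^N N!}r^N$ is a \emph{homogeneous} polynomial of degree $N$ on each ray $\Gamma^+$ and $\Gamma^-$, and that $u_0(\mathbf x)=e^{\rho\cdot\mathbf x}$ decays like $e^{-\alpha'\tau r}$ along those rays (by \eqref{rem d}). Along $\Gamma^-$ (the $x_1^+$-axis) we have $\theta=0$, $\nu=\nu_2=(0,-1)$, $u'_N=(a_N+b_N)\frac{k^N}{2^N N!}r^N$, and $\frac{\partial u_0}{\partial\nu}=-\rho_2 e^{\rho_1 r}$; similarly along $\Gamma^+$ with $\theta=\theta_0$, $\nu=\nu_1$, and $u_0$ restricted to that ray equals $e^{(\rho\cdot(\cos\theta_0,\sin\theta_0))r}$. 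In each case the integral reduces to a scalar multiple of $\int_0^\infty r^N e^{-\mu r}\,\mathrm d r=\Gamma(N+1)/\mu^{N+1}$ for an appropriate complex $\mu$ with $\mathfrak R\mu\sim\alpha'\tau$ (the integrability over $[0,\infty)$ is guaranteed by \eqref{eq:tau pro53} and \eqref{rem d}, since $\mathfrak R(\rho\cdot\hat{\mathbf x})\le-\alpha'\tau<0$ on both rays).

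Carrying this out, the sum of the two ray-integrals takes the schematic form
\begin{equation}\notag
	\int_{\Gamma^\pm}\frac{\partial u_0}{\partial\nu}\,u'_N\,\mathrm d\sigma
	=\frac{k^N}{2^N N!}\,\Gamma(N+1)\left[\frac{c_+(a_N,b_N,\theta_0)\,(\nu_1\cdot\rho)}{\mu_+^{\,N+1}}+\frac{c_-(a_N,b_N)\,(\nu_2\cdot\rho)}{\mu_-^{\,N+1}}\right],
\end{equation}
where $\mu_\pm=-\rho\cdot\hat{\mathbf x}_\pm$ are the (complex) exponential rates on the two rays, $|\mu_\pm|\asymp\tau$ for large $\tau$, and $\nu_j\cdot\rho$ is $O(\tau)$. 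The key point is that the bracketed quantity, after factoring out $1/\tau^{N+1}$ and one factor of $\tau$ from $\nu_j\cdot\rho$, converges as $\tau\to\infty$ to a fixed nonzero constant depending only on $N$, $k$, $\theta_0$ and the pair $(a_N,b_N)\ne\mathbf 0$. To see non-vanishing of the limit one writes $\rho/\tau\to\mathbf d+\mathrm i\,\mathbf d^\perp\cdot\lim\sqrt{1+k^2/\tau^2}\to\mathbf d+\mathrm i\mathbf d^\perp$ and checks that the resulting linear combination of $e^{\pm\mathrm i N\theta_0}$-type terms against the unit vectors $\mathbf d+\mathrm i\mathbf d^\perp$ cannot vanish for all admissible $\mathbf d\in\mathcal K_{\alpha'}$ unless $(a_N,b_N)=\mathbf 0$; this is a finite-dimensional algebraic identity of the same flavour as the non-degeneracy arguments used in corner-scattering uniqueness proofs. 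Hence for $\tau$ beyond a threshold $\tau_0$ the bracket stays bounded below by a positive constant, giving \eqref{eq:pro 53}.

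I expect the main obstacle to be precisely this last non-degeneracy step: one must show that the leading (as $\tau\to\infty$) coefficient of $1/\tau^N$ does not accidentally vanish, uniformly over the choice of $\mathbf d\in\mathcal K_{\alpha'}$ and the (a-priori unknown) leading coefficients $(a_N,b_N)$. The mitigation is that $\mathcal K_{\alpha'}$ is an open set, so it suffices to exhibit \emph{one} admissible direction $\mathbf d$ for which the coefficient is nonzero for every $(a_N,b_N)\ne\mathbf 0$; a convenient choice is $\mathbf d$ bisecting the exterior cone at $\mathbf 0$, for which the two rates $\mu_\pm$ and the normals $\nu_1,\nu_2$ enter symmetrically and the resulting determinant can be evaluated in closed form. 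A secondary technical point, handled by Lemma~\ref{Lem:Gamma} together with \eqref{eq:tau pro53}, is that replacing $\int_0^\infty$ by $\int_0^h$ (which is what actually appears after truncation to $\Gamma^\pm_h$, cf.\ the relation between $\Gamma^\pm$ and $\Gamma^\pm_h$ in \eqref{integral equation}) introduces only an exponentially small error $O(\tau^{-1}e^{-\tau h/2})$, which is absorbed into the stated lower bound once $\tau_0$ is taken large; the estimates of this type are exactly the ones already assembled in Proposition~\ref{prop right2}.
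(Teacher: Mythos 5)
Your overall strategy --- evaluate the two ray integrals by the Laplace transform $\int_0^\infty r^N e^{-\mu r}\,\mathrm d r=\Gamma(N+1)/\mu^{N+1}$, combine them, factor out the correct power of $\tau$, and show the leading coefficient as $\tau\to\infty$ is bounded away from zero for an admissible direction $\mathbf d$ --- is precisely the route taken in the paper, and you correctly identify the crux as the non-degeneracy of that leading coefficient. The difficulty is that you leave this step as a sketch, and the device you propose (take $\mathbf d$ the exterior bisector and evaluate a closed-form determinant) does not, as written, handle the obstruction that actually arises. Writing, as the paper does, $\mathbf z = C_1\,\mathbf a^\top\mathbf b_+ + C_2\,\mathbf a^\top\mathbf b_-$ with $\mathbf a=(a_N,b_N)^\top$ and $\mathbf b_\pm$ fixed, the natural lower bound is $|\mathbf z|^2\geq\lambda_2\,(|a_N|^2+|b_N|^2)$, where $\lambda_2$ is the smaller eigenvalue of the Hermitian matrix $|C_1|^2\overline{\mathbf b}_+\mathbf b_+^\top+|C_2|^2\overline{\mathbf b}_-\mathbf b_-^\top$, and one must then show $\lambda_2$ is bounded below by a positive multiple of $\tau^{2N+4}$. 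This works away from certain \emph{resonant} opening angles (the paper's Case~1), but at resonance (Case~2, $\theta_0=\frac{\pi}{2N}+\frac{\ell\pi}{N}$, $\ell\in\{1,\dots,N-1\}$) the spectral bound degenerates, and the paper switches to a qualitatively different argument: it regards the $\tau\to\infty$ limit of $\mathbf z$ as an analytic function $f(\varphi)$ of the polar angle $\varphi$ of $\mathbf d$, shows $f$ is not identically zero whenever $(a_N,b_N)\neq\mathbf 0$ and hence has only finitely many zeros on $(\theta_0+\frac{\pi}{2},\frac{3\pi}{2})$, and then picks $\varphi_0$ away from those zeros. There is in addition a separate short computation for $N=0$ (Case~3). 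Your single bisector choice of $\mathbf d$ is not shown to avoid the exceptional zeros in the resonant case, and your proposal makes no mention of the resonant/non-resonant dichotomy at all, so there is a genuine gap exactly at the step you flagged.

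A smaller point of confusion: the estimate \eqref{eq:pro 53} is already stated over the \emph{infinite} rays $\Gamma^\pm$, so there is no $[0,h]\to[0,\infty)$ truncation error in this proposition. The tail $\Gamma^\pm\setminus\Gamma^\pm_h$ is estimated as part of Proposition~\ref{prop right2} via \eqref{eq h-infty gamma}, not here; you have the direction of the comparison reversed.
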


\begin{proof}
Recall that $\Gamma^\pm$ are given in \eqref{eq Gamma+_}. Hence, we can choose $\mathbf{d} = \left(\cos\varphi,\sin\varphi \right)$ and $\mathbf{d^ \bot } = \left(-\sin\varphi,\cos\varphi\right)$ with $\varphi\in (\theta_0+\frac{\pi}{2},\frac{3\pi}{2})$ such that $\mathbf d$ fulfills \eqref{rem d}. 
The unit exterior normal vectors $\nu_1$ and $\nu_2$ to $\Gamma^+$ and $\Gamma^-$ are given by \eqref{eq Gamma parameters } respectively. 
Therefore, it can be obtained that
   \begin{align}\label{eq:normal vector}
    	\frac{\partial u_{0}}{\partial \nu }\Big|_{\nu=\nu_1} 
	&=\tau \left[\sin \left( \varphi -\theta_{0} \right)  +\mathrm{i} \sqrt{1+\frac{k^{2}}{\tau^{2} } } \cos \left( \varphi -\theta_{0} \right)  \right]u_{0},\\
	\frac{\partial u_{0}}{\partial \nu}\Big|_{\nu=\nu_2}
	&=\tau\left(-\sin\varphi-\mathrm i\sqrt{\frac{k^2}{\tau^2}+1}\cos\varphi\right) u_{0}.   \nonumber
    \end{align}
   
Denote
    \begin{align}\label{eq C'N C''N}
    	&C^{\prime }_{N}=\left( {a_{N}}  {e^{\mathrm{i} N\theta_{0} }}  +{b_{N}}  {e^{-\mathrm{i} N\theta_{0} }}  \right)  {\frac{{{k^{N}}  }  }{{{2^{N}}  N!}  } }  ={\frac{{{k^{N}}  }  }{{{2^{N}}  N!}  } }  \left( \begin{matrix}a_{N}&b_{N}\end{matrix} \right)  \left( \begin{matrix}e^{\mathrm{i} N\theta_{0} }\\ e^{-\mathrm{i} N\theta_{0} }\end{matrix} \right):=\mathbf a^\top \mathbf b_+  ,\nonumber \\ 
    	&C^{\prime \prime }_{N}=\left( {a_{N}}  +{b_{N}}  \right)  {\frac{{{k^{N}}  }  }{{{2^{N}}  N!}  } }  ={\frac{{{k^{N}}  }  }{{{2^{N}}  N!}  } }  \left( \begin{matrix}a_{N}&b_{N}\end{matrix} \right)  \left( \begin{matrix}1\\ 1\end{matrix} \right):=\mathbf a^\top \mathbf b_-,
    \end{align}      
where $\mathbf a=(a_N, b_N)^\top $, $\mathbf b_+=(e^{\mathrm{i} N\theta_{0} }, e^{-\mathrm{i} N\theta_{0} })^\top $, and  $\mathbf b_-=(1, 1)^\top $. 
Recall that $u'_N(\mathbf x)$ is given in \eqref{eq u' N}.
It yields that
        \begin{equation}\label{eq C'C''}
    	u'_N(\mathbf{x})=C'_Nr^N \mbox{ with } \mathbf{x}\in\Gamma^+,\quad 
    	u'_N(\mathbf{x})=C''_Nr^N \mbox{ with }   \mathbf{x}\in\Gamma^-, 
    	    \end{equation}   
where $|a_N|+|b_N|\neq 0$.
    
By Laplace transform, we have
	\begin{equation}\label{eq Laplace trans}
		\begin{aligned}
			\int_{\Gamma^+}r^Ne^{\rho \cdot \mathbf{\hat{x}_+}r}\mathrm d \sigma
			&={{\Gamma (N+1)} \over { [- (\tau \mathbf d + \mathrm i\sqrt {{k^2} + {\tau ^2}} {\mathbf d^ \bot }) \cdot (\cos{\theta _0},\sin{\theta _0})]^{N+1}}},\\
			\int_{\Gamma^-}r^Ne^{\rho \cdot \mathbf{\hat{x}_-}r}\mathrm d \sigma
			&={{\Gamma (N+1)} \over [{ - (\tau  \mathbf d + \mathrm i\sqrt {{k^2} + {\tau ^2}} {\mathbf d^ \bot }) \cdot (1,0)}]^{N+1}},
		\end{aligned}
	\end{equation}
where  $\hat{\mathbf{x} }_{+}=[\cos\theta_0,\sin \theta_0]$ and $ \hat{\mathbf{x} }_{-}=[1,0]$.
For simplicity, we denote 
	\begin{align*}
		\mathbf z_1
		&:=-\tau \left[\cos \left( \varphi -\theta_{0} \right)  +\mathrm{i} \sqrt{\frac{k^{2}}{\tau^{2}}+1 } \sin \left( \varphi -\theta_{0} \right)\right], \\
		\mathbf z_2
		&:=\tau \left[\sin \left( \varphi -\theta_{0} \right)  +\mathrm{i} \sqrt{\frac{k^{2}}{\tau^{2} }+1 } \cos \left( \varphi -\theta_{0} \right)  \right],\\
		\mathbf z_3
		&:=\tau\left(-\cos\varphi+\mathrm i \sqrt{\frac{k^2}{\tau^2}+1} \sin \varphi\right), \
		\mathbf z_4
		:=\tau\left(-\sin\varphi-\mathrm i\sqrt{\frac{k^2}{\tau^2}+1}\cos\varphi\right).       	\end{align*}
		
Let us denote 
\begin{align}\label{eq:C1C2}
	C_1:=\mathbf z_2\mathbf z_3^{N+1}, \ C_2:=\mathbf z_4\mathbf z_1^{N+1}.
\end{align}
Combining \eqref{eq:normal vector}, \eqref{eq C'C''} with \eqref{eq Laplace trans}, one has
    \begin{align}\label{eq Ctau}
    &\int_{{\Gamma ^ + }} {{{\partial {u_0}({\bf{x}})} \over {\partial \nu }}} {u'_N}({\bf{x}}){\rm{d}}\sigma  + \int_{{\Gamma ^ - }} {{{\partial {u_0}({\bf{x}})} \over {\partial \nu }}} {u'_N}({\bf{x}}){\rm{d}}\sigma\\
    &= C'_N\int_{\Gamma^+} {\mathbf z_2 e^{\rho \cdot \mathbf{\hat{x}_+}r}{r^N}{\rm{d}}\sigma}
     +C'{'_N}\int_{\Gamma^-}\tau{e^{\rho \cdot \mathbf{\hat{x}_-}r}}{r^N}{\rm{d}}\sigma\nonumber\\
    &={\frac{{\Gamma (N+1)}  C^{\prime }_{N}{\mathbf z_2}  }{{[-(\tau \mathbf{d} +\mathrm{i} \sqrt{{k^{2}}  +{\tau^{2} }  } {\mathbf{d}^{\bot } }  )\cdot (\cos {\theta_{0} }  ,\sin {\theta_{0} }  )]^{N+1}}  } }
    +{\frac{{\Gamma (N+1)}  C^{\prime }{^{\prime }_{N}  }  \mathbf z_4 }{[{-(\tau \mathbf{d} +\mathrm{i} \sqrt{{k^{2}}  +{\tau^{2} }  } {\mathbf{d}^{\bot } }  )\cdot (1,0)}  ]^{N+1}} } \nonumber \\
    &={\Gamma (N+1)}\frac{{ {{{C^\prime_{N} \mathbf z_2  \mathbf z_3}^{N+1}  }  +{{C^{\prime \prime }_{N}\mathbf z_4 \mathbf z_1}^{N+1}  }  } }}{{{{{\left( {\mathbf z_1\mathbf z_3}  \right)  }^{N+1}  }  }  }} 
    ={\Gamma (N+1)}  \frac{{{k^{N}}  }  }{{{2^{N}}  N!}  } \frac{{{{{\mathbf{a}^{\top } C_{1}\mathbf{b}_{+} }  }  +\mathbf{a}^{\top } C_{2}\mathbf{b}_{-}   }  }  }  {{{{{\left( {\mathbf{z}_{1} \mathbf z_3}  \right)  }^{N+1}  }  }  }  } \nonumber\\
    &={\Gamma (N+1)}\frac{{{k^{N}}  }  }{{{2^{N}}  N!}  } \frac{\mathbf z}{\mathbf w},\nonumber
    \end{align}
 where
    $$
     \mathbf z={{{{{\mathbf{a}^{\top } C_{1}\mathbf{b}_{+} }  }  +\mathbf{a}^{\top } C_{2}\mathbf{b}_{-} {}  }  }  }       ,\quad     \mathbf w= {{{{\left( \mathbf z_1 \mathbf z_3  \right) }^{N+1}  }  }  }.
    $$
    
We first derive the lower bound of $\mathbf z$ in \eqref{eq Ctau}. 
We shall distinguish three distinct cases.

\medskip 
\noindent {\bf Case 1:}  ${\rm Vani}(u',\mathbf 0)=N\in \mathbb N$, and $\theta_0\neq\frac{\pi}{2N}+\frac{\ell \pi}{N}, \ell\in\{1,\ldots, N-1\}.$

It can be verified that 
    \begin{align*}
    |\mathbf z|^2
    &=\mathbf{a}^{H} \left( |C_{1}|^{2}\overline{\mathbf{b} }_{+} \mathbf{b}^{\top }_{+} +|C_{2}|^{2}\overline{\mathbf{b} }_{-} \mathbf{b}^{\top }_{-} \right)  \mathbf{a} \\
    &\quad + \mathbf{a}^{H} \left( \overline{C}_{1} \overline{\mathbf{b} }_{+} C_{2}\mathbf{b}^{\top }_{-} +\overline{C}_{2} \overline{\mathbf{b} }_{-} C_{1}\mathbf{b}^{\top }_{+} \right)  \mathbf{a} \\
    &\geq  \mathbf{a}^{H} \left( |C_{1}|^{2}\overline{\mathbf{b} }_{+} \mathbf{b}^{\top }_{+} +|C_{2}|^{2}\overline{\mathbf{b} }_{-} \mathbf{b}^{\top }_{-} \right) \mathbf{a}.
    \end{align*}
        
Let $\lambda_1$ and $\lambda_2$ be two real eigenvalues of $ |C_{1}|^{2}\overline{\mathbf{b} }_{+} \mathbf{b}^{\top }_{+} +|C_{2}|^{2}\overline{\mathbf{b} }_{-} \mathbf{b}^{\top }_{-} $. 
Through direct calculation we have
    $$
    \lambda_{1} =|C_{1}|^{2}+|C_{2}|^{2}+\sqrt{|C_{1}|^{4}+|C_{2}|^{4}+|C_{1}|^{2}|C_{2}|^{2}\left( 1-\cos^{2} 2N\theta_{0} +2\cos 2N\theta_{0} \right)  } ,
    $$
and
    $$
    \lambda_{2} =|C_{1}|^{2}+|C_{2}|^{2}-\sqrt{|C_{1}|^{4}+|C_{2}|^{4}+|C_{1}|^{2}|C_{2}|^{2}\left( 1-\cos^{2} 2N\theta_{0} +2\cos 2N\theta_{0} \right)  }.
    $$
Under the assumption, it implies that $\lambda_1>\lambda_2>0$. 
Therefore we have
    \begin{align*}
    	|\mathbf z|^2\geq \lambda_2 (|a_N|^2+|b_N|^2).
    \end{align*}
We shall deduce the asymptotic lower bound for $\lambda$ when $\tau\rightarrow \infty$. 
By the definition of $C_1$ and $C_2$, there exists a choice of $\varphi\in(\theta_0+\frac{\pi}{2},\frac{3\pi}{2})$ such that $|C_1|^2\neq|C_2|^2$, without loss of generality, we assume $|C_1|>|C_2|$. 
Therefore we can deduce that
    \begin{align}\label{eq:upbound C}
    	&\sqrt{|C_{1}|^{4}+|C_{2}|^{4}+|C_{1}|^{2}|C_{2}|^{2}\left( 1-\cos^{2} 2N\theta_{0} +2\cos 2N\theta_{0} \right)  }  \\
    	&\leq |C_1|^2\sqrt{3-\cos^22N\theta_0+2\cos2N\theta_0}\notag \\
    	&= \tau^{2N+4}( [1+\frac{k^{2}}{\tau^{2} } \cos^{2} (\varphi -\theta_{0} )][1+\frac{k^{2}}{\tau^{2} } \sin^{2} \varphi ]^{N+1})\sqrt{3-\cos^22N\theta_0+2\cos2N\theta_0}. \notag
    \end{align}
It yields that 
    \begin{align*}
    	|C_1|^2+|C_2|^2 &=\left[ \tau^{2} +k^{2}\cos^{2} (\varphi -\theta_{0} \right]  (\tau^{2} +k^{2}\sin^{2} \varphi )^{N+1} \\
    	&\quad +\left( \tau^{2} +k^{2}\cos^{2} \varphi \right)  [\tau^{2} +k^{2}\sin^{2} (\varphi -\theta_{0} )]^{N+1}\notag  \\
    	&=\tau^{2N+4} \{ [1+\frac{k^{2}}{\tau^{2} } \cos^{2} \left( \varphi -\theta_{0} \right)  ](1+\frac{k^{2}}{\tau^{2} } \sin^{2} \varphi )^{N+1}\\
    	&\quad +( 1+\frac{k^{2}}{\tau^{2} } \cos^{2} \varphi)  [1+\frac{k^{2}}{\tau^{2} } \sin^{2} (\varphi -\theta_{0} )]^{N+1}\}.
    \end{align*}
Combining the above inequality and \eqref{eq:upbound C}, we obtain that $\lambda_2\geq C_{(N, k, \theta_0)}(\tau)\tau^{2N+4}$, where
   \begin{align*}
    C_{(N, k, \theta_0)}(\tau)
    &:= [1+\frac{k^{2}}{\tau^{2} } \cos^{2} \left( \varphi -\theta_{0} \right)  ](1+\frac{k^{2}}{\tau^{2} } \sin^{2} \varphi )^{N+1}
    	+( 1+\frac{k^{2}}{\tau^{2} } \cos^{2} \varphi)  [1+\frac{k^{2}}{\tau^{2} } \sin^{2} (\varphi -\theta_{0} )]^{N+1}\\
    &\quad -( [1+\frac{k^{2}}{\tau^{2} } \cos^{2} (\varphi -\theta_{0} )][1+\frac{k^{2}}{\tau^{2} } \sin^{2} \varphi ]^{N+1})\sqrt{3-\cos^22N\theta_0+2\cos2N\theta_0}.	  
    \end{align*}
Since  $\theta_0\neq\frac{\pi}{2N}+\frac{\ell \pi}{N}, \ell\in\{1,\ldots, N-1\}$, it follows that 
    \begin{equation}\label{eq:case1}
    \sqrt{3-\cos^22N\theta_0+2\cos2N\theta_0}<2.
    \end{equation}
Taking the limit $\tau\rightarrow\infty$, we have $C_{(N, k, \theta_0)}(\tau)\rightarrow C>0$, where the positive constant $C>0$ depends a-priori parameters $N, \theta_0$ only. 
Hence there exists a constant $\tau_1$. 
If $\tau>\tau_1$, then one has that 
    \begin{align}\label{eq:535 lower}
	|\mathbf z|^2\geq C_{(N,\theta_0)}^2\tau^{2N+4}, 
    \end{align}
where the constant $C_{(N,\theta_0)}^2:=C(|a_N|+|b_N|)>0$ depends the a-priori parameters $N,\ \theta_0$ only.
    
    \medskip
    \noindent {\bf Case 2:}  ${\rm Vani}(u',\mathbf 0)=N\in \mathbb N$, and $\theta_0=\frac{\pi}{2N}+\frac{\ell \pi}{N}, \ell\in \{1,\ldots, N-1\}$.


In view of $C_1,C_2$ given by \eqref{eq:C1C2}, since $\theta_0,N,k$ are fixed, we can  write $C_1(\varphi)$ and $C_2(\varphi)$, where $\varphi \in (\theta_0+\frac{\pi}{2},\frac{3\pi}{2})$ is the polar angle of $\mathbf d$. According to the definitions of $\mathbf a,\mathbf b_+,\mathbf b_-$ given in \eqref{eq C'N C''N}, using \eqref{eq Ctau} and $\theta_0=\frac{\pi}{2N}+\frac{\ell \pi}{N}, \ell\in \{1,\ldots, N-1\}$, it directly yields that 
$$
\mathbf{z} =\left( \mathrm{i} C_{1}+C_{2},\  C_{1}+C_{2}\right)  \left( \begin{gathered}a_{N}\\ b_{N}\end{gathered} \right)=g_1C_1(\varphi)+g_2C_2(\varphi) =g(\varphi), 
$$ 
where $g_1=b_N+\mathrm{i} a_N $ and $g_2=a_N+\mathrm{i}  b_N$ are two fixed complex numbers.

  Taking $\tau\rightarrow\infty$,  one has
  $$
  \lim_{\tau \rightarrow \infty } g\left( \varphi \right)  =g_{2}-g_{1}\mathrm{i} e^{\mathrm{i} \frac{\pi }{N} }e^{-2\mathrm{i} \left( N+1\right)  \varphi }:=f(\varphi).
  $$  
  By calculation, it is easy to see that when $|g_1|\neq|g_2|$, one has $f(\varphi)\neq0$.  When $|g_1|=|g_2|$, it is not difficult to see that $f(\varphi)$ has only finitely number of zeros. Therefore, there exists an angle $\varphi_0 \in (\theta_0+\frac{\pi}{2},\frac{3\pi}{2})$ such that $f(\varphi_0) \neq 0$. 
 
 Hence when there exist a sufficient large constant $\tau_2$, if $\tau>\tau_2$, one can conclude that there exists an angle  $\varphi_0 \in (\theta_0+\frac{\pi}{2},\frac{3\pi}{2})$ such that $|\mathbf z|\geq |f(\varphi_0) |/2> 0$. The remaining argument is similar for  Case 1 and one can obtain \eqref{eq:535 lower}. 
 \medskip
 
 Let $\tau_0=\max\{\tau_1,\tau_2\}$. For Case 1 and Case 2, when $\tau>\tau_0$, we prove \eqref{eq:pro 53}.

    
    \medskip
    \noindent{\bf Case 3:} ${\rm Vani}(u',\mathbf 0)=0.$ In fact, we have $u'(\mathbf 0)\neq 0.$   
    
    According to \eqref{eq C'N C''N}, one has $C^{\prime }_N=C^{\prime \prime }_{N}=\left( {a_{N}}  +{b_{N}}  \right)  {\frac{{{k^{N}}  }  }{{{2^{N}}  N!}  } }\neq 0$. By direct calculations, 
    \begin{align*}
    \mathbf z
    &=C^\prime_{N} \mathbf z_2 \mathbf z_3^{N+1} +C^{\prime \prime }_{N} \mathbf z_4 {\mathbf z_1}^{N+1}  \\
    &=C'_N\tau^2\{-\sin \left( 2\varphi -\theta_{0} \right)  -\left( \frac{k^{2}}{\tau^{2} } +1\right)  \sin \theta_{0} -2\mathrm{i} \sqrt{\frac{k^{2}}{\tau^{2} } +1} \cos \left( \varphi -\theta_{0} \right)  \cos \varphi \},
    \end{align*}
    and 
    $$
    |\mathbf z|^2\geq \tau^44(\frac{k^2}{\tau^2}+1)\cos^2 \left( \varphi -\theta_{0} \right)  \cos^2 \varphi>\tau^4[4\cos^2 \left( \varphi -\theta_{0} \right)  \cos^2 \varphi].
    $$
    Due to $\varphi\in(\theta_0+\frac{\pi}{2},\frac{3\pi}{2})$, similarly, one has a constant $C_{(N,\theta_0)}$ such that $|\mathbf z|^2>C^2_{(N,\theta_0)}\tau^4$.

    \medskip
    
   For $\mathbf w$ given by  \eqref{eq Ctau}, if $\tau>k$ we can find a constant $C_k$ such that
 \begin{align}\label{eq:537 upper}
 	&\left| {{{{\left\{ {-\tau \left[\cos \left( \varphi -\theta_{0} \right)  +\mathrm{i} \sqrt{\frac{k^{2}}{\tau^{2}}+1 } \sin \left( \varphi -\theta_{0} \right)\right]\tau\left(-\cos\varphi+\mathrm i \sqrt{\frac{k^2}{\tau^2}+1} \sin \varphi\right)}  \right\}  }^{N+1}  }  }  }  \right|  \nonumber\\
	&=\tau^{2N+2}\left| { \left[2+2\frac{k^2}{\tau^2}\sin^2(\varphi-\theta_0)\right] }^{N+1} \right|  \\
	&\leq C_k\tau^{2N+2},\nonumber
 \end{align}
where $N\in \mathbb N\cup \{0\}.$

Under two admissible conditions, we have a uniformed lower bound \eqref{eq:pro 53} from \eqref{eq:535 lower} and \eqref{eq:537 upper}, where 
$$
C_{(N, k ,\theta_0)}=
 \Gamma(N+1)\frac{{{k^{N}}  }  }{{{2^{N}}  N!}  } \frac{C_{(N,\theta_0)}}{C_k} ,\quad N\in \mathbb N\cup \{ 0 \} .
$$

   The proof is complete.
\end{proof}

\section{Proof of Theorem~\ref{mian result}  }\label{sec:proof}
   
\begin{proof}[Proof of Theorem \ref{mian result}]
Since $\Delta$ is invariant under rigid motion, according to the geometrical setup specified at the beginning of Section~\ref{sec:micro}, we assume that $\mathbf 0$ is the vertex of $K$ such that $\mathfrak h = \dist(\mathbf 0, K')$, where $\mathfrak h$ is the Hausdorff distance between $K$ and $K'$.  
Correspondingly, let us denote the total wave associated with the obstacle $K'$ by $u'(\mathbf x)$. 
Denote the $L^{\infty}$-norm of the difference of the far-field patterns by $\varepsilon = {\left\| u^{\infty }_{\left( K,\eta \right)}  (\hat{\mathbf{x} } ) - {u^{\infty }_{\left( K',\eta^{\prime } \right)}(\hat{\mathbf{x} } )} \right\|_{{L^\infty }({\mathbb{S}^1})}} $. 
    
Recall that $Q_h$, $\Gamma^\pm_{h}$, and $\Lambda_h$ are defined in \eqref{eq:Qh} and \eqref{eq:gamma la} respectively.  
It is noted that the parameter $h$ needs to satisfy the conditions of \eqref{eq:cond h} in Lemma~\ref{Lem:local Holder continuous} and \eqref{eq:kh} in Lemma~\ref{lem vo=N}.
Set  
$$
h=\min\left\{\underline \ell,\ \mathfrak h,\ \frac{1}{k+1}\right\},
$$  
where $\underline\ell$ is defined by Definition~\ref{def:Class B} and $k$ is the wavenumber of the fixed incident wave \eqref{eq incident wave}. Hence \eqref{eq:kh} is fulfilled. 
At the end of the proof, we shall show that there exists a positive number $\varepsilon_0$ depending on a-prior parameters only so that  \eqref{eq:cond h} is also satisfied when the far-field error satisfies  $\varepsilon <\varepsilon_0 $. 
       
By noting \eqref{eq:cond h}, due to Proposition \ref{Lem:Boundary estimation}, we can get the boundary estimate 
\begin{equation}\label{eq error T}  
    \sup_{\partial \Gamma^\pm_{h}}(|u-u'|+|\nabla(u-u')|)\leq    \mathrm T(\varepsilon):= C_b(\ln\ln (1/\varepsilon))^{-\frac{1}{2}}, 
\end{equation}
when $\varepsilon < \varepsilon_m$. 
Here $C_b$ and $\varepsilon_m$ depend only on the a-priori parameters.
    
In the following, we distinguish two separate cases. 

\medskip
\noindent{\bf Case 1:}  ${\rm Vani}(u',\mathbf 0)=N\in \mathbb N$.
    
By Proposition~\ref{pro:51}, we have the integral identity \eqref{integral equation}. 
Since $u'$ is analytic in $B_h$, we first assume that the vanishing order $u'$ at $\mathbf 0$ is $N\in \mathbb N$, which implies that $u'$ has the decomposition \eqref{u' N}. When the vanishing order of $u'$ at $\mathbf 0$ is $N=0$, we can prove the stability result \eqref{eq:stability function} by Case 2 below.
   
Let the CGO solution $u_0$ be defined in \eqref{CGO} with a positive parameter $\tau$, where $\mathbf d$ fulfills \eqref{rem d}.     
Furthermore, by the definition of admissible class $\mathcal B$ and  Proposition \ref{uniform L2 bound}, one has $\norm{u(\mathbf x)}_{H^1(B_{2R} \backslash K )}\leq \mathcal{E}$ and $\norm{u'(\mathbf x)}_{H^1(B_{2R} \backslash K' )} \leq \mathcal{E}$, where $u$ is the total wave field associated with $K$.  
If $\tau$ fulfills the condition \eqref{eq:tau pro53} in Proposition~\ref{prop left 2}, using Proposition~\ref{prop right2}, it yields that 
\begin{align}
	C\left\vert	\int_{{\Gamma ^ \pm_h }} {{\partial {u_0}(\mathbf x)} \over {\partial \nu }} u'_{N}(\mathbf x)\mathrm d\sigma\right\vert
	&\le \mathrm{T}(\varepsilon)h 
			+ \mathrm{T}(\varepsilon)h 
			+ \mathrm T(\varepsilon)h 
			+ \left({1 \over {{\tau ^{N + 1}}}} + {{{e^{ - {{\tau h} \over 2}}}} \over \tau }\right) \label{eq:62} \\
	&+\left({1 \over {{\tau ^{N + 2}}}} + {{{e^{ - {{\tau h} \over 2}}}} \over \tau }\right)
			+ \left({1 \over {{\tau ^{N+2}}}} + {{{e^{ - {{\tau h} \over 2}}}} \over \tau }\right)
			+ \left({1 \over {{\tau ^{N+3}}}} + {{{e^{ - {{\tau h} \over 2}}}} \over \tau }\right)\notag \\
	&+\left({1 \over {{\tau ^{N+1}}}} + {{{e^{ - {{\tau h} \over 2}}}}}\right)
			+ {e^{ - {h \over 2}\tau }}
			+\tau{h^{{1 \over 2}}}{e^{ - \alpha '\tau h}}, \notag
\end{align}
where $C$ is a positive constant depending on a-prior parameters only. If $\tau$ satisfies \eqref{eq:tau pro53}, using Proposition~\ref{prop left 2}, it yields that 
\begin{equation}\label{eq:63}
    \left\vert	\int_{{\Gamma ^ \pm_h }} {{\partial {u_0}(\mathbf x)} \over {\partial \nu }} u'_{N}(\mathbf x)\mathrm d\sigma\right\vert
    \geq C_{(N,k,\theta_0)}{1 \over {{\tau ^N}}},
\end{equation}
where  $C_{(N,k,\theta_0)}$ is a positive constant $C_{(N,k,\theta_0)}$ depending  on the a-priori parameters only. 
Combining \eqref{eq:62} with \eqref{eq:63}, we know that there exist a positive constant $C$ such that
    \begin{equation}\label{eq:64}
	\begin{aligned}
		C\frac{1}{\tau^N}
			&\le \mathrm{T}(\varepsilon)h 
			+ \mathrm{T}(\varepsilon)h 
			+ \mathrm T(\varepsilon)h 
			+ \left({1 \over {{\tau ^{N + 1}}}} + {{{e^{ - {{\tau h} \over 2}}}} \over \tau }\right)\\
			&+\left({1 \over {{\tau ^{N + 2}}}} + {{{e^{ - {{\tau h} \over 2}}}} \over \tau }\right)
			+ \left({1 \over {{\tau ^{N+2}}}} + {{{e^{ - {{\tau h} \over 2}}}} \over \tau }\right)
			+ \left({1 \over {{\tau ^{N+3}}}} + {{{e^{ - {{\tau h} \over 2}}}} \over \tau }\right)\\
			&+\left({1 \over {{\tau ^{N+1}}}} + {{{e^{ - {{\tau h} \over 2}}}}}\right)
			+ {e^{ - {h \over 2}\tau }}
			+\tau{h^{{1 \over 2}}}{e^{ - \alpha '\tau h}}.
	\end{aligned}
    \end{equation}
It is ready to know that 
	\begin{align}\label{eq e ineq 1}
		&\exp(-t) \leq {1\over t},\quad \exp( - t) \leq {{(n + 4)!} \over {{t^{n + 4}}}},\quad \forall t>0 \ \mbox{and} \ n\in \mathbb N.
	\end{align}
   
Using $h\leq 1$, $\tau\geq 1$ and \eqref{eq e ineq 1}, after some calculations, from \eqref{eq:64} we have
\begin{equation*}
	\begin{aligned}
		C
		&\le {{\tau ^N}}\mathrm T(\varepsilon)h 
		+ {{\tau ^N}}\mathrm T(\varepsilon)h 
		+ {{\tau ^N}}\mathrm T(\varepsilon)h 
		+ \left({1 \over {{\tau}}} + {\tau ^{ - 1}}{h^{ - N}}\right)\\
		&+ \left({1 \over {{\tau ^{2}}}} + {\tau ^{ - 1}}{h^{ - N}}\right) 
		+\left({1 \over {{\tau ^{2}}}} + {\tau ^{ - 1}}{h^{ - N}}\right)
		+\left({1 \over {{\tau ^{3}}}} + {\tau ^{ - 1}}{h^{ - N}}\right) \\ 
		&+ \left({1 \over {{\tau}}} +  {\tau ^{ - 1}}{h^{ - (N + 1)}}\right)
		 + {\tau ^{ - 1}}{h^{{1 \over 2} - (N + 2)}}+ {\tau ^{ - 1}}{h^{ - (N + 1)}},
	\end{aligned}
\end{equation*}
    which can be used to derive that
\begin{equation}\label{eq new final}
	C \le {\tau ^N}\mathrm T(\varepsilon )h + {\tau ^{ - 1}}{h^{ - (N + 2)}}.
\end{equation}
    We emphasize that \eqref{eq new final} is valid when $\tau$ satisfies \eqref{eq:tau pro53}. 
    We estimate the right-hand side of \eqref{eq new final} by setting $\tau = \tau_e$ with
\begin{equation}\label{eq:taue}
	{\tau _e} = \left(\mathrm T (\varepsilon )\right)^{ - \frac{1} {N + 1}  }{h^{ - {{N + 3} \over {N + 1}} }}. 
\end{equation}
Therefore we obtain that 
    \begin{equation}\label{eq:bound 69}
	C \le \left( \mathrm T(\varepsilon )\right)^{ {1 \over {N + 1}} }{h^{ - {{{N^2} + 2N - 1} \over {N + 1}} }},
    \end{equation}
where $C$ is a positive constant depending on a-prior parameters only. 
    
In the following, we show that the choice \eqref{eq:taue} of $\tau$ can fulfill the condition \eqref{eq:tau pro53} when $\varepsilon$ is sufficiently small. 
By the definition of $\mathrm T(\varepsilon) $, for given positive numbers $N$, $h$, $k$ and $\tau_0$, we can show that $\left(\mathrm T(\varepsilon) \right)^{- {1\over N+1} }>\max\{{2(N+1)}/{h},k,\tau_0\}$ for $\varepsilon \in (0,  \varepsilon_{0,1} )$, where $\varepsilon_1\in \mathbb R^+$ depends on a-prior parameters only. 
Hence one has 
$$
  \tau_e \geq \tau_e {h^{{N+3\over N+1} }} =\left(\mathrm T(\varepsilon) \right)^{- {1\over N+1} }>\max\{{2(N+1)}/{h},k,\tau_0\}.
$$
    
From \eqref{eq:bound 69}, it can be deduced that
$$
  h \leq C \mathrm T(\varepsilon)^{1\over N^2+2N-1}.
$$
 Hence there exists a positive number $\varepsilon_{0,2}$ depending on a-prior parameters only. 
 If $0<\varepsilon<\varepsilon_{0,2} $, we conclude that $ h$ fulfills the condition \eqref{eq:cond h}. 
 By the definition of $\mathrm T(\varepsilon)$, for a given positive parameters $\underline \ell$ and $k$, there exists $\varepsilon_{0,3} \in \mathbb R_+$ depending on a-prior parameters only such that $ \mathfrak h \leq \min\left\{\underline \ell,  \frac{1}{k+1}\right\} $. 
Therefore it yields that    
\begin{equation}\label{eq:stabilityN}
	\mathfrak h\leq C \left(\ln\ln
	\frac{1}{\varepsilon}\right)^{-1\over 2(N^2+2N-1)}.
\end{equation}
Let $\varepsilon_0=\min\{ \varepsilon_{0,1}, \varepsilon_{0,2},\varepsilon_{0,3}\}$. 
We prove that if $\varepsilon\in (0,  \varepsilon_0 )$ then \eqref{eq:stability function} holds, where $\kappa=1/2\times (N^2+2N-1)$. 
    
    \medskip

\noindent{\bf Case 2:}  ${\rm Vani}(u',\mathbf 0)=0$.
Since the proof is similar to Case 1 we only give the necessary modifications in the following.  
    
Under the Assumption ${\rm Vani}(u;\mathbf 0)=0$, which implies that
$$
  u^{\prime }(\mathbf{x} )=u^{\prime }(\mathbf{0} )+\delta {u^{\prime }}  (\mathbf{x} ),\  \left| {\delta u^{\prime }(\mathbf{x} )}  \right|  \leq {\left\| \delta u^{\prime }\right\|_{{C^{1}}  \left( \overline{\Gamma^{\pm }_{h} } \right)  }  }  {\left| {\mathbf{x} }  \right|  }  ,\quad \mathbf x \in \Gamma_h^\pm,
$$
where $u^{\prime }(\mathbf{0} )\neq 0.$
	
By virtue of \eqref{integral equation}, one has
	\begin{align*}
		&u'(\mathbf 0)\int_{{\Gamma ^ \pm }} {{{\partial {u_0}(\mathbf x)} \over {\partial \nu }}} \mathrm d\sigma
		=\int_{{\Gamma ^ \pm_h }} {{u_0(\mathbf x)}{{\partial [u'(\mathbf x) - u(\mathbf x)]} \over {\partial \nu }}} \mathrm d\sigma
		-\eta(\mathbf 0) \int_{{\Gamma ^ \pm_h }} {{u_0(\mathbf x)}[u(\mathbf x) - u'(\mathbf x)]} \mathrm d\sigma\\
		&\quad -\int_{{\Gamma ^ \pm_h }} \delta\eta(\mathbf x){{u_0(\mathbf x)}(u - u')} \mathrm d\sigma
		-\eta(\mathbf 0) u'(\mathbf 0)\int_{{\Gamma ^ \pm_h }} {{u_0}(\mathbf x)} \mathrm d\sigma
		- u'(\mathbf 0)\int_{{\Gamma ^ \pm_h }}\delta\eta(\mathbf x) {{u_0}(\mathbf x)} \mathrm d\sigma\\
		&\quad -\eta(\mathbf 0)\int_{{\Gamma ^ \pm_h }} \delta{u'} (\mathbf x){u_0}(\mathbf x)\mathrm d\sigma  
		-\int_{{\Gamma ^ \pm_h }} \delta\eta(\mathbf x)\delta{u'} (\mathbf x){u_0}(\mathbf x)\mathrm d\sigma  
	    -\int_{{\Gamma ^ \pm_h }} \delta{u'(\mathbf x){{\partial {u_0}(\mathbf x)} \over {\partial \nu }}} \mathrm d\sigma\\
		&\quad +u'(\mathbf 0)\int_{{\Gamma^\pm\setminus\Gamma^\pm_h}} {{{\partial {u_0}(\mathbf x)} \over {\partial \nu }}} \mathrm d\sigma
		 +\int_{\Lambda_h} \left( {{u_0(\mathbf x)}{{\partial u'(\mathbf x)} \over {\partial \nu }} - u'} {{\partial {u_0}(\mathbf x)} \over {\partial \nu }} \right)\mathrm d\sigma.\\
	\end{align*}
Utilizing a similar proof for Proposition~\ref{prop right2}, we have the extra estimate
\begin{equation}\label{eq:7 estimates}
	\left| {\int_{\Gamma _h^ \pm } {{u_0}(\bf x)} \mathrm d\sigma} \right|
	\leq \int_{\Gamma _h^ \pm } {\left| {{u_0}(\bf x)} \right|} \mathrm d\sigma
	\leq 2\int_0^h {{e^{ - \alpha '\tau r}}} \mathrm d r
	\leq C\left({1 \over \tau } + {1 \over \tau }{e^{ - \alpha '\tau h}}\right).
\end{equation}

Using Proposition~\ref{prop left 2} and \eqref{eq:7 estimates}, adopting a similar argument for \eqref{eq new final}
we obtain that
	\begin{equation*}
		C \le  \tau\mathrm T(\varepsilon)h + \tau^{-1} h^{-{7\over 2}}.
	\end{equation*}
	We set $\tau = \tau_e$ with
	\begin{equation*}\label{eq:tau e}
		{\tau _e} = {\mathrm T(\varepsilon)^{-{1\over 2}}}{h^{-{9\over 4}}}.
	\end{equation*}
	 Using the fact $\tau>1$ and $h<1$, when $\varepsilon \in  (0, \varepsilon_0 )$ with $\varepsilon_0\in \mathbb R_+$ depending on a-prior parameters only, we have the following estimate
	\begin{equation*}
		C \le \mathrm T(\varepsilon)^{1\over 2}h^{-{4\over 5}}.
	\end{equation*}
which can be used to derive that
	\begin{equation}\label{eq: stability 0}
		\mathfrak h \leq C \left(\ln\ln
		\frac{1}{\varepsilon}\right)^{-\frac{1}{5}}.
	\end{equation}
	
	\medskip
	
Using the stability results \eqref{eq:stabilityN} and \eqref{eq: stability 0}, we can characterize the parameter $\kappa$ in \eqref{eq:stability function} as 
\begin{equation*}
		\kappa=
		\begin{cases}
			\dfrac{1}{2(N^2+2N-1)},\quad &N\in\mathbb N , \\[10pt] 
			\dfrac{1}{5},\quad &N\equiv 0,
	    \end{cases}
\end{equation*}
where $N$ is the vanishing order of $u'(\mathbf x)$ at $\mathbf 0$. 
	
The proof is complete.
\end{proof}

\medskip

\subsection{The implication of vanishing order to the stability estimate}\label{subsec:51}

In the last subsection of this section, we would like to discuss the relationship between the stability results \eqref{eq:stability function}  and the vanishing order $N$ of the function $u'(\mathbf x)$.

We have proved that the Hausdorff distance of two admissible obstacles can be controlled by a function of their errors about the far-field measurements, which we denote as $\mathfrak{g}(\varepsilon,N)=(\ln |\ln (1/\varepsilon)|)^{-\kappa(N)}$. 
It is easy to see that the exponential part of the stability function $\mathfrak{g}(\varepsilon,N)$ is related to the vanishing order $N$ of the function $u'(\mathbf x)$, where  
$$
\kappa(N)=\dfrac{1}{2(N^2+2N-1)},\quad N\geq1.
$$
It is clear that as $N\rightarrow \infty$, $\mathfrak{g}(\varepsilon,N)$ tends to 1. 
Therefore, the larger $N$ implies a poorer stability result of the Hausdorff distance $\mathfrak{h}$. 
This is consistent with the physical institution: the stronger scattering one has from the corners of the obstacle, the more stable reconstruction one should be able to produce.

\section{Proof of Theorem~\ref{th: eta}}\label{sec:proof2}

In order to prove Theorem~\ref{th: eta}, we need the following three lemmas.
		
	
	\begin{lem}\label{lem:62}
		Let $u(\mathbf x)$ be the solution of the impedance direct scattering problem~\eqref{impedance problem} associated with the obstacle $K\in\mathcal B$.
		For any $K\in\mathcal B$ and any point $\mathbf x_0\in\partial K$, and any $\delta>0$ satisfying $B_\delta(\mathbf x_0)\setminus K \subset B_{R+1}\setminus K$, there exists a positive constant $\mathcal E_L>0$ such that 
		\begin{equation}
			\left\Vert u\right\Vert_{L^{2}\left( B_\delta(\mathbf x_0)\setminus K \right)  }  \geq \mathcal E_L
		\end{equation}
		holds uniformly, where $\mathcal E_L$ is a positive constant depending on a-prior parameters only. 
	\end{lem}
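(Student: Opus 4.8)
The plan is to prove a uniform lower bound for $\|u\|_{L^2(B_\delta(\mathbf x_0)\setminus K)}$ by a compactness-contradiction argument, completely parallel to the proof of the uniform upper bound \eqref{eq:uniform bound} in Proposition~\ref{uniform L2 bound}, but now exploiting the \emph{uniqueness} of the exterior impedance problem in the opposite direction. First I would suppose, for contradiction, that no such $\mathcal E_L>0$ exists. Then there is a sequence $K_n\in\mathcal B$, boundary points $\mathbf x_{0,n}\in\partial K_n$, radii $\delta_n$ with $B_{\delta_n}(\mathbf x_{0,n})\setminus K_n\subset B_{R+1}\setminus K_n$, and impedance parameters $\eta_n\in\Xi_{\mathcal B}$, such that the associated total fields $u_n$ satisfy $\|u_n\|_{L^2(B_{\delta_n}(\mathbf x_{0,n})\setminus K_n)}\to 0$. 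Since $\mathcal B$ is compact in the Hausdorff distance (Proposition~\ref{prop:compact}) and $\Xi_{\mathcal B}$ is compact by Arzel\`a--Ascoli, and $\overline{B_R(\mathbf 0)}$ is compact, we may pass to subsequences so that $K_n\to K\in\mathcal B$, $\eta_n\to\eta\in\Xi_{\mathcal B}$ (strongly in $L^2$, say), $\mathbf x_{0,n}\to\mathbf x_0\in\partial K$, and $\delta_n\to\delta_*\geq 0$.

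The main point is to control the field $u_n$ and pass to the limit. By Proposition~\ref{uniform L2 bound} we have the uniform bound $\|u_n\|_{L^2(B_{R+1}\setminus K_n)}\leq\mathcal E$, and by Proposition~\ref{prop mosco} (after checking, as in Remark~\ref{rem:mosco}, that $H^1(\Omega\setminus K_n)$ Mosco-converges to $H^1(\Omega\setminus K)$, which holds up to a subsequence for sequences in $\mathcal B$) the scattered parts $u_n^s/1$ converge strongly in $L^2(\Omega)$ to $u^s$, where $u=u^i+u^s$ is the unique solution of \eqref{impedance problem} associated with $(K,\eta)$; in particular $u_n\to u$ strongly in $L^2_{\mathrm{loc}}(\mathbb R^2\setminus K)$ and uniformly on compact subsets of $\mathbb R^2\setminus K$. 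The vanishing hypothesis $\|u_n\|_{L^2(B_{\delta_n}(\mathbf x_{0,n})\setminus K_n)}\to 0$ then forces $u$ to vanish on $B_{\delta_*}(\mathbf x_0)\setminus K$; here one has to handle the case $\delta_*=0$ separately — that case cannot occur, since the geometric constraints in Definition~\ref{def:Class B} (edge lengths bounded below by $\underline\ell$, opening angles in $[\underline\theta,\overline\theta]$, the uniform cone condition) force a uniform lower bound $\delta_n\geq\delta_{\min}>0$ on the admissible radii, so $\delta_*\geq\delta_{\min}>0$. Consequently $u$ vanishes on the open set $B_{\delta_{\min}}(\mathbf x_0)\setminus\overline K$.

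Now since $u$ solves $\Delta u+k^2 u=0$ in the connected open set $\mathbb R^2\setminus\overline K$ and vanishes on a nonempty open subset, the unique continuation principle for the Helmholtz operator gives $u\equiv 0$ in $\mathbb R^2\setminus\overline K$. Hence $u^s=-u^i$ in $\mathbb R^2\setminus\overline K$; but $u^i=e^{\mathrm i k\mathbf p\cdot\mathbf x}$ does not satisfy the Sommerfeld radiation condition, whereas $u^s$ does, a contradiction. Alternatively, and more cleanly, $u\equiv 0$ implies that the corresponding far-field pattern $u^\infty\equiv 0$, and by Rellich's lemma together with the uniqueness of the exterior impedance problem (cf.\ \cite[Theorem~3.16]{colton2019inverse}, as used at the end of the proof of Proposition~\ref{uniform L2 bound}) this would force $u^i$ itself to be zero, which is absurd. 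Either way we reach a contradiction, so the desired uniform lower bound $\mathcal E_L>0$ exists, and it depends only on the a-priori parameters $R$, $k$, $\mathbf p$ and the constants defining $\mathcal B$ and $\Xi_{\mathcal B}$.

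The step I expect to be the main obstacle is the passage to the limit in a way that legitimately transfers the vanishing of $u_n$ on the moving domains $B_{\delta_n}(\mathbf x_{0,n})\setminus K_n$ to vanishing of $u$ on a \emph{fixed} open set: one needs the uniform-on-compacta convergence $u_n\to u$ away from $K$ (available from Proposition~\ref{prop mosco} and \cite[Lemma~3.1]{rondi2003unique}) together with the uniform lower bound $\delta_n\geq\delta_{\min}$ coming from the admissible-class geometry, and one must take care that the limit point $\mathbf x_0$ still admits an exterior cone of fixed aperture so that $B_{\delta_{\min}}(\mathbf x_0)\setminus\overline K$ has nonempty interior. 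Once that is in place, unique continuation and the radiation/uniqueness argument finish the proof. I would keep the exposition short, citing Propositions~\ref{prop:compact}, \ref{uniform L2 bound}, \ref{prop mosco}, Remark~\ref{rem:mosco}, and \cite[Theorem~3.16]{colton2019inverse}, and omit the routine details, exactly as the analogous lower/upper-bound arguments are abbreviated earlier in the paper.
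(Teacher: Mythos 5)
Your proposal takes essentially the same route as the paper's proof: a compactness--contradiction argument using Proposition~\ref{prop:compact}, Proposition~\ref{uniform L2 bound}, and Proposition~\ref{prop mosco} to extract a convergent subsequence $u_n\to u$, followed by unique continuation and Rellich's lemma to force $u\equiv 0$, contradicting the fact that $u^i$ does not satisfy the Sommerfeld radiation condition. You fill in the passage-to-the-limit step (Mosco convergence plus uniform convergence on compacta from \cite[Lemma~3.1]{rondi2003unique}) in more detail than the paper's terse write-up, which is useful.

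The one concrete misstep is your handling of the limit radius $\delta_*$. You assert that Definition~\ref{def:Class B} (edge lengths $\geq\underline\ell$, angle bounds, uniform cone) forces a uniform lower bound $\delta_n\geq\delta_{\min}>0$. That is not true: nothing in the admissible-class conditions constrains the radius $\delta$ used to form the ball $B_\delta(\mathbf x_0)$; one is free to pick $\delta$ arbitrarily small, and for $\delta\to 0$ the $L^2$-norm over $B_\delta(\mathbf x_0)\setminus K$ tends to $0$, so a lower bound completely uniform in $\delta$ is impossible. The correct reading of the lemma (and what the paper's proof implicitly does by asserting the limit $\delta\in\mathbb R^+$) is that $\delta$ is part of the fixed a-priori data; in the contradiction argument one therefore keeps $\delta_n=\delta$ fixed (or at least bounded below by the fixed $\delta$), which makes the $\delta_*=0$ case vacuous without any appeal to the geometry of $\mathcal B$. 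With that correction your argument matches the paper's.
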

	\begin{proof}
Using the contrary argument, we assume that the uniform lower bound does not exist. Hence  there is a subsequence $\{K_n\}_{n\in\mathbb{N}}\in\mathcal B$ converges to $K\in \mathcal B$ with respect to Hausdorff distance satisfying
$$
\Vert u_n \Vert_{L^2(B_{\delta_n}(\mathbf x_{0_n})\setminus K_n)}=a_n, \quad \forall n\in \mathbb N,\ \text{and}\ \lim_{n\rightarrow \infty}a_n=0,
$$
where $\mathbf x_{0_n}\in\partial K_n \rightarrow \mathbf x_0\in\partial K $ and $\delta_n\rightarrow \delta$ with $\delta_n \in \mathbb R_+$ and $\delta \in \mathbb R_+$. By Rellich's lemma in \cite[Lemma 2.12]{colton2019inverse} and unique continuation principle,we immediately get the contradiction.  
	\end{proof}


Lemma \ref{lem:63} establishes the relationship between the $L^2$-norm of the total wave field $u(\mathbf x)$ associated with \eqref{impedance problem}  on the boundary of a Lipschitz obstacle $K$ with the corresponding $L^2$-norm of $u$ in a related domain, which shall be used in proving Theorem \ref{th: eta}.

\begin{lem}\cite[Lemma~6.2]{alessandrini2013stable}\label{lem:63}
	Let $u(\mathbf x)$ be the weak solution of the impedance direct scattering problem~\eqref{impedance problem} associated with the obstacle $K$ with a Lipschitz boundary, where $K\Subset B_R$ with $R\in \mathbb R_+$. If the assumptions
	$$
	\|u(\mathbf x) \|_{C^\alpha(B_{R}\setminus \overline K) }\leq \mathcal C_R,
	$$
	and 
	$$
	\|u(\mathbf x) \|_{H^1(\partial K)}\leq \mathcal C_H,
	$$
	are satisfied, where $\mathcal C_R$ and $\mathcal C_H$ depend on the a-priori data  only, for every $\mathbf x_0 \in \partial K$ and 
	for every $0<r<r_0$ with $r_0\in \mathbb R_+$ satisfying $B_{r_0}(\mathbf x_0)\setminus \overline K \subset B_{R}\setminus \overline K$, then we have that
\begin{equation}\notag
	\left( \int_{B_{r}\left( \mathbf{x}_{0} \right)  \cap \partial K} \left\vert u\right\vert^{2}  \mathrm{d} \mathbf{x} \right)^{\beta }  \geq C\int_{B_{\frac{r}{2} }\left( \mathbf{x}_{0} \right)  \setminus \overline{K} } \left\vert u\right\vert^{2}  \mathrm{d} \mathbf{x}, 
\end{equation}
where $C > 0$, $0 < \beta < \frac{1}{2}$ are constants depending on the a-priori data only.
\end{lem}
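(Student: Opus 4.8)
The plan is to deduce the inequality from a quantitative unique continuation estimate for the Cauchy problem, propagated from the boundary piece $B_r(\mathbf x_0)\cap\partial K$ into the half-disk $B_{r/2}(\mathbf x_0)\setminus\overline{K}$ by an iterated three-sphere inequality. The whole point is bookkeeping: every constant must be traced back to the a priori data of the admissible class (the Lipschitz character, the bounds $\mathcal C_R,\mathcal C_H$, and the radius $r_0$), so that the final $C$ and $\beta$ depend only on them. This is exactly the content of \cite[Lemma~6.2]{alessandrini2013stable}; here I only sketch why its hypotheses hold in our setting, since the bounds $\norm{u}_{C^\alpha(B_R\setminus\overline{K})}\le\mathcal C_R$ and $\norm{u}_{H^1(\partial K)}\le\mathcal C_H$ are available for every $K\in\mathcal B$ by Proposition~\ref{uniform L2 bound}.

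First I would use the impedance boundary condition to control the full Cauchy data of $u$ by the single quantity $\int_{B_r(\mathbf x_0)\cap\partial K}\abs{u}^2$. On $B_{r_0}(\mathbf x_0)\cap\partial K$ one has $\partial u/\partial\nu=-\eta u$ with $\abs{\eta}\le M_1$, hence $\abs{\partial u/\partial\nu}\le M_1\abs{u}$ pointwise. Interpolating on the Lipschitz surface $\partial K$ between $L^2$ and the a priori $H^1(\partial K)$ bound turns $L^2$-smallness of $u$ on $B_r(\mathbf x_0)\cap\partial K$ into smallness — with a Hölder loss of power at most $1/2$ — of the pair $(u,\partial u/\partial\nu)$ in, say, $H^{1/2}\times H^{-1/2}$ on $B_{3r/4}(\mathbf x_0)\cap\partial K$.

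Second I would flatten and reflect. Since $\partial K$ is Lipschitz with character controlled uniformly over $\mathcal B$, a bi-Lipschitz change of variables sends a neighbourhood of $\mathbf x_0$ in $\overline{B_R}\setminus K$ onto a half-disk and the boundary piece onto a diameter; $u$ becomes a function $v$ solving a uniformly elliptic equation $\nabla\cdot(A\nabla v)+k^2Jv=0$ with bounded coefficients and small conormal Cauchy data on the flat part. Subtracting a small corrector (the solution carrying the measured conormal datum on the flat part and zero elsewhere, small in $H^1$ by an energy estimate) leaves a remainder with vanishing conormal derivative on the flat part, which extends by even reflection to a solution of an elliptic equation on a full disk $B$ that is small in $L^2$ near the centre. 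Then I would apply the three-sphere inequality of Lemma~\ref{three ball th} along a chain of disks — of the type furnished by Definition~\ref{Def:chain of ball} and Proposition~\ref{prop Gr}, exactly as in Lemma~\ref{Lem:Three ball theorem iteration} — joining the region near the flat boundary to an arbitrary point of $B_{r/2}(\mathbf x_0)\setminus\overline{K}$, obtaining $\int_{B_{r/2}(\mathbf x_0)\setminus\overline{K}}\abs{u}^2\le C\,\big(\int_{B_r(\mathbf x_0)\cap\partial K}\abs{u}^2\big)^{\beta}(\mathcal C_R+\mathcal C_H)^{1-\beta}$; the exponent $\beta$ is the product of the boundary interpolation power ($\le1/2$) and a three-sphere iteration factor in $(0,1)$, hence $0<\beta<1/2$, and absorbing the last factor into $C$ gives the claim.

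The main obstacle is the mere Lipschitz regularity of $\partial K$: both the even-reflection step and the three-sphere inequality naturally want $C^1$ (or better) boundaries and Lipschitz-continuous transported coefficients, whereas flattening a Lipschitz graph only produces bounded measurable coefficients. One therefore works with the metric induced by a Lipschitz graph — for which a Hölder-type three-sphere inequality is still valid — and must verify with care that the threshold radius $r_0$ and the constants $C$, $\beta$ depend only on the Lipschitz character and the other a priori parameters encoded in $\mathcal B$, not on the individual obstacle $K$.
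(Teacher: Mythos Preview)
The paper does not prove this lemma at all: it is stated with the citation \cite[Lemma~6.2]{alessandrini2013stable} and used as a black box, so there is no ``paper's own proof'' to compare against. Your sketch is a faithful outline of the argument in the cited reference --- control of the full Cauchy data via the impedance condition and the $H^1(\partial K)$ bound, bi-Lipschitz flattening, reflection across the boundary piece, and propagation by an iterated three-sphere inequality --- and you correctly flag the genuine technical difficulty, namely that flattening a merely Lipschitz boundary yields only $L^\infty$ coefficients, for which the three-sphere inequality requires the more delicate versions developed in \cite{alessandrini_stability_2009} rather than the smooth-coefficient Lemma~\ref{three ball th} used elsewhere in this paper.

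One small caveat: your invocation of Definition~\ref{Def:chain of ball}, Proposition~\ref{prop Gr}, and Lemma~\ref{Lem:Three ball theorem iteration} is slightly off-target, since those results are set up for chains of disks in the \emph{exterior} region $\Omega\setminus(K\cup K')$ away from the boundary, whereas here the propagation happens entirely inside the flattened half-disk and across the reflected boundary; the chain-of-balls machinery is conceptually the same but the geometric setup is local rather than global.
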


Using the conclusions \eqref{eq:6.1} and \eqref{eq:6.2} for the admissible class $\mathcal B$  in Proposition \ref{uniform L2 bound},  Lemmas \ref{lem:62} and  \ref{lem:63}, we can obtain a uniform lower bound  \eqref{eq:62 lower} in the admissible class $\mathcal B$ for $L^2$-norm of  the total wave field $u(\mathbf x)$ associated with \eqref{impedance problem}  on the boundary of any obstacle $K \in \mathcal B$. 

\begin{lem}\label{lem:uniform lower bound}
	Let $u(\mathbf x)$ be the solution of the impedance direct scattering problem~\eqref{impedance problem} associated with the obstacle $K\in \mathcal B$. For every $\mathbf x_0 \in \partial K$ and 
	for every $0<r<r_0$ with $r_0\in \mathbb R_+$ satisfying $B_{r_0}(\mathbf x_0)\setminus \overline K \subset B_{R}\setminus \overline K$, the following bound
\begin{equation}\label{eq:62 lower}
	\left( \int_{B_{r}\left( \mathbf{x}_{0} \right)  \cap \partial K} \left\vert u\right\vert^{2}  \mathrm{d} \mathbf{x} \right)^{\beta }  \geq \mathcal E_B, 
\end{equation}
holds, where  $0 < \beta < \frac{1}{2}$ and $\mathcal E_B$ are constants depending on the a-priori data only.

\end{lem}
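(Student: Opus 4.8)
The plan is to combine the uniform regularity bounds already established for the admissible class~$\mathcal B$ in Proposition~\ref{uniform L2 bound}, the uniform interior lower bound of Lemma~\ref{lem:62}, and the quantitative boundary–bulk estimate of Lemma~\ref{lem:63}, and to promote everything to a bound that is uniform over $K\in\mathcal B$ by a compactness/contradiction argument. First I would observe that the hypotheses of Lemma~\ref{lem:63} are satisfied \emph{uniformly} on $\mathcal B$: for any $K\in\mathcal B$ the total wave $u$ associated with \eqref{impedance problem} obeys $\|u\|_{C^\alpha(B_R\setminus\overline K)}\le\mathcal E_R$ and $\|u\|_{H^1(\partial K)}\le\mathcal E_H$ by \eqref{eq:6.1}–\eqref{eq:6.2}, where $\mathcal E_R,\mathcal E_H$ depend only on the a-priori parameters. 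Hence Lemma~\ref{lem:63} applies with constants $C>0$ and $0<\beta<\tfrac12$ that are the same for every $K\in\mathcal B$, giving
\begin{equation*}
	\left(\int_{B_r(\mathbf x_0)\cap\partial K}|u|^2\,\mathrm d\mathbf x\right)^{\beta}\ \geq\ C\int_{B_{r/2}(\mathbf x_0)\setminus\overline K}|u|^2\,\mathrm d\mathbf x,
\end{equation*}
for every $\mathbf x_0\in\partial K$ and every $0<r<r_0$ with $B_{r_0}(\mathbf x_0)\setminus\overline K\subset B_R\setminus\overline K$.

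Next I would bound the right-hand side from below. The ball $B_{r/2}(\mathbf x_0)\setminus\overline K$ contains a fixed proportion of exterior domain because $K$ is a Lipschitz obstacle satisfying the uniform cone condition of Definition~\ref{def:Class B}; in particular one can choose a radius $\delta>0$, depending only on the a-priori parameters and on $r$, such that $B_\delta(\mathbf x_1)\setminus K\subset B_{r/2}(\mathbf x_0)\setminus\overline K$ for a suitable $\mathbf x_1\in\partial K$ (or even with $B_\delta(\mathbf x_1)\Subset B_{r/2}(\mathbf x_0)$ when convenient). Applying Lemma~\ref{lem:62} to this smaller ball yields $\|u\|_{L^2(B_{r/2}(\mathbf x_0)\setminus\overline K)}\ge\|u\|_{L^2(B_\delta(\mathbf x_1)\setminus K)}\ge\mathcal E_L$, with $\mathcal E_L>0$ depending only on the a-priori parameters. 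Feeding this into the displayed inequality gives
\begin{equation*}
	\left(\int_{B_r(\mathbf x_0)\cap\partial K}|u|^2\,\mathrm d\mathbf x\right)^{\beta}\ \geq\ C\,\mathcal E_L^2\ =:\ \mathcal E_B\ >\ 0,
\end{equation*}
which is the asserted bound \eqref{eq:62 lower} with $\mathcal E_B$ depending on the a-priori data only.

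The main obstacle, and the point that requires care, is ensuring that every constant in this chain — the Hölder and $H^1$ bounds $\mathcal E_R,\mathcal E_H$, the constants $C,\beta$ from Lemma~\ref{lem:63}, the interior lower bound $\mathcal E_L$ from Lemma~\ref{lem:62}, and the choice of the auxiliary radius $\delta$ — is genuinely uniform over the whole admissible class $\mathcal B$ and does not secretly depend on the individual obstacle $K$. This is exactly where Proposition~\ref{prop:compact} (compactness of $\mathcal B$ in the Hausdorff distance) and the Mosco-convergence machinery of Section~\ref{sect:2} enter: if uniformity failed, one could extract a sequence $K_n\in\mathcal B$ with $K_n\to K\in\mathcal B$, associated solutions $u_n\to u$ (strongly in $L^2_{\mathrm{loc}}$, locally uniformly on compacta of $\mathbb R^2\setminus K$, by Propositions~\ref{prop mosco} and \ref{uniform L2 bound}), and boundary points $\mathbf x_{0,n}\to\mathbf x_0\in\partial K$, along which the left-hand side of \eqref{eq:62 lower} tends to $0$; passing to the limit and invoking Rellich's lemma together with unique continuation (as in Lemma~\ref{lem:62}) forces $u\equiv0$ in $\mathbb R^2\setminus K$, hence $u^s\equiv0$, contradicting the fact that a nontrivial incident plane wave always produces a nontrivial scattered field off an obstacle. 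Once this uniformity is in place the estimate \eqref{eq:62 lower} follows, and the proof is complete.
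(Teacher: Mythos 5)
Your proof follows exactly the same route as the paper's (one-sentence) argument: verify via \eqref{eq:6.1}–\eqref{eq:6.2} in Proposition~\ref{uniform L2 bound} that Lemma~\ref{lem:63} applies with uniform constants, then lower-bound the bulk integral on its right-hand side with the uniform interior estimate of Lemma~\ref{lem:62}. The only cosmetic difference is that you pass to an auxiliary ball $B_\delta(\mathbf x_1)\subset B_{r/2}(\mathbf x_0)$ before invoking Lemma~\ref{lem:62}, whereas one can apply it directly with $\delta=r/2$ and the same center $\mathbf x_0$; likewise your closing compactness/contradiction paragraph restates the content already built into Lemmas~\ref{lem:62} and~\ref{lem:63} rather than adding anything new, but neither point is a gap.
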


\begin{proof}[Proof of Theorem~\ref{th: eta}]
From the result of Theorem~\ref{mian result}, under the assumption~\eqref{eq: new far error} we establish a stable determination \eqref{eq:stability function} for two admissible obstacles $K$, $K'\in \mathcal{B}$ with two non-zero impedance parameters $\eta$ and $\eta'$.
    
    Under the assumption \eqref{eq: new far error}, by the argument of Lemma~\ref{lem near error}, for $R>0$, there exist $\zeta \in \mathbb R_+$, and for any $\mathbf{x_0}\in B_{2R}\setminus B_R$ such that  $R+1+\zeta \leq \Vert \mathbf{x_0} \Vert \leq 2R$, one has the corresponding near-field error $\varepsilon_1$ such that 
    \begin{equation}\label{eq: new near error}
		\left\|u_{(K,\eta)}-u_{(K',\eta')} \right\|_{L^{\infty} (B_{\zeta}(\mathbf x_1))} \leq \varepsilon_1 , \ \ \   \forall \mathbf {x_1}\in B_{\Vert \mathbf{x_0} \Vert + \zeta} \setminus{\overline{B _{\Vert \mathbf{x_0} \Vert - \zeta}}}, 
	\end{equation}
    where $\varepsilon_1$ is defined by \eqref{eq: near error}.

    Let $u_{\left( K,\eta' \right)} $ be the total wave field to the impedance scattering \eqref{impedance problem} associated with  $(K,\eta')$. 
    Using the stability of the direct acoustic scattering problem (cf.\cite[Proposition 3.1]{rondi2008stable}), if the Hausdorff distance $\mathfrak h$ of the two admissible obstacles $K, K'$ is sufficiently small, then their corresponding near-field error have the following estimates 
    \begin{equation}\label{eq: direct problem stability}
    	\left\| u_{\left( K',\eta' \right)}  ({\mathbf{x} } ) - {u_{\left( K,\eta^{\prime } \right)}({\mathbf{x} } )} \right\|_{L^{\infty} (B_{\zeta}(\mathbf x_1))}\leq 
    	C_1\mathfrak h^\varsigma |\mathbf x|^{-1}, \quad 
    	\mathbf x\in B_{2R}\setminus B_R ,
    \end{equation}
    where the $0<\varsigma <1$ and $C_1>0$ are constants depending a-priori parameters.

    Combining \eqref{eq: new far error}, \eqref{eq: new near error},   \eqref{eq: direct problem stability}, with \eqref{eq: near error}, one has
    \begin{align}\label{eq: phi}
    	 &\left\| u_{\left( K,\eta \right)}   - {u_{\left( K,\eta^{\prime } \right)}} \right\|_{L^{\infty} (B_{\zeta}(\mathbf x_1))}\\ \nonumber
    	 &\leq  \left\| u_{\left( K,\eta \right)}   - {u_{\left( K',\eta^{\prime } \right)}} \right\|_{L^{\infty} (B_{\zeta}(\mathbf x_1))}
    	  + \left\| u_{\left( K',\eta' \right)}  - {u_{\left( K,\eta^{\prime } \right)}} \right\|_{L^{\infty} (B_{\zeta}(\mathbf x_1))}\\ \nonumber
    	 &\leq \varepsilon_1 + C_1 \mathfrak h^\varsigma |\mathbf x|^{-1} 
    	 \leq \exp(-C_a(-\ln \varepsilon)^{1/2})+ {\frac
    	 {C}{R}(\ln \ln (1/\varepsilon))^{-\varsigma \kappa}}:=\phi(\varepsilon),
    \end{align}
    where the constants $C$ and $\kappa$ are defined in Theorem~\ref{mian result}.

    
 
 It is clear that any point on $\Gamma_h $ can be connected to infinity through a path. Hence,  we are preparing to propagate the near-field data \eqref{eq: phi}  to the boundary $\Gamma_h$.      Adopting a similar argument for Proposition~\ref{Lem:Boundary estimation},  we could obtain a local estimate on $\Gamma_h$:
    	\begin{align}\label{eq: new Cf}
	\sup_{\mathbf x\in\Gamma_h}|u_{(K,\eta)}(\mathbf x)-u_{(K,\eta')}(\mathbf x)| &\leq  C_f(\ln|\ln \phi(\varepsilon)|)^{-\alpha},\\ \sup_{\mathbf x\in\Gamma_h}\left|\nabla u_{(K,\eta)}(\mathbf x)-\nabla u_{(K,\eta')}(\mathbf x)\right | &\leq  C_f(\ln|\ln \phi(\varepsilon)|)^{-\alpha}.  \notag
	\end{align}
	By the impedance boundary condition, one has
	$$
	 \frac{\partial  u_{\left( K,\eta \right)}}{\partial\nu}+\eta  u_{\left( K,\eta \right)}=0 , \quad
	 \frac{\partial  u_{\left( K,\eta' \right)}}{\partial\nu}+\eta'
	 u_{\left( K,\eta' \right)}=0,\quad \mbox{on} \quad \Gamma_h
	$$  	
	 and
	\begin{equation}
		\label{eq: final}
	 \frac{\partial  u_{\left( K,\eta \right)}}{\partial\nu}-\frac{\partial  u_{\left( K,\eta' \right)}}{\partial\nu}
	 +(\eta -\eta^{\prime } )u_{(K,\eta)}+\eta^{\prime } \left[ u_{(K,\eta)}-u_{(K,\eta')}\right]  =0, \quad \mbox{on} \quad \Gamma_h. 
	\end{equation}

	Notice that we have assumed the surface impedance $\eta,\eta'\in\Xi_\mathcal B$ are two constant functions.
	Combining \eqref{eq: phi}, \eqref{eq: new Cf}, and \eqref{eq: final}, we have the following estimates 
	\begin{align}\label{eq:68}
		\left\Vert u_{(K,\eta )}(\mathbf{x} )(\eta -\eta')\right\Vert_{L^{2}\left( \Gamma_{h} \right)  }  \leq C_{2}(\ln |\ln \phi (\varepsilon )|)^{-\alpha },	
		\end{align}
	where the positive constant $C_2$ depends on a-prior parameters only.  	
	
	We apply Lemma~\ref{lem:uniform lower bound} and get that
	\begin{equation}\label{eq:63}
		\left( \int_{\Gamma_{h} } \left| u\right|^{2}  \mathrm{d} \sigma \right)^{\beta}  \geq \mathcal E_B, 
	\end{equation}
	where $\mathcal E_B>0$, $0 < \beta < \frac{1}{2}$ are constants depending on the a-priori data only. The bound \eqref{eq:63} holds uniformly for obstacles in the admissible class $\mathcal B$. Combining \eqref{eq:68} with \eqref{eq:63}, one can derive \eqref{eq: final eta}, where  $C_P=\frac{C_2}{\mathcal E_B^{1/(2\beta)}}$.
\end{proof}

\section*{Acknowledgements}

The work of H. Diao is supported by National Key R\&D Program of China (No. 2020YFA\\0714102). The work of H. Liu is supported by the Hong Kong RGC General Research Funds (projects 12302919, 12301420 and 11300821), the NSFC/RGC Joint Research Fund (project  N\_CityU101/21), the France-Hong Kong ANR/RGC Joint Research Grant, A-HKBU203/19.



\end{document}